\newcommand{\Z}{{\mathbb Z}}
\newcommand{\D}{{\widetilde{D}_{n+2}}}
\newcommand{\B}{{\widetilde{B}_{n+1}}}
\newcommand{\Di}{{\mathbb D}}
\newcommand{\ld}{{\mathrm{D}_{\mathrm{L}}}}
\newcommand{\rd}{{\mathrm{D}_{\mathrm{R}}}}
\newcommand{\tl}{{\mathrm{TL}}}
\newcommand{\fc}{{\mathrm{FC}}}
\newcommand{\bO}{{\mathcal{L}_{\bullet}}}
\newcommand{\wO}{{\mathcal{L}_{\circ}}}
\newcommand{\bwO}{{\mathcal{L}_{\bullet}^{\circ}}}
\newcommand{\s}{{s_{\bullet}}}
\newcommand{\ti}{{t_{\bullet}}}
\newcommand{\sn}{{s_\circ}}
\newcommand{\tn}{{t_\circ}}
\newcommand{\TLR}{T^{LR}_k(\Omega)}
\newcommand{\PLR}{{\mathcal{P}^{LR}_k(\Omega)}}
\newcommand{\PLRk}{\widehat{\mathcal{P}}^{LR}_{k}(\Omega)}
\newcommand{\PLRn}{\widehat{\mathcal{P}}^{LR}_{n+2}(\Omega)}
\newcommand{\su}{\mathrm{Suff}}
\newcommand{\pr}{\mathrm{Pref}}
\newcommand{\TTC}{\mathrm{CC}}
\newcommand{\TTCw}{\mathrm{CC_w}}
\newcommand{\zz}{\mathrm{CZ}}
\newcommand{\car}{\mathrm{K}}
\newcommand{\carw}{\mathrm{K_w}}
\newcommand{\IR}{\mathrm{I_w}}
\newcommand{\IRc}{\mathrm{I_w'}}
\newcommand{\I}{\mathrm{I}}
\newcommand{\nmap}{\varphi}
\newcommand{\fzp}{A}
\newcommand{\fzs}{B}
\newcommand{\red}{\rightsquigarrow}
\newcommand{\Zd}{\mathbb{Z}[\delta]}
\newcommand{\ab}{\mathbf{a}}
\newcommand{\at}{\widetilde{\mathbf{a}}}
\newtheorem{Theorem}{Theorem}[section]
\newtheorem{Lemma}[Theorem]{Lemma}
\newtheorem{Proposition}[Theorem]{Proposition}
\newtheorem{Corollary}[Theorem]{Corollary}
\theoremstyle{definition}
\newtheorem{Definition}[Theorem]{Definition}
\newtheorem{Remark}[Theorem]{Remark}
\newtheorem{Example}[Theorem]{Example}
\newtheorem{Notation}[Theorem]{Notation}
\title[Star and weak star irreducible FC elements]{Star and weak star irreducible fully commutative elements in Coxeter groups of affine types $\widetilde{B}$ and $\widetilde{D}$}
\author[R. Biagioli]{Riccardo~Biagioli}
\author[L. Costantini]{Luca~Costantini}
\author[E. Sasso]{Elisa Sasso}
\address{Riccardo Biagioli, Luca Costantini, Elisa Sasso: Dipartimento di Matematica, Università di Bologna\\ Piazza di Porta San Donato 5, 40126 Bologna, Italy}
\email{riccardo.biagioli2@unibo.it, luca.costantini5@studio.unibo.it, elisa.sasso2@unibo.it}
\begin{document}

\begin{abstract}

The star operation, originally introduced by Kazhdan and Lusztig, was later specialized by Ernst to the so-called weak star reduction on the set of fully commutative elements of a Coxeter group. In this paper, we classify the star and weak star irreducible fully commutative elements in Coxeter groups of affine types $\B$ and $\D$. Focusing then on the case of type $\D$, we use the classification of star irreducible elements to provide a new proof of the faithfulness of a diagrammatic representation of the corresponding generalized Temperley–Lieb algebra, along with an explicit description of Lusztig’s 
$\mathbf{a}$-function.

\end{abstract}

\maketitle

\section*{Introduction}

The notion of the star operation was first introduced by Kazhdan and Lusztig in \cite{kazhdan_reps} for simply laced Coxeter systems and later extended to arbitrary Coxeter systems in \cite{lusztig_cells}. Star operations provide a powerful tool to study the decomposition of a Coxeter group into Kazhdan–Lusztig cells. These cells play a fundamental role in representation theory. 
\smallskip

In this paper, we consider a special case of the star operation that strictly decreases the length of the element to which it is applied \cite{GreenStar}, and  following \cite{ernst}, we focus on the star and weak star reducibility of fully commutative elements. Over the years, star irreducible elements have been classified. Green \cite{GreenStar} defined a Coxeter group to be \textit{star reducible} if and only if all of its star irreducible elements are products of commuting generators. In particular, he proved \cite[Theorem 6.3]{GreenStar} that finite types $A,B,D,E,F,H,I$ and affine types $\widetilde{A}_n$ (with $n$ even), $\widetilde{C}_n$ (with $n$ odd), $\widetilde{E}_6$, and $\widetilde{F}_5$ are star reducible Coxeter systems. 
In \cite{ernst}, Ernst relaxed this notion by introducing  \textit{weak star reducibility}  (also referred to as \textit{cancellability}, following Fan \cite{Fan}), which coincides with star reducibility in the simply laced cases. He classified the weak star irreducible (or non-cancellable) elements of Coxeter systems of types $B$ and $\widetilde{C}$ in \cite[Theorem 4.2.1, Theorem 5.1.1]{ernst}. 
\smallskip

In this paper, we classify the star irreducible and weak star irreducible elements for the affine types $\D$ and $\B$. Beyond their intrinsic interest as tools in the representation theory of Hecke algebras, these (weak) star irreducible elements enable us to give a new proof of the faithfulness of the diagrammatic representation of the Temperley–Lieb algebra of type $\D$ defined by the first and third authors in \cite{BFS}. While this result was originally established using topological and combinatorial arguments based on diagram manipulations, our proof, motivated by valuable suggestions from the anonymous referee of \cite{BFS}, takes a more algebraic approach that relies crucially on star irreducibility. Finally, we present a method to compute Lusztig’s $\ab$-function \cite{lusztig_cells} for fully commutative elements in terms of their associated decorated diagrams.
\smallskip

This paper is organized as follows.  Section \ref{se1} provides a brief review of essential facts about Coxeter groups and the \textit{Cartier--Foata normal form}, which will be used extensively throughout the paper. We also introduce a particular partial order, known as \textit{heap} of an element, whose Hasse diagram helps to better visualize the structure of (weak) star irreducible elements.
In Section \ref{se2}, we classify the star irreducible elements of type $\D$ (Theorem \ref{decimo}). 
In Section \ref{se3}, building on this classification and employing an injective map between the sets of fully commutative elements of types $\D$ and $\B$ (Definition \ref{def:mu}), we also characterize the star irreducible elements (Theorem \ref{theorem:classredB}) and weak star irreducible elements (Theorem \ref{undicesimo}) of type $\B$. In Sections \ref{sec:diagrams} and \ref{sec:descents}, we recall the definition of a family of decorated diagrams, introduced in \cite{BFS}, and study their properties in relation to the irreducible elements of type $\D$. 
In Section \ref{sec:injectivity}, we provide an alternative proof of the faithfulness of the diagrammatic representation of $\tl(\D)$ (Theorem \ref{lastresult}) arising from such decorated diagrams. In the final section, we give a characterization of Lusztig's $\ab$-function in terms of parameters on the decorated diagrams of type $\D$ (Theorem \ref{theorem:a-lusztig}).

\section{Preliminaries} \label{se1}

\subsection{Coxeter groups and fully commutative elements}
Let $M$ be a square symmetric matrix indexed by a finite set $S$, satisfying  $m_{s,t}=m_{t,s}\in \mathbb{N}_{>0}\cup \{\infty\}$ and $m_{s,t}=1$ if and only if $s=t$, for all $s,t\in S$. The \textit{Coxeter group} $W$ associated with the \textit{Coxeter matrix} $M$ is defined by generators $S$ and relations $(st)^{m_{s,t}}=e$ if $m_{s,t}< \infty$. These relations can be rewritten as $s^2=e$ for all $s\in S$, and \begin{equation}\label{eq:mst}    
[st]_{m_{s,t}}:=\underbrace{sts\cdots}_{m_{s,t}}=\underbrace{tst\cdots}_{m_{s,t}}=[ts]_{m_{s,t}}\end{equation} for all $s,t\in S$ such that $2\le m_{s,t}< \infty$, the latter being called \textit{braid relations}. When $m_{s,t}=2$, they are simply \textit{commutation relations} $st=ts$. 

The \textit{Coxeter graph} $\Gamma$ associated with a Coxeter system $(W, S)$ is the labeled graph whose vertex set is $S$, with an edge between distinct elements $s, t \in S$ whenever $m_{s,t} \ge 3$. The edge is labeled with $m_{s,t}$ if $m_{s,t} \ge 4$. Therefore non adjacent vertices correspond precisely to commuting generators. We say that $(W,S)$ is \emph{irreducible}, when the associated Coxeter graph $\Gamma$ is connected. 

\begin{figure}[h!]
    \centering
    \includegraphics[width=0.7\linewidth]{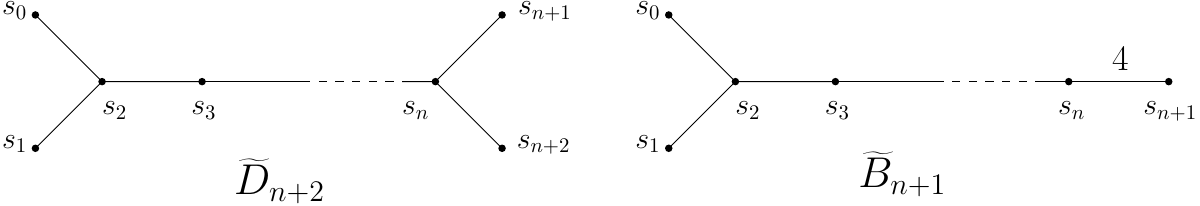}
    \caption{Coxeter graphs of type $\D$ and $\B$.}
    \label{coxgra}
\end{figure}
  
In this section, we fix $(W,S)$ to be a Coxeter system. The \textit{length} function is $\ell:W\rightarrow \mathbb{N}$, is defined as follows: $\ell(e)=0$ and for every $e\ne w\in W$
\[\ell(w):=min\{r\in \mathbb{N}\mid w=s_{i_i}\cdots s_{i_{r}}\mbox{, }s_{i_k}\in S\}.\]

An expression of $w \in W$ of minimal length is called \textit{reduced}. We denote by $\mathcal{R}(w)$ the set of all reduced expressions of $w$. We use boldface to indicate a particular reduced expression.

We define the \textit{left descents set} and the \textit{right descents set} of $w\in W$ respectively by 
\[\ld(w):=\{s\in S\mid \ell(sw)<\ell(w)\} \mbox{ and }\rd(w):=\{s\in S\mid \ell(ws)<\ell(w)\}.\]

We have that $s\in \ld(w)$ ($s\in \rd(w)$) if and only if exists a reduced expression for $w$ such that starts with $s$ (finishes with $s$). For more details see \cite[\S 1.4]{b07}.

Let $u, v, w \in W$. We say that $w = uv$ is a \textit{reduced product}  if $\ell(w) = \ell(u) + \ell(v)$. In this case, $u$ is called a  \textit{prefix} of $w$, and $v$ is called a \textit{suffix} of $w$. In general, $u$ is a \textit{factor} of $w$ if there exist $x, y \in W$ such that $w = xuy$ and $\ell(w) = \ell(x) + \ell(u) + \ell(y)$.
 We denote by $\pr(w)$ (respectively, $\su(w)$) the set of prefixes (respectively, suffixes)  of $w$.

A fundamental result in Coxeter group theory, sometimes called the \textit{Matsumoto Theorem}, states that any reduced expression of $w$ can be obtained from any other reduced expression of $w$ by using only braid relations. 
\begin{Definition}
\label{fcdefinition}
 We say that $w\in W$ is \textit{fully commutative} (FC) if any reduced expression of $w$ can be obtained from any other reduced expression of $w$ by using only commutation relations.
The set of fully commutative elements in $W$ is denoted by $\fc(W)$.
\end{Definition}

The next proposition, due to Stembridge \cite[Proposition 2.1]{stem}, characterizes the FC elements and it is useful to test whether a given element is FC or not.

\begin{Proposition}
\label{caratterizzazionefc}
Let $w\in W$. Then, $w\in \fc(W)$ if and only if for all $s,t\in S$ such that $3\le m_{s,t}< \infty$, there is no reduced expression of $w$ having $[st]_{m_{s,t}}$ as a factor.
\end{Proposition}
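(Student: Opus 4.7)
The plan is to combine Matsumoto's theorem, recalled just above in the text, with a single ``projection'' invariant of commutation classes. The key observation is that, since $m_{s,t}\ge 3$ forbids the equality $st=ts$, no commutation move can ever swap an $s$ past a $t$ in a reduced word. This invariant, encoded as the $\{s,t\}$-projection, will do almost all the work for both directions.

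For the $(\Leftarrow)$ direction, I would assume no reduced expression of $w$ contains $[st]_{m_{s,t}}$ as a factor for any pair with $3\le m_{s,t}<\infty$, and take $\mathbf{w}_1,\mathbf{w}_2\in\mathcal{R}(w)$. Matsumoto's theorem furnishes a chain of single-step braid and commutation moves from $\mathbf{w}_1$ to $\mathbf{w}_2$; each intermediate word represents $w$ and has the same length (braid relations preserve both sides), hence lies in $\mathcal{R}(w)$. A non-commuting braid move applied somewhere in the chain would require $[st]_{m_{s,t}}$ to appear as a factor of some intermediate reduced expression, contradicting the hypothesis. Consequently every step of the chain is a commutation, so $\mathbf{w}_1$ and $\mathbf{w}_2$ are related by commutations only and $w\in\fc(W)$.

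For the $(\Rightarrow)$ direction, I would argue the contrapositive. Suppose $\mathbf{w}=\mathbf{x}\,[st]_{m_{s,t}}\,\mathbf{y}\in\mathcal{R}(w)$ with $m_{s,t}\ge 3$, and set $\mathbf{w}'=\mathbf{x}\,[ts]_{m_{s,t}}\,\mathbf{y}$, which is also in $\mathcal{R}(w)$. Define $\pi_{s,t}(\mathbf{u})$ to be the word obtained from $\mathbf{u}$ by deleting every letter outside $\{s,t\}$. If $\mathbf{u}_1$ and $\mathbf{u}_2$ differ by one commutation $ab\leftrightarrow ba$, then $m_{a,b}=2$, and since $m_{s,t}\ge 3$ one cannot have $\{a,b\}=\{s,t\}$; the swapped pair therefore contains at most one letter from $\{s,t\}$, and in either case the relative order and multiset of $\{s,t\}$-letters is preserved, whence $\pi_{s,t}(\mathbf{u}_1)=\pi_{s,t}(\mathbf{u}_2)$. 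By iteration, $\pi_{s,t}$ is constant on each commutation class. However, $\pi_{s,t}(\mathbf{w})$ and $\pi_{s,t}(\mathbf{w}')$ differ exactly in the central block, where the former reads $sts\cdots$ of length $m_{s,t}$ and the latter reads $tst\cdots$. Hence $\mathbf{w}$ and $\mathbf{w}'$ cannot lie in the same commutation class, so $w\notin\fc(W)$.

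The only place that requires real care is the invariance claim for $\pi_{s,t}$: one has to enumerate the two cases (both swapped letters outside $\{s,t\}$, or exactly one inside) and verify that deletion commutes with the swap in each. Once this small case analysis is in place, both implications reduce to bookkeeping around Matsumoto's theorem.
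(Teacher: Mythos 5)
Your proof is correct; the paper does not prove this proposition itself but cites Stembridge, and your argument (Matsumoto's theorem for one direction, the $\{s,t\}$-projection as a commutation-class invariant for the other) is essentially the standard proof given there. No gaps.
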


Let $\mathbf{w}=s_{i_1}\cdots s_{i_r}$ a reduced expression of $w\in W$.
Set $[r]:=\{1,2,\ldots, r\}$; we define the \textit{support} of $w\in W$ the set \[supp(w):=\{s\in S\mid \exists k\in [r]\mbox{, }s=s_{{i_k}}\}.\] 
Note that by Matsumoto Theorem, this set does not depend on the choice of $\mathbf{w}$. In particular, if $w\in\fc(W)$ and $\mathbf{w'}=s_{j_1}\cdots s_{j_r}$ is another reduced expression of $w$, it follows that
\[\left | \{k\in \left[r\right]\mid s_{i_k}=s\}\right |=\left |\{h\in \left[r\right ]\mid s_{j_h}=s\}\right |,\]
for all $s\in supp(w)$.\\

The following subset of fully commutative elements plays an important role in this paper.

\begin{Definition}
\label{comm}
We say that $w\in \fc(W)$ is \textit{completely commutative} if it is a product of commuting generators. We denote this subset by $\TTC(W)$ and we assume that $e\in \TTC(W)$. 
\end{Definition}

We now introduce two important closely related tools for studying $\fc$ elements: the Cartier–Foata normal form and the heap of an element, introduced in \cite{Fan} and \cite{viennot}, respectively. 

\begin{Theorem}[Cartier--Foata normal form]
\label{normalform}
Every $w\in \fc (W)$ admits a unique factorization   
\begin{equation} \label{CFNF}
w=u_0 \cdots u_m   
\end{equation}
called the \textit{Cartier-Foata normal form} (CFNF), satisfying the following conditions: 
\begin{enumerate}
    \item[(a)] $supp(u_0)=\ld(w)$; 
    \item[(b)] $u_i\in \TTC(W)$ for all $i\in \{0,1,\ldots, m\}$;
    \item[(c)] $\ell(w)=\ell(u_0)+\cdots +\ell(u_m)$;
    \item[(d)] if $t\in supp(u_{j+1})$, $j\in \{0,1\ldots, m-1\}$, then there exists $s\in  supp(u_j)$ such that $m_{s,t}\ge 3$.
\end{enumerate}
\end{Theorem}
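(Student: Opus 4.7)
The plan is to prove both existence and uniqueness by induction on $\ell(w)$, the base case $w=e$ being the empty factorization. The structural fact driving the whole argument is that for any $w\in\fc(W)$ the set $\ld(w)$ consists of pairwise commuting generators: if $s,t\in\ld(w)$ with $m_{s,t}\ge 3$, then $w$ admits reduced expressions beginning with $s$ and with $t$, but by Definition \ref{fcdefinition} these can only be related by commutation moves, which cannot switch the leading letter when $s$ and $t$ do not commute.

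For existence, I would set $u_0$ to be the product of the elements of $\ld(w)$, which is unambiguously defined as an element of $\TTC(W)$ by the observation above. Iterating the descent property for commuting descents shows that $u_0$ is a reduced prefix of $w$, so $w_1:=u_0^{-1}w$ satisfies $\ell(w_1)=\ell(w)-|\ld(w)|<\ell(w)$. Being a suffix of $w$, the element $w_1$ inherits full commutativity via Proposition \ref{caratterizzazionefc}. By induction $w_1=u_1\cdots u_m$ satisfies (a)--(d), and setting $w=u_0u_1\cdots u_m$ one reads off (a), (b) and (c) from the construction. To verify (d), suppose that some $t\in supp(u_{j+1})$ commutes with every $s\in supp(u_j)$; then $t$ can be slid past $u_j$ without altering the length, so $t\in\ld(u_j u_{j+1}\cdots u_m)$, contradicting the inductive choice of $u_j$ as the \emph{full} left-descent set of the element at stage $j$.

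For uniqueness, assume $w=u_0\cdots u_m=u'_0\cdots u'_{m'}$ both satisfy (a)--(d). Condition (a) forces $supp(u_0)=\ld(w)=supp(u'_0)$, and since both factors lie in $\TTC(W)$ with identical support, $u_0=u'_0$ as products of commuting generators. Cancelling on the left and applying the inductive hypothesis to the strictly shorter FC element $u_0^{-1}w$ yields $m=m'$ and $u_i=u'_i$ for every $i\ge 1$.

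The main delicate point is the verification of condition (d) in the existence step: one must show that the greedy recursive choice of $u_j$ as the entire left descent set at stage $j$ automatically guarantees the non-commutation condition between consecutive blocks. This rests on the principle that a generator commuting with an entire completely commutative block can be commuted through it, so if such a generator appeared as a left descent later than it should have, it would already have been forced into an earlier block, contradicting the maximality of the greedy step.
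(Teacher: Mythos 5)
The paper does not actually prove Theorem \ref{normalform}: it is quoted as a classical result with a pointer to the works of Fan and Viennot, so there is no in-paper argument to compare against. Your greedy induction (peel off the full left descent set at each stage) is the standard proof, and its overall architecture is sound.

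Two steps need tightening, one of which is a genuine logical gap. First, your justification that $\ld(w)$ consists of pairwise commuting generators is too quick: commutation moves \emph{can} change the leading letter (through a chain of intermediaries that each commute with the previous leader), so "cannot switch the leading letter when $s$ and $t$ do not commute" is not literally a property of single moves. The clean fix is either to track occurrences (the first letter $s$ of one reduced word and the occurrence of $t$ destined to become the first letter of the other never cross under commutation moves when $m_{s,t}\ge 3$, so $s$ stays strictly to the left of $t$), or to invoke the parabolic factorization: $s,t\in\ld(w)$ forces $[st]_{m_{s,t}}$ to be a prefix of $w$, contradicting Proposition \ref{caratterizzazionefc}. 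Second, and more seriously, the uniqueness induction does not close as written. After cancelling $u_0=u_0'$ you apply the inductive hypothesis to $w_1:=u_0^{-1}w=u_1\cdots u_m=u_1'\cdots u_{m'}'$, but the inductive hypothesis only applies to factorizations of $w_1$ satisfying (a)--(d) \emph{for $w_1$}; condition (a) for the tail, namely $supp(u_1)=\ld(w_1)$, is not among the hypotheses on the original factorization and must be derived. It does follow from (d): the inclusion $supp(u_1)\subseteq\ld(w_1)$ is clear since $u_1\in\TTC(W)$ is a prefix, and conversely any occurrence of a letter $t$ in $u_k$ with $k\ge 2$ is preceded, by (d), by some $s\in supp(u_{k-1})$ with $m_{s,t}\ge 3$, so it cannot be commuted to the front; hence every element of $\ld(w_1)$ lies in $supp(u_1)$. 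With that observation inserted, both your existence and uniqueness arguments go through.
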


\begin{Lemma}
\label{lemmaunitarissimo}
Let $w\in \fc(W)$ and $w=u_0 \cdots u_m$ be its CFNF. Let $z=s_{i_0}\cdots s_{i_k}$ be a reduced expression of a prefix of $w$ satisfying the following conditions: 
\begin{itemize}
    \item[(a)] if $k\ge1$, $s_{i_j}$ and $s_{i_{j+1}}$ are not commuting generators for all $0\le j\le k-1$;
    \item[(b)] $s_{i_k}\in supp(u_m)$;
    \item[(c)] $w=zv$ is a reduced product and $s_{i_k}\notin supp(v)$.
\end{itemize}
Then $k=m$. 
\end{Lemma}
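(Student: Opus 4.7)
The plan is to take the reduced expression $s_{i_0}s_{i_1}\cdots s_{i_k}\mathbf{v}$ of $w$, where $\mathbf{v}$ is any reduced expression of $v$, and track, for each letter, the index $\lambda$ of the CFNF layer in which its instance sits; write $\lambda_j$ for the layer of the letter at position $j+1$, i.e.\ of $s_{i_j}$. The strategy is to prove $\lambda_0=0$, $\lambda_k=m$, and $\lambda_{j+1}=\lambda_j+1$ for $0\le j<k$; combining yields $k=\lambda_k=m$.

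For the endpoints: position $1$ has no predecessors in the expression, so $\lambda_0=0$. For $\lambda_k$: condition (c) forces every instance of $s_{i_k}$ in $w$ to lie inside $z$, and condition (b) provides an instance in $u_m$; since two instances of the same generator in a reduced expression do not commute, they occupy strictly increasing CFNF layers following their reading order. Hence the instance in $u_m$ is the last one of $s_{i_k}$, namely the one at position $k+1$, whence $\lambda_k=m$.

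For the identity $\lambda_{j+1}=\lambda_j+1$, I induct on $j$ assuming $\lambda_\ell=\ell$ for all $\ell\le j$. Since $s_{i_j}$ and $s_{i_{j+1}}$ do not commute (condition (a)) and sit at consecutive positions, they cannot belong to the same CFNF layer (each $u_i$ is completely commutative), and the later one must sit higher, giving $\lambda_{j+1}\ge\lambda_j+1$. Conversely, $\lambda_{j+1}$ is controlled by those earlier positions whose letter does not commute with $s_{i_{j+1}}$; in the chosen reduced expression these positions lie among $s_{i_0},\ldots,s_{i_j}$, and the inductive hypothesis assigns each of their layers an index $\le j$, forcing $\lambda_{j+1}\le j+1$. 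The main subtle point is this upper bound: it is essential both that the positions preceding $s_{i_{j+1}}$ in our expression are exactly the $s_{i_\ell}$ with $\ell\le j$, and that each $\lambda_\ell$ has been pinned down by the induction.
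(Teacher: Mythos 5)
Your proof is correct and takes essentially the same route as the paper's: the paper likewise pins down the layer of each letter of $z$, showing $s_{i_j}\in supp(u_j)$ (your $\lambda_j=j$) via condition (a) together with Theorem \ref{normalform}(d), and then uses (b) and (c) to force $k=m$. The only cosmetic difference is the last step, where the paper argues by contradiction from $k<m$ via $supp(u_m)\subseteq supp(v)$, whereas you directly identify the occurrence of $s_{i_k}$ in $u_m$ with the final letter of $z$.
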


\begin{proof}
We have that $s_{i_0}\in supp(u_0)$, since $s_{i_0}\in \ld(w)$. Since $s_{i_0}\cdots s_{i_j}$ is prefix of $w$ for each $1\le j\le k$, by (a) and Theorem \ref{normalform}(d), we have that $s_{i_j}\in supp(u_j)$ and $z$ is a prefix of $u_0\cdots u_k$. Furthermore, by (b) and (c) we can conclude that $k=m$. Otherwise if $k<m$, then $w=zv$ is a reduced product and $supp(u_m)\subseteq supp(v)$, so $s_{i_k}\in supp(v)$, that is a contradiction. 
\end{proof}

\begin{Definition}
\label{heap}
Let $w\in W$. Consider $\mathbf{w}=s_{i_1}\cdots s_{i_r}\in \mathcal{R}(w)$, we define a partial ordering $\prec$ on $[r]$ via the transitive closure of the relation \[j\prec k \mbox{ if }j<k \mbox{ and } m_{s_{i_j},s_{i_k}}\ge 3.\]
In particular, $j\prec k$ if $j<k$ and $s_{i_j}=s_{i_k}$. The \textit{heap} of $\mathbf w$ is the labeled poset given by the triple $H({\mathbf w}):=([r], \prec, \varepsilon)$, where $ \varepsilon:j\mapsto s_{i_j}$ is the labeling map.
\end{Definition}

It is well-known that if $w\in \fc(W)$ and $\mathbf{w},\mathbf{w'}\in \mathcal{R}(w)$, then $H(\mathbf{w})\cong H(\mathbf{w'})$ as labeled poset. Hence, it is well defined $H(w):=H(\mathbf{w})$, where $\mathbf{w}\in \mathcal{R}(w)$, see \cite{viennot}.

\begin{Remark}\label{not:label}
Although the proofs presented in this paper do not rely on the concept of heap, visualizing the heap of an element can provide valuable intuition about its structure. This is particularly helpful for the star-irreducible elements discussed later, whose names are inspired by the shapes of their associated heaps. We follow the notation of \cite{BFS} to depict the Hasse diagram of a heap, which we now briefly recall. In the Hasse diagram of $H(w)$, $w\in \fc(\D)$, vertices with the same labels are drawn in the same column. In particular, we represent heaps in $n+1$ columns with the following criterion. Both vertices with label $s_0$ and $s_1$ (respectively $s_{n+1}$ and $s_{n+2}$) will be drawn in the first (respectively last) column of $H(w)$. Moreover, when $s_0s_1$ (respectively $s_{n+1}s_{n+2}$) is a factor of $w$, we represent the vertices associated to these $s_0$ and $s_1$ (respectively $s_{n+1}$ and $s_{n+2}$) with a single mark point labeled $s_0s_1$ (respectively $s_{n+1}s_{n+2}$). In a similar way, we represent a heap of $w\in \fc(\B)$ with $n+1$ columns, where the criterion for the placement of double vertices is applied only to $s_0$ and $s_1$. 

Additionally, we represent the Hasse diagram of  $H(w)$ so that all entries corresponding to elements in
$\ld(w)$ appear at the same vertical level in the topmost row of the diagram, while all other entries are placed as high as possible, subject to the partial order. Given this representation of $H(w)$, the CFNF of $w$ can be obtained by reading the labels of the vertices row by row, from top to bottom, and from left to right within each row. 
\end{Remark}

\begin{Example}
\label{exheap}
Consider the following elements written in their CFNF: 
\begin{align*}
w_1&=(s_0s_4)(s_3s_5)(s_2s_4s_6s_7)s_1\in \fc(\widetilde{D}_{7});\\
w_2&=(s_3)(s_2s_4) (s_1s_3s_5)(s_2s_4s_6)(s_0s_3s_5)(s_2s_6)\in \fc(\widetilde{B}_{6}).
\end{align*}
 Their heaps are depicted in Figure $\ref{exampleheap}$. 

\begin{figure}[ht]
    \centering
    \includegraphics[width=0.5\linewidth]{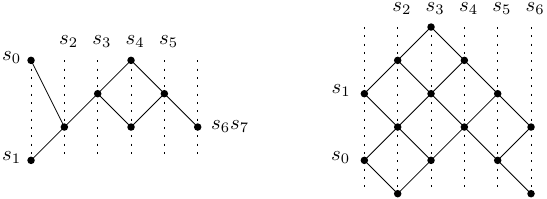}
    \caption{Hasse diagrams of $H(w_1)$ and $H(w_2)$ of Example \ref{exheap}.}
    \label{exampleheap}
\end{figure}
\end{Example}

Fully commutative elements in a Coxeter group $W$ index a basis of the so-called generalized Temperley--Lieb algebra of $W$. The following is the presentation of $\mathrm{TL}(\D)$ given by Green in \cite[Proposition 2.6]{GreenStar} that will be used in our work as a definition. The Coxeter graph of type $\D$ is depicted in Figure \ref{coxgra}.

\begin{Definition}\label{def:tl-algebras}
The \textit{Temperley--Lieb algebra of type $\D$}, denoted by $\mathrm{TL}(\D)$, is the $\Zd$-algebra generated by $\{b_0, b_1, \ldots, b_{n+2}\}$ with defining relations:

\begin{enumerate}
\item[(d1)] $b_i^2=\delta b_i$ for all $i\in \{0,\ldots, n+2\}$;
\item[(d2)] $b_i b_j=b_jb_i$ if $s_i$ and $s_j$ are not adjacent nodes in the Coxeter graph;
\item[(d3)] $b_i b_j b_i=b_i$ if $s_i$ and $s_j$ are adjacent nodes in the Coxeter graph.
\end{enumerate}
\end{Definition}

For any $s_{i_1}\cdots s_{i_k}$ reduced expression of $w$ in $\fc(\D)$, we set
$$b_w=b_{i_1} \cdots \ b_{i_k}.$$
It is easy to see that $b_w$ does not depend on the chosen reduced expression of $w$. In \cite{Graham}, Graham proved that the set $\{b_w \mid w\in \fc(\D)\}$ is a basis for $\tl(\D)$, which is usually called the \textit{monomial basis} (see also \cite[Proposition 2.4]{GreenStar}).

\subsection{Star and weak star reducible $\fc$ elements}

In this section we fix an arbitrary irreducible Coxeter system $(W,S)$.
Now, we recall the notions of star and weak star reducibility in $(W,S)$ and we refer the reader to \cite{ernst, GreenStar, green-trace} for more details. 
\begin{Definition}
\label{def:star-red}
    Let $w\in \fc(W)$ and suppose that $s\in \ld(w)$ (respectively, $s\in \rd(w)$). We say that $w$ is \emph{left (respectively, right) star reducible by $s$ with respect to $t$} to $sw$ (respectively, $ws$) if there exists $t\in \ld(sw)$ (respectively, $t\in \rd(ws)$) with $m_{s,t}\ge 3$. 
\end{Definition}

Observe that if $m_{s,t}\ge 3$, then $w$ is left (respectively, right) star reducible by $s$ with respect to $t$ if and only if $w=stv$ (respectively, $w=vts$), with $\ell(w)=\ell(v)+2$.

We now define the concept of weak star reducible elements, first introduced in \cite{ernst}, which is related the notion of cancellable elements introduced by Fan in \cite{Fan}. 

\begin{Definition}
\label{irr}
Let $w \in \fc(W)$ and $s,t\in S$, we say that $w$ is \textit{left (respectively, right) weak star reducible by $s$ with respect to $t$ to $sw$ (respectively, $ws$)}, if the following hold:
\begin{enumerate}
    \item[(a)] $w$ is left (respectively, right) star reducible by $s$ with respect to $t$ to $sw$ (respectively, $ws$);
    \item[(b)] $tw\notin \fc(W)$ (respectively,  $wt\notin \fc(W)$). 
\end{enumerate}

\end{Definition}

Let $w\in \fc(W)$, we say that $w$ is \emph{left (respectively, right) (weak) star irreducible} if does not exist $s,t\in S$ such that $w$ is left (respectively, right) (weak) star reducible by $s$ with respect to $t$. Moreover, we say that $w$ is \emph{(weak) star irreducible} if it is both left and right (weak) star irreducible. We set also 
\begin{align*}
    \I(W)&=\{w\in \fc(W) \mid w \mbox{ is star irreducible}\}, \\
    \IR(W)&=\{w\in \fc(W) \mid w \mbox{ is weak star irreducible}\}.
\end{align*}
Clearly, $\I(W)\subseteq \IR(W)$ and denote by $\IRc(W):=\IR(W)\setminus\I(W)$. Observe that when $W$ is simply laced (i.e., $m_{s,t}\leq 3$ for all $s,t\in S)$ the definitions of star reducible and weak star reducible are equivalent, therefore we will distinguish these two definitions only in Section \ref{se3} when dealing with type $\B$.  
\smallskip

From now on, to simplify the writing, we will say \textit{reducible} (\textit{irreducible}) instead of \textit{star reducible} (\textit{star irreducible}). Moreover, we use the notation $$w\red_t sw \quad\mbox{(respectively, } w\red_t ws)$$ to denote that $w$ is (weak) left (respectively, right) reducible by $s$ with respect to $t$; we omit the $t$ when we it is not necessary. In general, by $w\red v$ we mean that either $w\red_t sw$ or $w\red_t ws$ for some $s,t\in S$ with $m_{s,t}\ge 3$. We will explicitly write ``weak'' when we consider a weak star reduction.

\begin{Remark}\
\label{oss}

\begin{enumerate}
    \item[(1)] If $w\red_t sw$ (respectively, $w\red_t ws$), then $\ell(tw)>\ell(w)$ (respectively, $\ell(wt)>\ell(w)$).
    \item[(2)] We have that $w$ is left (weak) irreducible if and only if $w^{-1}$ is right (weak) irreducible. Thus, $w$ is (weak) irreducible if and only if $w^{-1}$ is also (weak) irreducible.
    \item[(3)] If $3\le m_{s,t}<\infty$, define $\xi_{s,t} := [st]_{m_{s,t}-1}$ (see \eqref{eq:mst}). We have that $w\red_t sw$ (respectively, $w\red_t ws$) is a weak reduction if and only if $\xi_{s,t}$ is a prefix (respectively, $\xi_{s,t}^{-1}$ is a suffix) of $w$.    
    \item[(4)] Assume $m_{s,t}=3$ and $w=u_0\cdots u_m$ be its CFNF, $w\red_t sw$ if and only if $s\in supp(u_0)$, $t\in supp(u_1)$ and for every $s'\in S$ such that $m_{s',t}= 3$, $s'\notin supp(u_0)$. 
    \item[(5)] Assume $s\in \ld(w)$ (resp. $s\in \rd(w)$). If $t\in S$ such that $m_{s,t}\ge 3$ and $tw\notin \fc(W)$ (respectively, $wt\notin \fc(W)$), then $w$ admits $st$ as a prefix (respectively, $ts$ as a suffix), so $w\red_t sw$ (respectively, $w\red_t ws$) is a weak reduction. 
    \item[(6)] Assume $s\ne s'$, if $w\red_t sw$ and $w\red_{t'} s'w$ (respectively, $w\red_t ws$ and $w\red_{t'} ws'$) are (weak) reductions, then $m_{x,x'}=2$ for all $x\in \{s,t\}$ and $x'\in \{s',t'\}$, therefore $sw\red_{t'} s'sw$ (respectively, $ws \red_{t'} wss'$) is a (weak) reduction. 
    \item[(7)] 
    Let $w\red_t sw$ and $w\red_{t'} ws'$ be (weak) reductions. Then $ws'\red_t sws'$  is a (weak) reduction if and only if $sw \red_{t'} sws'$ is a (weak) reduction. 
\end{enumerate}
\end{Remark}

\begin{Definition}
\label{starred}
Let $w,v\in \fc(W)$. We say that $w$ is \emph{(weak) reducible to $v$} if there is a sequence (also trivial) of (weak) reductions
\begin{equation} \label{eq:seq}
    w_0=w\red w_1\red w_2\red \cdots \red w_l=v.
\end{equation}
\end{Definition}

In every step of \eqref{eq:seq}, the length of the elements decreases by 1, therefore, any $w\in \fc(W)$ can be reduced to a (weak) irreducible element.

The following two results are probably known, but we include their proofs due to the lack of precise references.

\begin{Lemma}
\label{lemma:lemmaseq}
   Let $w\in \fc(W)$ be (weak) reducible to $v$ (weak) irreducible with a sequence
   \begin{equation} \label{eq:primseq}
       w_0=w\red w_1\red w_2\red \cdots \red w_l=v.
   \end{equation}
   If there exists a (weak) reduction $w\red w'_1$, with $w_1'\ne w_1$, then there exists $v'$ (weak) irreducible such that $w$ is (weak) reducible to $v'$ by a sequence  \[w_0'=w\red w_1'\red w_2'\red \cdots \red w_l'=v'.\] Moreover, if $|S|>2$ and $supp(v)$ is complete, then $v=v'$.
\end{Lemma}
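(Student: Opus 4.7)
This is a confluence-type statement, and I would prove it by induction on $l$, using as the key engine the one-step diamond properties in Remark~\ref{oss}(6)--(7). The core technical step is \emph{local confluence}: for any two distinct reductions $w\red w_1$ and $w\red w_1'$, I would produce an element $w^*$ with $w_1\red w^*$ and $w_1'\red w^*$. In the same-side case (both reductions on the left, or both on the right), Remark~\ref{oss}(6) asserts that all four generators pairwise commute and yields $w^*$ by applying the two commuting reductions in either order. For one left reduction $w\red_t sw$ and one right reduction $w\red_{t'}ws'$, Remark~\ref{oss}(7) shows that $w^*=sws'$ works from both sides or from neither, depending on whether $\ell(sws')=\ell(w)-2$. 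When the one-step diamond fails, I would examine the CFNF of $w$: the hypotheses $s\in\ld(w)$, $s'\in\rd(w)$ together with the failure condition force structural constraints on the Cartier--Foata blocks of $w$, allowing a short detour to reach a common descendant.

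Granted local confluence, the main claim follows by induction on $l$. For $l=1$, $v=w_1$ is irreducible; if $w_1'$ admitted any reduction, local confluence would force $w_1=v$ to admit one too, a contradiction, so $w_1'$ may be taken as $v'$. For $l>1$, choose $w^*$ as above. Now $w_1$ has two distinct first reductions $w_1\red w_2$ (from the original sequence) and $w_1\red w^*$; since $w_1\red w_2\red\cdots\red v$ has length $l-1$, the inductive hypothesis applied to $w_1$ produces a parallel sequence $w_1\red w^*\red\cdots\red v^*$ of length $l-1$ ending at an irreducible $v^*$. Prepending $w\red w_1'\red w^*$ yields the required length-$l$ sequence from $w$ through $w_1'$ to $v':=v^*$.

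For the uniqueness claim, assume $|S|>2$ and $supp(v)=S$. The key observation is that a reduction $u\red u'$ preserves the support unless the cancelled generator occurs only once in $u$; since $supp(v)=S$, no generator can drop out along the length-$l$ sequence from $w$ to $v$, so every intermediate element has full support $S$. This rigidity, together with local confluence and the hypothesis $|S|>2$ (which rules out degenerate configurations where the left and right reductions essentially involve the same adjacent pair), lets one verify inductively that the common descendant $w^*$ is uniquely determined and still has full support at each stage, so $v^*=v$ and hence $v'=v$. The main obstacle throughout is the mixed-side subcase of local confluence where the one-step diamond fails: constructing a common descendant by a detour while keeping the total sequence length equal to $l$, and preserving the weakness of each reduction in the weak case, is the main bookkeeping challenge.
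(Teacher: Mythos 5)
There is a genuine gap: the engine of your plan --- local confluence, i.e.\ producing a common descendant $w^*$ of $w_1$ and $w_1'$ after one step (or a ``short detour'') --- is false for this reduction system, and no amount of bookkeeping can repair it. The simplest counterexample is $w=st$ with $m_{s,t}=3$: the two reductions $w\red_t sw=t$ and $w\red_s wt=s$ land on two \emph{distinct irreducible} elements, so no common descendant exists at all. More substantially, the paper's example following Theorem \ref{theorem:starope} exhibits $w_1\in\fc(\widetilde D_7)$ reducing to two distinct irreducible elements $v_1\neq v_2$; since the system is terminating, Newman's lemma would turn local confluence into uniqueness of normal forms, so local confluence must fail. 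This breaks your base case ($l=1$: you infer irreducibility of $w_1'$ from a diamond that need not close), your inductive step (which requires $w^*$ with $w_1\red w^*$ and $w_1'\red w^*$), and your uniqueness argument (which again invokes ``local confluence''). Note also that the lemma deliberately does \emph{not} claim $v=v'$ in general --- only that the lengths agree --- precisely because confluence fails.

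The paper's proof avoids this by a different mechanism: writing $w_1'=sw$ with prefix $z=st$ (or $z=\xi_{s,t}$ in the weak case), it tracks $z$ along the \emph{given} sequence $w_0\red\cdots\red w_l$ until the first index $j$ at which $z$ stops being a prefix, which forces $w_{j+1}=sw_j$ or $w_{j+1}=w_jt$. In the first case the new sequence is $w,\,sw,\,sw_1,\dots,sw_{j-1},\,w_{j+1},\dots,v$ (the sequences merge at step $j+1$, using Remark \ref{oss}(6)--(7) only to commute the single reduction $s$ past the others); in the second case the sequences never merge, the new endpoint is $v'=sv t\neq v$, and one checks directly that $v'$ is irreducible and that this case is excluded when $|S|>2$ and $supp(v)$ is complete (since there $v=su_{l-1}=u_{l-1}s$ with $u_{l-1}$ commuting with $s$, contradicting connectedness of the Coxeter graph). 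If you want to salvage your write-up, you should replace the local-confluence step by this prefix-tracking argument; the diamond properties of Remark \ref{oss}(6)--(7) are only auxiliary tools there, not a confluence statement.
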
 

\begin{proof}
    We first prove the statement for star reducibility. 
    We assume without loss of generality that $w\red_tsw=:w_1'$, with $w_1'\ne w_1$. By hypothesis, $z:=st$ is a prefix of $w$, hence in sequence \eqref{eq:primseq}, there exists $1\le j\le l-1$ such that $z$ is a prefix of $w_i$ for all $1\le i\le j$ but $z$ is not a prefix of $w_{j+1}$. Then, either $w_j\red_t sw_j=w_{j+1}$, or $w_j\red_s w_jt=w_{j+1}$.   
    Moreover, since $w_i \red w_{i+1}$, by Remark \ref{oss}(6) and (7), $sw_i$ is reducible to $sw_{i+1}$ for all $0\le i\le j-1$.
    Now, we distinguish the two cases:
    \begin{itemize}
        \item If $w_{j+1}=sw_j$, we define \[w_i':=\begin{cases} sw_{i-1},\mbox{ for }2\le i\le j; \\
        w_i, \mbox{ for } j+1\le i\le l. \end{cases}\] We note that if $w_{j-1}\red w_j$ by the pair $\{s_{j-1},t_{j-1}\}$, then by Remark \ref{oss}(6) and (7) $w_j'=sw_{j-1}\red sw_j=w_{j+1}$ by the same pair. Hence, it follows that $w$ is reducible to $v$ with sequence \[w_0'=w\red w_1'\red w_2'\red \cdots \red w_l'=v.\]
        \item If $w_{j+1} =w_jt$, we define \[w_i':=\begin{cases}
            sw_{i-1}, \mbox{ for } 2\le i \le j; \\
            sw_{i}t, \mbox{ for } j+1\le i \le l. 
        \end{cases} \] Note that, for $i\le j-1$, $w_{i}'\red w_{i+1}'$ as above. Recall that $z$ is a prefix of $w_{j}$ but not of $w_{j+1}$. Hence, $w_j=zu_j=u_jz$, thus for all $x\in supp(u_j)$, $x$ commute with both $s$ and $t$. Therefore, $w_{j+1}=u_js=su_j$ and there exists a sequence \[u_j\red u_{j+1}\red \cdots \red u_{l-1}\in \I(W),\] such that $w_i=su_{i-1}$, so $u_{i-1}$ commutes with both $s$ and $t$ for all $j+1\le i\le l$. Moreover, if $w_{j-1}\red w_j=stu_j$ by the pair $\{s_{j-1},t_{j-1}\}$, then $w_{j}'=sw_{j-1}\red w_{j+1}'=sw_{j+1}t$, since in the case $s_{j-1}t_{j-1}$ is a prefix of $w_{j-1}$ then $sw_{j-1}\red sw_{j}=tu_{j}=sw_{j+1}t$ by Remark \ref{oss}(6). Otherwise, if $s_{j-1}t_{j-1}$ is a suffix of $w_{j-1}$, then $sw_{j-1}\red sw_{j}=tu_{j}=sw_{j+1}t$ by Remark \ref{oss}(7). Furthermore, $w_{i}'=sw_it=u_{i-1}t\red u_it=sw_{i+1}t=w_{i+1}'$
        By calling $v':=w'_l=tu_{l-1}\in\I(W)$, we have that $w$ is star reducible to $v'$ with sequence:
       \[w_0'=w\red w_1'\red w_2'\red  \cdots \red w_l'=v'.\]
    \end{itemize}
    Assume now $|S|>2$, if $supp(v)$ is complete, then necessarily we have the first case since in the second one $v=su_{l-1}=u_{l-1}s$ which is impossible, therefore $v=v'$.

    The assertions for weak star reducibility can be proved similarly, considering the prefix $z=\xi_{s,t}$ instead of $z=st$. 
\end{proof}

\begin{Theorem}\label{theorem:starope}\
    \begin{itemize}    
        \item[(a)] If $v\in \fc(W)$ is left (respectively, right) (weak) irreducible and obtained from $w$ by a series of left (respectively, right) (weak) reductions, then $v$ is unique.
        \item[(b)] Let $w,v,v'\in \fc(W)$ with $v,v'$ (weak) irreducible. If $w$ is (weak) reducible to $v$ and $v'$ with respectively sequences \begin{align} w_0&=w\red w_1\red w_2\red \cdots \red w_l=v, \label{eq:seq1}\\
    w_0'&=w\red w_1'\red w_2'\red \cdots \red w_k'=v' \label{eq:seq2};
    \end{align} then $k=l$. 
        \item[(c)] Assume $|S|>2$. If $w$ is (weak) reducible to $v$ (weak) irreducible and $supp(v)$ is complete, then $v$ is unique. 
    \end{itemize}
\end{Theorem}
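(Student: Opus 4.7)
The plan is to prove all three parts simultaneously by induction on $\ell(w)$, the base $\ell(w)=0$ being trivial. For (b), compare the first steps of \eqref{eq:seq1} and \eqref{eq:seq2}. If $w_1=w_1'$, the inductive hypothesis applied at $w_1$ (which satisfies $\ell(w_1)<\ell(w)$) gives $l-1=k-1$, hence $l=k$. If $w_1\ne w_1'$, I would apply Lemma \ref{lemma:lemmaseq} to \eqref{eq:seq1} together with the alternative first step $w\red w_1'$: it produces a (weak) reduction sequence through $w_1'$ of the same length $l$, ending at some (weak) irreducible $\widetilde v$. Thus $w_1'$ admits (weak) reduction sequences of lengths $l-1$ and $k-1$ to the (weak) irreducibles $\widetilde v$ and $v'$, and induction at $w_1'$ yields $l-1=k-1$.

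For (a), restrict to left (weak) reductions (the right case is symmetric). I would first prove a local confluence (diamond) property: if $w\red_t sw$ and $w\red_{t'} s'w$ are two distinct left (weak) reductions, then by Remark \ref{oss}(6) we have $m_{x,x'}=2$ for all $x\in\{s,t\}$ and $x'\in\{s',t'\}$. Setting $w'':=s'sw=ss'w$, one checks directly that both $sw\red_{t'}w''$ and $s'w\red_t w''$ are valid left (weak) reductions: the companions $t',t$ remain in the relevant left descent set because of the commutations, and in the weak case the prefix $\xi_{s',t'}$ (resp.\ $\xi_{s,t}$) is preserved under left-multiplication by $s$ (resp.\ $s'$), since $s$ (resp.\ $s'$) commutes with every letter appearing in it. An induction on $\ell(w)$ in the style of (b) then yields uniqueness: if the first steps of two left (weak) reduction sequences from $w$ coincide, induct at $w_1$; if they differ, the diamond provides a common descendant $w''$, and applying induction at $w_1$ and at $w_1'$ identifies each of the given endpoints $v,v'$ with a left (weak) irreducible reached from $w''$.

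For (c), the induction proceeds exactly as in (b), but at the split step $w_1\ne w_1'$ we invoke the \emph{moreover} clause of Lemma \ref{lemma:lemmaseq}: since $|S|>2$ and $supp(v)$ is complete, the alternative sequence produced through $w_1'$ terminates at $v$ itself. Thus $w_1'$ admits (weak) reduction sequences to both $v$ and $v'$, and, since $supp(v)$ is still complete, the inductive hypothesis at $w_1'$ forces $v=v'$. The main obstacle is the diamond verification in (a), namely checking that $t'\in\ld(s'sw)$ and, in the weak case, that the whole prefix $\xi_{s',t'}$ of $w$ still appears as a prefix of $sw$; both boil down to short applications of the pairwise commutations supplied by Remark \ref{oss}(6), so no conceptually new difficulty arises beyond what Lemma \ref{lemma:lemmaseq} already packages.
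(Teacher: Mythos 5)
Your argument is correct, and parts (b) and (c) follow the paper's own route almost verbatim: induction with Lemma \ref{lemma:lemmaseq} (and its ``moreover'' clause for part (c)) used to replace the first step of one sequence by the first step of the other, then the inductive hypothesis applied one level down. Part (a) is where you genuinely diverge. The paper proves (a) by showing, by induction along the reduction sequence, that every left irreducible suffix of $w$ remains a suffix of the end result $v$; applying this to $v'$, which is itself a left irreducible suffix of $w$, gives $v'\in\su(v)$, and by symmetry $v\in\su(v')$, hence $v=v'$. You instead run a local-confluence (Newman's lemma) argument: Remark \ref{oss}(6) supplies exactly the diamond $sw\red_{t'}s'sw=ss'w$ and $s'w\red_{t}ss'w$ for two distinct first steps, and induction on $\ell(w)$ then identifies each of $v,v'$ with an irreducible reached from the common descendant $ss'w$. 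Both proofs are sound. Your version is the standard rewriting-theoretic one and leans only on Remark \ref{oss}(6), which the paper states without proof; your explicit check that the companion descent $t'$ and, in the weak case, the whole prefix $\xi_{s',t'}$ survive left multiplication by $s$ (because $s$ commutes with every letter involved) is therefore a worthwhile supplement. The paper's version is slightly shorter and yields as a by-product the stronger structural fact that left irreducible suffixes are preserved under left reductions, which your argument does not produce.
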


\begin{proof}
   We first prove the statement for star reducibility.
    \begin{itemize} 
        \item[(a)] 
        Suppose $w$ reducible to $v$ by a series of left reductions with sequence \[w_0=w\red w_1\red \cdots \red w_l=v.\]
        First we show the following claim by induction on $l$:  \begin{equation}              \label{eq:suff}
                     \mbox{if }x\in \su(w) \mbox{ is left irreducible, then }x\in \su(v).
                    \end{equation} If $l=0$, then $w$ is left irreducible, so the claim trivially holds. So suppose $l\ge 1$ and $w_1=sw$ with $st$ a prefix of $w$ with $m_{s,t}\ge 3$. We have that $x\in \su(w_1)$, otherwise $st$ would be a prefix of $x$ which is not possible since $x$ is left irreducible. Therefore, since $w_1$ is reducible to $v$ by a series of left reductions, by inductive hypothesis $x\in \su(v)$. 
    
                    Now, we observe that if $w$ is reducible to $v'$ left irreducible by a series of left reductions, we have that $v'\in \su(w)$, so by \eqref{eq:suff} $v'\in \su(v)$. So, the same argument, switching the role of $v$ and $v'$, shows that $v\in \su(v')$, thus $v=v'$, hence statement (a) holds. 
       \item[(b)] We proceed by induction on $l$. If $l=0$, then $w$ is irreducible and the thesis trivially holds. So suppose $l\ge 1$. We have that $w$ is left or right reducible to $w_1$ with respect to a pair $\{s_1,t_1\}$, then by applying Lemma \ref{lemma:lemmaseq} to the sequence in \eqref{eq:seq2}, there exists $v''\in \I(W)$ such that $w$ is reducible to $v''$ with a sequence \[w_0''=w\red w_1''=w_1\red \cdots \red w_k''=v''.\] Therefore, $w_1$ is reducible to $v''$ and $v$, so by inductive hypothesis we can conclude that $k=l$. 
       \item[(c)] Assume $|S|>2$, let $w$ be reducible to $v\in \I(W)$, such that $supp(v)$ is complete.
       We proceed by induction on $l$. If $l=0$, then $w=v$ and the claim trivially holds. On the other hand, assume $l>0$ and suppose $w$ be reducible to $v'\in \I(W)$ with sequence \[ w_0'=w\red w_1'\red \cdots \red w_l'=v'.\] Applying Lemma $\ref{lemma:lemmaseq}$, we have that $w$ is reducible to $v$ by a sequence \[w_0''=w\red w_1''=w_1'\red \cdots \red w_l''=v.\] Then, $w_1'$ is reducible to $v'$ and $v$, so by inductive hypothesis, $v=v'$.
    \end{itemize}  
    The statements for weak star reducibility can be proved similarly, in particular in the proof of (a) it is sufficient to work with prefix $\xi_{s,t}$ instead of prefix $st$. On the other hand, for proofs (b) and (c) one has only to apply the weak star version of Lemma $\ref{lemma:lemmaseq}$. 
\end{proof}

\begin{Example}\
\begin{itemize}
    \item[(1)] From Theorem \ref{theorem:starope}, the irreducible element obtained from $w$ may depend on the choice of the reduction sequence when both left and right reductions are used. For instance, let $w_1\in \widetilde{D}_{7}$ be as in Example \ref{exheap}. Then $w_1$ is reducible to both  $v_1=s_0s_3s_6s_{7}$ and $v_2=s_1s_4s_6s_7$ in $\I(\widetilde{D}_{7})$ via the following sequences:
\begin{align*}
    w_1&\red s_4w_1\red s_4w_1s_1 \red s_5s_4w_1s_1 \red s_5s_4w_1s_1s_2 \red v_1=s_5s_4w_1s_1s_2s_4=s_0s_3s_6s_{7};\\
     w_1&\red s_0w_1\red s_4s_0w_1 \red s_3s_4s_0w_1 \red s_5s_3s_4s_0w_1 \red v_2=s_2s_5s_3s_4s_0w_1=s_1s_4s_6s_7.     
\end{align*} 

\item[(2)] Recall that irreducibility and weak irreducibility are different in non-simply laced Coxeter groups. Consider $w_2\in \fc(\widetilde{B}_{6})$ in Example \ref{exheap}, then $w_2$ is weak reducible to $v_2=(s_1s_3s_5)(s_2s_4s_6)(s_0s_3s_5) \in \IRc(\widetilde{B}_{6})$ by a sequence:
\begin{align*}
    w_2\red s_3w_2\red s_4s_3w_2 \red s_2s_4s_3w_2 \red s_2s_4s_3w_2s_6 \red v_2=s_2s_4s_3w_2s_6s_2.
\end{align*}
Moreover, $v_2$ is reducible to $s_2s_4s_6\in \I(\widetilde{B}_6)$. Thus, $w_2$ is also reducible to $s_2s_4s_6$. 
\end{itemize}
\end{Example}

\section{Irreducibility in $\fc(\D)$} \label{se2} 

In this section, we classify the irreducible elements in a Coxeter system of type $\D$. Recall that, since $\D$ is simply laced, star reducibility and weak star reducibility coincide.

\begin{Definition}
\label{occorrenze}
Let $w \in \fc(\D)$. We denote by
\begin{align*}
f_\bullet(w)&:=\text{max$\{\#$ of occurrences of the factor $s_0s_1$ in $\mathbf{w} \mid  \mathbf{w} \in \mathcal{R}(w)\};$}\\
f_\circ(w)&:=\text{max$\{\#$ of occurrences of the factor $s_{n+1}s_{n+2}$ in $\mathbf{w} \mid  \mathbf{w} \in \mathcal{R}(w)\}.$}
\end{align*}
\end{Definition}

\begin{Remark}
\label{rem:occirr}
Let $w\in \fc(\D)$, then
\begin{itemize}
    \item[(1)] $f_\bullet(w) = f_\bullet(w^{-1})$ and $f_\circ(w) = f_\circ(w^{-1})$;
    \item[(2)] if $ w \red v  $, then $f_\bullet(v)=f_\bullet(w)$ and $f_\circ(v)=f_\circ(w)$.
\end{itemize}
 
\end{Remark}

\begin{Example}
\label{exocc}
Let $w,v\in \fc(\widetilde{D}_{10})$, $w=s_0s_3s_5s_9s_1s_2s_4s_6s_8s_3s_5s_7s_{10}$ and $v=s_0s_5s_{9}s_{1}s_{10}$. Then we can choose
\begin{align*}   \mathbf{w}=s_3s_5s_9(s_0s_1)s_2s_4s_6s_8s_3s_5s_7s_{10} \quad\mbox{and}\quad
    \mathbf{v}=(s_0s_1)s_5(s_9s_{10}).
\end{align*}
Hence, $f_\bullet(w)=1$, $f_\circ(w)=0$ and $f_\bullet(v)=f_\circ(v)=1$.
\end{Example}

\begin{Notation}
\label{helpestremi}
Since the generators $s_0$ and $s_1$ play equivalent roles, we introduce the notation $s_\bullet$ to denote either $s_0$ or $s_1$, and write $\{\s,\ti\}=\{s_0,s_1\}$. In the same way, we denote by $\sn$ either $s_{n+1}$ or $s_{n+2}$, and $\{\sn,\tn\}=\{s_{n+1},s_{n+2}\}$. 
\end{Notation}

We denote by $\fzp,\fzs$ the elements in $\fc(\D)$
\begin{align}
    \fzp=s_2s_3\cdots s_ns_{n+1}s_{n+2}\quad \mbox{and} \quad   \fzs=s_ns_{n-1}\cdots s_{2}s_1s_0.\label{eq:AB}
\end{align}

\begin{Definition}
\label{def:zigzag}
An element $w\in \fc(\D)$  is called a \textit{complete zigzag}  if it admits a reduced expression of one of the following forms:
\begin{enumerate}
    \item $s_0s_1(AB)^kA^h$,
    \item $s_{n+2}s_{n+1}(BA)^kB^h$, 
\end{enumerate}
where $k\ge 0$, $h\in \{0,1\}$ and $k+h >0$.

\end{Definition}
Denote by $ \zz(\D) \subset \fc(\D) $ the set of complete zigzag elements, and note that they are irreducible (see some examples in Figure \ref{zigzagfig}).

\begin{figure}[h!]
    \centering
    \includegraphics[width=1.0\linewidth]{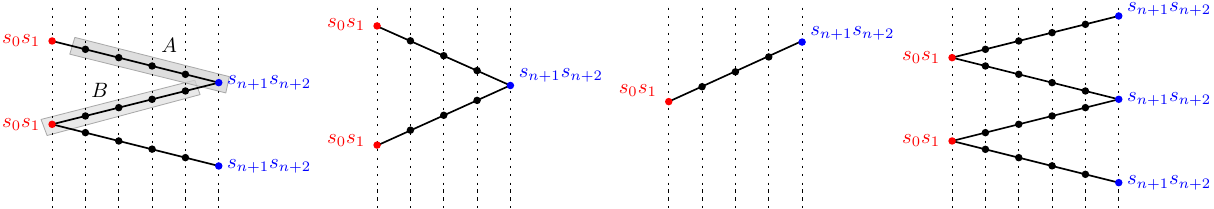}
    \caption{Examples of heaps of complete zigzags.}
    \label{zigzagfig}
\end{figure}

We observe that, if $w$ is a complete zigzag, then $\ld(w)$ and $\rd(w)$ coincide with either $\{s_0,s_1\}$ or $\{s_{n+1},s_{n+2}\}$. Moreover, we have that $f_\bullet(w),f_\circ(w) \ge 1$. In particular, $\ell(w)=(2k+h)(n+1)+2$, and $w$ is uniquely determined by $\ell(w)$ and $\ld(w)$. 
Note that $\ld(w)=\rd(w)$ if and only if $h=0$. Moreover, if $w\in \zz(\D)$, then \[ f_\bullet(w)+f_\circ(w)=2k+h+1\,\, \mbox{ and }\,\, \left|f_\bullet(w)-f_\circ(w)\right|\le 1.\] Observe also that $w\in \zz(\D)$ if and only if $w^{-1}\in \zz(\D)$.

\begin{Definition}
\label{weakzigzag}
An element $w \in \fc(\D)$ is called a \textit{weak zigzag} if it is a factor of a complete zigzag and $w\notin \TTC(\D)$. 
\end{Definition}

The family of weak zigzags given in Definition \ref{weakzigzag} does not coincide neither with the zigzags introduced in \cite[Definition 3.1]{BJNFC}, nor with the pseudo zigzags defined in \cite[Definition 1.8]{BFS}.
Note that $w$ is a weak zigzag if and only if $w^{-1}$ is a weak zigzag. Moreover, a weak zigzag $w$ is irreducible if and only if $w$ is a complete zigzag.

\begin{Example}
Some complete zigzags in $\fc(\widetilde{D}_6)$ are:
\begin{enumerate}
    \item $w_1=(s_0s_1)s_2s_3s_4(s_5s_6)$;
    \item $w_{2}=(s_5s_6)s_4s_3s_2(s_0s_1)s_2s_3s_4(s_5s_6)$;
    \item $w_3=(s_0s_1)s_2s_3s_4(s_5s_6)s_4s_3s_2(s_0s_1)s_2s_3s_4(s_5s_6)$.
\end{enumerate}
While some weak zigzags in $\fc(\widetilde{D}_6)$ are:
\begin{enumerate}
    \item $v_1=(s_0s_1)s_2s_3$, which is a prefix of $w_1$;
    \item $v_2=s_2(s_0s_1)s_2s_3s_4(s_5s_6)$, which is a suffix of $w_{2}$;
    \item $v_3=s_3s_4(s_5s_6)s_4s_3s_2(s_0s_1)s_2s_3s_4s_6$, which is a factor of $w_{3}$.
\end{enumerate}
\end{Example}

\begin{Definition}
\label{caramella}
Let $ w \in \fc(\D) $, $ n $ even, and let $ w = u_0 \cdots u_m $ be its CFNF with $ m \ge 2 $. We say that $ w $ is a \textit{candy} if $ m $ is even and the following conditions hold:
\begin{itemize}
    \item $ supp(u_{2i}) = \{s_3, s_5, \ldots, s_{n-1}\} \cup \left\{ x_i, y_i \right\} $, $ x_i \in \left\{s_0, s_1 \right\} $ and $ y_i \in \left\{ s_{n+2}, s_{n+1} \right\} $ such that $ x_i \neq x_{i+1} $ and $ y_i \neq y_{i+1} $, for every $ 0 \le i \le \frac{m}{2} $;
    \item $ supp(u_{2i+1}) = \{s_2, s_4, \ldots, s_n\} $, for every $ 0 \le i < \frac{m}{2} $.
\end{itemize}

\end{Definition}
Denote by $ \car(\D) \subset \fc(\D) $ the set of candy elements. We set $\car (\D)=\emptyset$ when $n$ is odd. All candy elements are irreducible. 
 
\begin{figure}[h!]
    \centering    
    \includegraphics[width=0.45\linewidth]{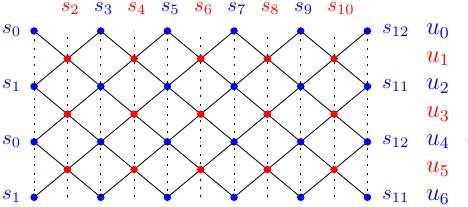}
    \caption{Example of a heap of a candy element in $\fc(\widetilde{D}_{12})$.}
    \label{candyfig}
\end{figure}

Any $w\in \car(\D)$ is uniquely determined by $ x_0, y_0 $ and $ m $; furthermore we have that $\ld(w)=\rd(w)$ if and only if $m/2$ is even. 
Moreover, $f_\bullet(w)=f_\circ(w)=0$. Observe that $w\in \car (\D)$ if and only if $w^{-1}\in \car (\D)$.

\begin{Lemma}
\label{lemma:condzigzag}
Let $w\in \fc(\D)$ be such that:
\begin{align*}
\ld(w) &= \left\{ s_0, s_1 \right\} \text{ or } \ld(w) = \left\{ s_{n+2}, s_{n+1} \right\}\\
(\mbox{respectively, } \rd(w) &= \{ s_0, s_1 \} \text{ or } \rd(w) = \{ s_{n+2}, s_{n+1}\}) .
\end{align*}
Then either $w\in \TTC(\D)$ or $w$ is a weak zigzag. Moreover, $w$ is left (respectively, right) irreducible. 

\end{Lemma}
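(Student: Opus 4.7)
The plan is to treat the left statement with $\ld(w)=\{s_0,s_1\}$; the case $\ld(w)=\{s_{n+1},s_{n+2}\}$ is symmetric via the obvious automorphism of the Coxeter graph of $\D$ that exchanges the two ``ends'', and the right statements follow from Remark \ref{oss}(2), since both $\TTC(\D)$ and the class of weak zigzags are stable under inversion.

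The heart of the proof is a constraint propagation on the Cartier--Foata normal form $w=u_0u_1\cdots u_m$ provided by Theorem \ref{normalform}. Condition (a) forces $u_0=s_0s_1$, so if $m=0$ then $w\in\TTC(\D)$. Otherwise condition (d) forces $supp(u_1)$ to lie in the common neighborhood of $\{s_0,s_1\}$, which is exactly $\{s_2\}$, giving $u_1=s_2$. For $u_2$, condition (d) combined with the braid-avoidance criterion of Proposition \ref{caratterizzazionefc} excludes $s_0$ and $s_1$ (either would produce the factor $s_0s_2s_0$ or $s_1s_2s_1$ after a harmless commutation inside $u_0$), leaving $u_2=s_3$. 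Iterating yields $u_k=s_{k+1}$ for $1\leq k\leq n-1$. At the turning point $u_n$ one has $supp(u_n)\subseteq\{s_{n+1},s_{n+2}\}$; if $u_n$ is a single generator and $m>n$, then $u_{n+1}$ would be forced to contain $s_n$, producing the forbidden braid $s_ns_{n+1}s_n$ or $s_ns_{n+2}s_n$, hence $m=n$ in that case; if $u_n=s_{n+1}s_{n+2}$, then $s_ns_{n+1}s_{n+2}s_n$ is easily checked to be FC, so $u_{n+1}=s_n$ is allowed and the zigzag reverses. The symmetric analysis then gives $u_{n+k}=s_{n-k+1}$ for $1\leq k\leq n-1$, with the analogous single/double dichotomy at $u_{2n}$. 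By induction on $m$, the element $w$ has a reduced expression that is a prefix of $s_0s_1(AB)^kA^h$ for suitable $k\geq 0$ and $h\in\{0,1\}$; consulting Definitions \ref{def:zigzag} and \ref{weakzigzag}, this means $w\in\TTC(\D)$ (when $m=0$) or $w$ is a weak zigzag.

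For left irreducibility, suppose $w\red_t sw$ for some $s\in\ld(w)=\{s_0,s_1\}$ and $t\in S$ with $m_{s,t}\geq 3$. Since the only generator adjacent to $\{s_0,s_1\}$ is $s_2$, the only candidate is $t=s_2$, so we must have $s_2\in\ld(sw)$. If $w=s_0s_1$, then $sw\in\{s_0,s_1\}$ and clearly $s_2\notin\ld(sw)$. If $w$ is a weak zigzag, the CFNF analysis above shows $w$ starts with $s_0s_1s_2$, so $sw$ admits a reduced expression beginning with $\widetilde{s}s_2\cdots$, where $\{s,\widetilde{s}\}=\{s_0,s_1\}$; since $\widetilde{s}$ and $s_2$ are adjacent they do not commute, and no later occurrence of $s_2$ in the expression can be brought to the front because each intermediate letter in the chain is adjacent to its neighbor. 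Hence $\ld(sw)=\{\widetilde{s}\}$ and $s_2\notin\ld(sw)$, a contradiction. Thus $w$ is left irreducible.

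The main obstacle is the meticulous CFNF analysis, particularly the verification at each turning point that no exotic supports are possible and that the only legitimate branching is between a single generator and the double generator $s_{n+1}s_{n+2}$ (resp.\ $s_0s_1$); once the zigzag structure is pinned down, the descent computation yielding irreducibility is routine.
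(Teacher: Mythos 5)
Your proposal is correct and follows essentially the same route as the paper: both arguments propagate the constraints of the Cartier--Foata normal form from $u_0=s_0s_1$ down the chain to show $w$ is a prefix of a complete zigzag (hence in $\TTC(\D)$ or a weak zigzag), and then read off left irreducibility from the fact that the only neighbour of $\{s_0,s_1\}$ is $s_2$, which is never a left descent of $s_\bullet w$. Your write-up is merely more explicit than the paper's at the turning points and in the final descent computation.
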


\begin{proof}
Without loss of generality suppose that $\ld(w) = \left\{ s_0, s_1 \right\}$. Suppose $w\notin \TTC(\D)$ and let $w=u_0\cdots u_m$ be its CFNF, so $m\ge 1$. If $supp(u_i)\ne \emptyset$ then $u_i=s_{i+1}$ for $1\le i\le n-1$ or $supp(u_i)\subseteq \{s_{n+1}, s_{n+2}\}$ for $i=n$. Therefore by CFNF and fully commutativity we have: \[w=\begin{cases}
    s_0s_1s_2\cdots s_{m}s_{m+1}, & m<n, \\ s_0s_1s_2\cdots s_n u_n, & m=n \mbox{ and } u_n\in \{s_{n+1},s_{n+2}\}, \\ s_0s_1s_2\cdots s_ns_{n+1}s_{n+2}P, & m\ge  n \mbox{ and } P\in \pr((\fzs \fzp)^k)\mbox{, } k\ge 1
    \end{cases}\] 
 where $A$ and $B$ are defined in \eqref{eq:AB}. In conclusion, either $w\in \TTC(\D)$ or $w$ is a weak zigzag, and in both cases $w$ is left irreducible.
\end{proof}

\begin{Corollary}
\label{corollary:precompzigzag}
Let $w\in \fc(\D)$. 
\begin{itemize}
    \item[(a)] If $x\in \pr(w)\cup \su(w)$ such that $x\in \zz(\D)$, then $w$ is a weak zigzag. 
    \item[(b)] If $w$ is reducible to $v\in \zz(\D)$, then $w$ is a weak zigzag. 
\end{itemize}

\end{Corollary}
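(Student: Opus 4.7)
The plan is to prove part (a) first and then apply it as the main tool for part (b).

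For part (a), let $x \in \zz(\D)$ be a prefix of $w$; the suffix case follows by the analogous argument on $\rd$ instead of $\ld$. The core step is to establish $\ld(w) = \ld(x)$. The inclusion $\ld(x) \subseteq \ld(w)$ is immediate from the fact that $x$ is a prefix of $w$. For the reverse inclusion, given any $r \in \ld(w)$, a reduced expression of $w$ starting with $r$ must be obtainable from the concatenation of reduced expressions of $x$ and $y$ (where $w = xy$ is the reduced factorization) by commutations alone. The key observation is that a complete zigzag has full support $S$, since $k+h \ge 1$ forces either the entire factor $A$ or the entire factor $B$ to appear in its reduced expression, and together with the initial pair $\{s_0, s_1\}$ or $\{s_{n+1}, s_{n+2}\}$ this covers every generator of $\D$. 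Hence $r$ already occurs inside $x$, and as $r$ cannot commute past another copy of itself, the occurrence brought to the front by commutations must lie in the $x$-portion, forcing $r \in \ld(x)$. Once $\ld(w) = \ld(x) \in \{\{s_0, s_1\}, \{s_{n+1}, s_{n+2}\}\}$ is in hand, Lemma \ref{lemma:condzigzag} yields $w \in \TTC(\D)$ or $w$ a weak zigzag. The former is ruled out because $supp(w) \supseteq supp(x) = S$ contains non-commuting generators (for instance $s_0$ and $s_2$), so $w$ is a weak zigzag.

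For part (b), I would induct on the length $l$ of the reduction sequence $w = w_0 \red w_1 \red \cdots \red w_l = v$. The base case $l = 0$ is immediate, since a complete zigzag is by definition a factor of itself and its full support contains non-commuting generators, so it satisfies Definition \ref{weakzigzag}. For the inductive step, applying the inductive hypothesis to $w_1 \red w_2 \red \cdots \red v$ gives that $w_1$ is a weak zigzag, say a factor of some complete zigzag $z$ with reduced decomposition $z = a w_1 b$. Assume WLOG that $w \red w_1$ is a left reduction, so $w = s w_1$ with $s \in \ld(w)$ and with some $t \in \ld(w_1)$ adjacent to $s$. When $w_1$ itself lies in $\zz(\D)$, part (a) applied to the suffix $w_1$ of $w$ finishes the proof. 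More generally, whenever the reduced expression $z = a w_1 b$ can be chosen so that $a$ ends with $s$, then $z$ already exhibits $w = s w_1$ as a factor.

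The main obstacle is the remaining case, where $w_1$ is a proper weak zigzag and no reduced expression of $z$ places $s$ immediately before $w_1$. Here I would prove the companion claim that weak zigzags are closed under valid left or right inverse reductions. Given $w_1 = t \cdot (\cdots)$ with $s$ adjacent to $t$ and $s w_1 \in \fc(\D)$, the task is to exhibit a possibly longer complete zigzag $z'$ containing $s w_1$ as a factor. The natural construction walks one step outward along the zigzag pattern of $z$: the entry letter $t$ marks the start of an ``ascending'' or ``descending'' strand of $z$, and prepending $s$ extends this strand by one step in the Coxeter graph of $\D$. One then completes the extended path to a full complete zigzag, possibly switching between the two zigzag types $s_0 s_1 (AB)^k A^h$ and $s_{n+2} s_{n+1} (BA)^k B^h$ as prescribed by the new endpoint. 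I anticipate that verifying, via case analysis on the position of $w_1$ within $z$ and on which inverse reduction is performed, that this extension is always possible will be the most technical bookkeeping step of the proof.
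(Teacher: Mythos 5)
Your part (a) is correct and is essentially the paper's own argument: both proofs come down to showing $\ld(w)=\ld(x)$ (the paper deduces this from $\ld(y)\subseteq\{s_2\}$, you from the fact that $x$ has full support; both work) and then invoke Lemma \ref{lemma:condzigzag} and rule out $\TTC(\D)$.

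Part (b) contains a genuine gap. Your induction rests on the companion claim that weak zigzags are closed under inverse star reductions, i.e.\ that if $w\red_t sw=w_1$ is a valid reduction and $w_1$ is a weak zigzag, then $w$ is a weak zigzag. This claim is false. Take $w_1=s_2s_0$: it is a weak zigzag, since $s_{n+2}s_{n+1}s_n\cdots s_2s_1s_0=(s_{n+2}s_{n+1}s_n\cdots s_3)(s_2s_0)(s_1)$ exhibits it as a factor of a complete zigzag and $s_2s_0\notin\TTC(\D)$. The element $w=s_1s_2s_0$ is fully commutative and $w\red_{s_2}s_1w=w_1$ is a valid star reduction ($s_1\in\ld(w)$, $s_2\in\ld(s_1w)$, $m_{s_1,s_2}=3$). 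Yet $w=s_1s_2s_0$ is not a factor of any complete zigzag: in every complete zigzag each occurrence of $s_2$ has its neighbouring $s_0$ and $s_1$ on the same side in the heap, whereas in $s_1s_2s_0$ the $s_1$ precedes the $s_2$ and the $s_0$ follows it. So $w$ is not a weak zigzag. (This $w$ does not contradict the corollary itself, because it reduces to $s_0\in\TTC(\D)$ rather than to a complete zigzag.) The inductive hypothesis you chose discards exactly the information that is needed, namely that $w_1$ reduces all the way down to a complete zigzag; consequently the step you flagged as ``technical bookkeeping'' cannot be completed as stated. The paper avoids the induction altogether: a reduction sequence from $w$ to $v$ exhibits $v$ as a factor, $w=v'vv''$ reduced, and part (a) applied to the prefix $v'v$ (which has $v\in\zz(\D)$ as a suffix) and to the suffix $vv''$ (which has $v$ as a prefix) shows both are weak zigzags, which then glue along the full-support element $v$ into a factor of a single complete zigzag. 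If you prefer to keep an induction, you must strengthen the hypothesis to ``$w_1$ is a weak zigzag \emph{and} is reducible to a complete zigzag'', which is the property that actually propagates.
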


\begin{proof}
Without loss of generality, we assume $w=xy$ reduced product and $\rd(x)=\{s_0,s_1\}$. Since $x\in \zz(\D)$, $\ld(y)\subseteq \{s_2\}$. Hence, $\ld(x)=\ld(w)$ and $w$ is a weak zigzag since $w\notin \TTC(\D)$, and by Lemma \ref{lemma:condzigzag}, so (a) is proved.

Now assume that $w$ is reducible to $v\in\zz(\D)$, then we can factorize $w$ as $w=v'vv''$ with $\ell(w)=\ell(v')+\ell(v)+\ell(v'')$. 
By part (a), both $v'v$ and $vv''$ are weak zigzags. Since $w\in \fc(\D)$  and the factorization is length-additive, it follows that 
$w$ itself is a weak zigzag.
\end{proof}

\begin{Lemma}
\label{lemma:occorrenzeuno}
Let $w\in \fc(\D)$ left irreducible and not a weak zigzag. 
\begin{enumerate}
    \item[(a)] If $f_\bullet(w)\ne 0$, then $s_0,s_1\in \ld(w)$ and $f_\bullet(w)=1$;
    \item[(b)] If $f_\circ(w)\ne 0$, then $s_{n+1},s_{n+2}\in \ld(w)$ and $f_\circ(w)=1$.
\end{enumerate}
\end{Lemma}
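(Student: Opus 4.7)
Part (b) follows from (a) by the symmetry exchanging $s_0, s_1, s_2$ with $s_{n+2}, s_{n+1}, s_n$, so I focus on (a). Write the CFNF $w = u_0 \cdots u_m$ and let $j^*$ be the smallest index such that $\{s_0, s_1\} \subseteq \mathrm{supp}(u_{j^*})$; such an index exists because $f_\bullet(w) \neq 0$. Once I show $j^* = 0$, the equality $f_\bullet(w) = 1$ is immediate: the commuting family $u_0$ contains each generator at most once, and (by the very argument that will rule out $j^* \ge 1$) no other layer can host the pair.

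Assume for contradiction $j^* \geq 1$. By Theorem~\ref{normalform}(d) applied to $s_0 \in \mathrm{supp}(u_{j^*})$, the unique non-commuting neighbor $s_2$ of $s_0$ must lie in $\mathrm{supp}(u_{j^* - 1})$. I form the prefix
\[
z := u_0 u_1 \cdots u_{j^* - 1} \cdot s_0 s_1
\]
of $w$, obtained by pulling the commuting pair $s_0 s_1$ out of $u_{j^*}$. Tracking heights in the heap, the CFNF of $z$ is $u_0 \cdots u_{j^* - 1} \cdot \{s_0, s_1\}$, so $\rd(z) = \{s_0, s_1\}$; moreover $z$ contains the non-commuting pair $\{s_0, s_2\}$, forcing $z \notin \TTC(\D)$. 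Applying Lemma~\ref{lemma:condzigzag} to $z^{-1}$ (for which $\ld(z^{-1}) = \rd(z) = \{s_0, s_1\}$) then forces $z^{-1}$, and hence $z$, to be (the inverse of) one of the three explicit words listed in the proof of Lemma~\ref{lemma:condzigzag}.

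I finish by a case analysis on $\ld(z) = \ld(w)$, which those forms show is either a singleton $\{s_l\}$ or one of the pairs $\{s_0, s_1\}$, $\{s_{n+1}, s_{n+2}\}$. In the singleton case, $u_0 = \{s_l\}$; since $f_\bullet(w) \neq 0$ forces $w \neq s_l$, the layer $u_1$ is nonempty and by Theorem~\ref{normalform}(d) every element of $u_1$ is a neighbor of $s_l$ uniquely supported by $s_l$ in $u_0$, so removing $s_l$ lifts such an element into $\ld(s_l w)$ and produces a left reduction $w \red s_l w$, contradicting left irreducibility. In the paired cases, the regrouping identities $A \cdot (BA)^\ell \cdot B = (AB)^{\ell+1}$ and $A \cdot (BA)^\ell = (AB)^\ell \cdot A$, combined with the involution $w \mapsto w^{-1}$ on $\zz(\D)$, show that $z$ is either $s_0 s_1 \in \TTC(\D)$ (forcing $j^* = 0$, contradicting $j^* \geq 1$) or a complete zigzag in $\zz(\D)$; in the latter case Corollary~\ref{corollary:precompzigzag}(a) applied to $z \in \pr(w)$ yields that $w$ is a weak zigzag, contradicting the hypothesis. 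The main obstacle is the paired-descent case, where upgrading Lemma~\ref{lemma:condzigzag}'s weak-zigzag conclusion to a genuinely complete one is exactly where the regrouping identities above are used.
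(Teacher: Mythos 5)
Your overall strategy mirrors the paper's: isolate a prefix of $w$ ending at the $s_0s_1$ pair, use Lemma~\ref{lemma:condzigzag} to force it to be either $s_0s_1$ or a complete zigzag, and then contradict either left irreducibility or (via Corollary~\ref{corollary:precompzigzag}) the assumption that $w$ is not a weak zigzag. However, there is a genuine gap at the pivotal step. For $z:=u_0\cdots u_{j^*-1}\cdot s_0s_1$ you assert $\rd(z)=\{s_0,s_1\}$ on the grounds that the last Cartier--Foata layer of $z$ is $\{s_0,s_1\}$. That inference is false: the last CFNF layer is contained in the right descent set but need not equal it. Every generator in $\mathrm{supp}(u_{j^*-1})$ other than $s_2$ commutes with both $s_0$ and $s_1$ and is therefore also a right descent of your $z$ (for instance $z=(s_2s_5)(s_0s_1)$ has $\rd(z)=\{s_0,s_1,s_5\}$), and nothing in your argument rules out $\mathrm{supp}(u_{j^*-1})\supsetneq\{s_2\}$ --- knowing that would essentially require the conclusion you are trying to prove. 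Without the exact equality $\rd(z)=\{s_0,s_1\}$, Lemma~\ref{lemma:condzigzag} cannot be applied to $z^{-1}$, and the entire case analysis that follows collapses.

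The repair is exactly the paper's choice of prefix: factor $w=vv'$ with $\rd(v)$ \emph{equal} to $\{s_0,s_1\}$, i.e.\ take $v$ to be the order ideal of the heap generated by the two relevant occurrences rather than the full layers $u_0,\dots,u_{j^*-1}$. The paper then concludes faster: such a $v$ is right irreducible by Lemma~\ref{lemma:condzigzag} and left irreducible because it is a prefix of the left irreducible $w$, so if $v\notin\TTC(\D)$ it is a complete zigzag, and Corollary~\ref{corollary:precompzigzag} contradicts the hypothesis; hence $v=s_0s_1$. Note that after this repair your identity $\ld(z)=\ld(w)$ may fail (the ideal need not contain all of $u_0$), so in your singleton case you should instead argue that $z$ itself is left reducible and hence so is $w$. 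Two smaller points: the existence of $j^*$ (that incomparable occurrences of $s_0$ and $s_1$ necessarily lie in the same CFNF layer) is true but deserves a justification, and the paper obtains $f_\bullet(w)=1$ cleanly by choosing the factorization so that $f_\bullet(v')=0$, rather than by re-running the layer argument.
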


\begin{proof}
We prove (a), since (b) is similar. 
We have that $w$ can be factorized as a reduced product $w=vv'$ with $\rd(v)=\{s_0,s_1\}$ and $f_\bullet(v')=0$. Assume $v\notin \TTC(\D)$, then by Lemma \ref{lemma:condzigzag} we have that $v$ is a weak zigzag and it is right irreducible. Moreover, since $w$ is left irreducible and $v\in \pr(w)$, it follows that $v$ is a complete zigzag. But this is a contradiction by Corollary \ref{corollary:precompzigzag}, because $w$ is not a weak zigzag. Therefore, $v\in \TTC(\D)$, so $v=s_0s_1$ and $s_0,s_1\in \ld(w)$. Moreover, since $f_\bullet(v')=0$, it follows that $f_\bullet(w)=1$. 
\end{proof}

\begin{Lemma}
\label{secondlemmanormalform}
Let $w\in \fc(\D)$, $w=u_0\cdots u_m$ be its CFNF, and $2\le i\le n$. Let $k\ge1$ be the minimum index such that $s_i\in supp(u_k)$.\\
If $f_\bullet(w)=0$, then
\begin{enumerate}
    \item[(a)] if $i=2$ and $\s\in supp(u_{k-1})$, then $k=1$;
    \item[(b)] if $i\ge 3$ and $s_{i-1}\in supp(u_{k-1})$, then there exists $z\in \pr(w)$ with $z\in \su(\s s_2\cdots s_{i-1})$ and $\ell(z)=k$.
\end{enumerate}
Similarly, if $f_\circ(w)=0$, then
\begin{enumerate}
    \item[(c)] if $i\le n-1$ and $s_{i+1}\in supp(u_{k-1})$, then there exists $z\in \pr(w)$ with $z\in \su(\sn s_n\cdots s_{i+1})$ and $\ell(z)=k$;
    \item[(d)] if $i=n$ and $\sn \in supp(u_{k-1})$, then $k=1$.
\end{enumerate}
\end{Lemma}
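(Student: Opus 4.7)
My plan:

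The four statements split into two symmetric pairs under the diagram automorphism of $\D$ exchanging $\{s_0,s_1\}$ with $\{s_{n+1},s_{n+2}\}$. I will prove (a) and (b); parts (d) and (c) then follow at once by applying this symmetry. The structural tools throughout are CFNF property (d) of Theorem \ref{normalform}, the minimality of $k$, and the equivalent reformulation of $f_\bullet(w) = 0$: no block $u_j$ of the CFNF contains both $s_0$ and $s_1$ simultaneously, since otherwise $s_0s_1$ would be a factor of $u_j$ hence of $w$.

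Part (a) is a one-step contradiction. Assume $k \ge 2$. Then $k-1 \ge 1$, so CFNF(d) applied to $\s \in supp(u_{k-1})$ yields some $s \in supp(u_{k-2})$ with $m_{s,\s} \ge 3$. Since $s_2$ is the unique neighbor of $\s$ in the Coxeter graph of $\D$, we obtain $s_2 \in supp(u_{k-2})$, contradicting the minimality of $k$. Hence $k = 1$.

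For part (b), the key observation is that the suffix of $\s s_2 s_3 \cdots s_{i-1}$ of length $k$ is determined up to the choice of $\s$: it must be $z = s_{i-k} s_{i-k+1} \cdots s_{i-1}$ when $k \le i-1$, and $z = \s s_2 \cdots s_{i-1}$ when $k = i$. The assertion therefore reduces to the ``staircase'' claim $s_{i-k+j} \in supp(u_j)$ for every $0 \le j \le k-1$, since reading the first $k$ levels of the CFNF then directly exhibits $z$ as a reduced prefix of $w$. I prove the staircase by descending induction on $j$, starting from the base $j = k-1$, which is the hypothesis. At each inductive step I apply CFNF(d) to $s_{i-k+j+1} \in supp(u_{j+1})$ to obtain a neighbor in $supp(u_j)$; the candidates are $s_{i-k+j}$ and $s_{i-k+j+2}$ (with the extra option $\s$ in the boundary case $i-k+j = 1$, i.e., at the bottom step when $k = i$). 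The candidate $s_{i-k+j+2}$ is ruled out by tracing the chain upward: an early occurrence of $s_{i-k+j+2}$ would, via repeated CFNF(d) applications through the already established staircase, force $s_i$ into some $u_l$ with $l < k$, contradicting the minimality of $k$. The boundary analysis, when the staircase reaches $s_2$ and below, uses $f_\bullet(w) = 0$: it excludes configurations where both $s_0$ and $s_1$ lie in the same block, which would otherwise create an alternative chain through the $\{s_0, s_1\}$-branching at $s_2$ and break the forced descent.

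The principal obstacle is to make the exclusion of the ``upward'' neighbor $s_{i-k+j+2}$ rigorous at every inductive step: one must show that such an early occurrence genuinely cascades back through all higher steps of the established staircase to collide with the minimality of $k$. This rests both on the $\fc$ hypothesis for $w$ (recall Proposition \ref{caratterizzazionefc}: two occurrences of the same generator must be separated by at least two non-commuting letters, forbidding braid factors) and on the $f_\bullet(w) = 0$ hypothesis to block the alternative route through the $\{s_0,s_1\}$-branching of the Coxeter diagram.
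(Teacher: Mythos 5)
Your parts (a) and (d) coincide with the paper's one\--line argument, and the reduction of (c),(d) to (a),(b) by the graph automorphism is the same symmetry the paper invokes. For part (b), however, your route genuinely differs from the paper's. The paper does not track memberships $s_{i-k+j}\in supp(u_j)$ inside the blocks of $w$; instead it sets $w_{k-1}=u_0\cdots u_{k-1}$, factors $w_{k-1}=zv$ with $\rd(z)=\{s_{i-1}\}$, $s_i\notin supp(z)$ and $s_{i-1}\notin supp(v)$, and examines the CFNF of $z^{-1}$. Because $\ld(z^{-1})=\{s_{i-1}\}$ is a singleton and $s_i\notin supp(z)$, Theorem \ref{normalform}(d) forces every block of $z^{-1}$ to be the \emph{single} generator one step down the chain, so ``turning back'' is instantly a braid factor $s_js_{j-1}s_j$ (with $f_\bullet(w)=0$ needed only once, to prevent the block at the bottom from being $s_0s_1$); then $\ell(z)=k$ comes for free from Lemma \ref{lemmaunitarissimo}. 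This is what your plan is missing: by working inside the full blocks $u_j$ of $w$, the exclusion of the upward neighbour $s_{i-k+j+2}$ is no longer a one\--line braid violation but requires the ``cascade'' you describe (two occurrences of a generator in $u_j$ and $u_{j+2}$ force \emph{both} of its neighbours into $u_{j+1}$, and this propagates up to $s_i\in supp(u_{k-2})$, contradicting minimality of $k$). That cascade is the entire technical content of part (b) and you only announce it as ``the principal obstacle''; it must actually be carried out, including at the bottom boundary where $f_\bullet(w)=0$ is what forbids $\{s_0,s_1\}\subseteq supp(u_{j+1})$.

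Two further concrete points. First, the staircase $s_{i-k+j}\in supp(u_j)$ does \emph{not} ``directly exhibit $z$ as a reduced prefix'': to pull $s_{i-k+1}$ to the front of $s_{i-k}w$ you must also know $s_{i-k+2}\notin supp(u_0)$ (and similarly at every level), which is yet another instance of the same cascade, or an appeal to Lemma \ref{lemmaunitarissimo} as in the paper. Second, your case split is off by one: $\s s_2\cdots s_{i-1}$ has length $i-1$, so the extreme case is $k=i-1$ (giving $z=\s s_2\cdots s_{i-1}$), while $k=i$ cannot occur; part of what the proof must establish is precisely that $k\le i-1$. None of these issues is fatal \--- the plan can be completed with the tools you cite \--- but as written the argument for (b) is a programme rather than a proof, and the paper's extraction of $z$ first is the more economical way to organize it.
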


\begin{proof}
We prove (a) and (b), since (c) and (d) are analogous. 

Set $i=2$ and suppose $\s \in supp(u_{k-1})$. If $k\ge 2$, then $s_2\in supp(u_{k-2})$ by Theorem \ref{normalform}(d), but this is a contradiction. Thus, $k=1$.
 
Set $i\ge 3$, assume $s_{i-1}\in supp(u_{k-1})$, and let $w_{k-1}:=u_0\cdots u_{k-1}$. Since $s_{i-1}\in \rd(w_{k-1})$, it is possible to factorize $w_{k-1}$ as a reduced product $w_{k-1}=zv$, with $\rd(z)=\{s_{i-1}\}$, $s_{i}\notin supp(z)$, and $s_{i-1}\notin supp(v)$. Let $z^{-1}=u'_0\cdots u'_l$ be its CFNF, then $u'_0=s_{i-1}$. If $supp(u_1')\ne \emptyset$, then $u_1'=s_{i-2}$ by $s_i\notin supp(z)$ and Theorem \ref{normalform}(d). In general, if $supp(u_{j}')\ne \emptyset$ with $i-1-j\ge 1$, we have that: 
    \[u'_j=\begin{cases}
    s_{i-1-j}, & i-1-j\ge 2; \\ \s, & i-1-j=1.
    \end{cases}\] 
    If $l\ge i-1$, then $u_{i-2}'=\s$ and $u'_{i-3}=u_{i-1}'=s_2$, that is a contradiction. Thus, $l\le i-2$. Therefore, $z^{-1}$ is a prefix of $s_{i-1}\cdots s_2\s$, thus $z$ is a suffix of $\s s_2 \cdots s_{i-1}$. Furthermore, since $z$ is prefix of $w_{k-1}$, $s_{i-1}\in supp(u_{k-1})$ and $s_{i-1}\notin supp(v)$, by Lemma $\ref{lemmaunitarissimo}$, $\ell(z)=k$.
 
\end{proof}

\begin{Example}
\label{exseclemmanf}
Let $w=u_0u_1u_2u_3\in \fc(\widetilde{D}_{11})$ with $u_0=s_0s_6s_8s_{10}s_{11}$, $u_{1}=s_2s_5s_7s_9$, $u_2=s_3s_6s_8$, and $u_3=s_4s_7$ its CFNF. Then $f_\bullet(w)=0$, $s_4\in supp(u_3)$, $s_4\notin supp(u_{0}u_1u_2)$, and $s_3\in supp(u_2)$. Note that there exists a prefix $z$ of $w$ such that $\rd(z)=\{s_3\}$: \[w=\underbrace{s_0s_2s_3}_{z}s_6s_8s_{10}s_{11}s_5s_7s_9s_6s_8s_4s_7.\] Observe that $z$ is of the form described in Lemma $\ref{secondlemmanormalform}$ and $\ell(z)=3$. 
\end{Example}

\begin{Lemma}
\label{cornormalform}
Let $w\in \fc(\D)$ and $w=u_0\cdots u_m$ be its CFNF, the following hold.
\begin{enumerate}
    \item[(a)] Let $k$ be the minimum index such that $supp(u_k)\cap \{s_0,s_1\}\ne \emptyset$. If $f_\circ(w)=0$, then there exists $z\in \pr(w)$ with $z\in \su (\sn s_n\cdots s_2\s)$ and $\ell(z)=k+1$.
    \item[(b)] Let $k$ be the minimum index with $supp(u_k)\cap \{s_{n+1},s_{n+2}\}\ne \emptyset$. If $f_\bullet(w)=0$, then there exists $z\in \pr(w)$ with $z\in \su(\s s_2\cdots s_n\sn)$ and $\ell(z)=k+1$.
\end{enumerate}
\end{Lemma}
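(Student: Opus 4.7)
We prove part (a); part (b) will follow by the left--right symmetry of the Coxeter graph of $\D$, with Lemma \ref{secondlemmanormalform}(b) in place of (c). The main idea is to apply Lemma \ref{secondlemmanormalform}(c) with $i = 2$ and then extend the resulting prefix on the right by $s_2 s_\bullet$.

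We first dispose of the low values of $k$. For $k = 0$, $z = s_\bullet$ works since $s_\bullet \in \ld(w)$. For $k = 1$, property (d) of the CFNF forces $s_2 \in \ld(w)$ (as $s_2$ is the unique neighbour of $s_\bullet$ in the Coxeter graph of $\D$), and the $\TTC$-structure of $u_0$ and $u_1$ verifies that $z = s_2 s_\bullet$ is a reduced prefix of $w$ of length $2$.

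For the main case $k \geq 2$, CFNF property (d) yields $s_2 \in supp(u_{k-1})$. Let $k_2$ be the minimum index with $s_2 \in supp(u_{k_2})$, so $k_2 \leq k - 1$. The crucial intermediate claim is that $k_2 = k - 1$. Suppose for contradiction $k_2 < k - 1$: then $s_2$ occurs in both $u_{k_2}$ and $u_{k-1}$, while $k$-minimality rules out $s_0, s_1$ in the intermediate blocks. Iterating CFNF (d) at each intermediate level (and using each time the exclusion of $s_0, s_1$) will force an $s_3$ between the two $s_2$-levels, and the $\TTC$-structure of the involved blocks will then allow us to rearrange a reduced expression of $w$ to exhibit the braid factor $s_2 s_3 s_2$, contradicting $w \in \fc(\D)$ by Proposition \ref{caratterizzazionefc}. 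With $k_2 = k-1$ in hand, CFNF (d) applied to $s_2 \in supp(u_{k_2})$, combined with the exclusion of $s_0, s_1$ from $supp(u_{k_2 - 1})$, gives $s_3 \in supp(u_{k_2 - 1})$, so that the hypotheses of Lemma \ref{secondlemmanormalform}(c) are satisfied for $i = 2$: this produces a prefix $z' \in \pr(w)$ with $\ell(z') = k - 1$ and $z' \in \su(\sn s_n \cdots s_3)$.

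Writing $w = z' \tilde v$ as a reduced product, we will check that the generators of $u_0, \ldots, u_{k-2}$ not used by $z'$ all commute with both $s_2$ and $s_\bullet$ (again by the $k$-minimality of $s_0, s_1$ together with the explicit staircase shape of $z'$ coming from Lemma \ref{secondlemmanormalform}(c)); consequently $s_2 s_\bullet \in \pr(\tilde v)$, and $z := z' s_2 s_\bullet$ is a reduced prefix of $w$ of length $k + 1$ which is a suffix of $\sn s_n \cdots s_2 s_\bullet$, as required. The delicate point will be the FC-violation argument forcing $k_2 = k - 1$: after exhibiting the forced $s_3$ at the intermediate level, one still has to verify that the commuting blocks of the CFNF actually allow a rearrangement producing the factor $s_2 s_3 s_2$; once this is settled, the rest is routine bookkeeping with CFNF property (d) and Lemma \ref{secondlemmanormalform}.
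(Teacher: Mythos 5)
Your overall route is genuinely different from the paper's: you reduce part (a) to Lemma \ref{secondlemmanormalform}(c) applied at $i=2$ and then splice $s_2\s$ onto the resulting prefix, whereas the paper works directly with $w_k=u_0\cdots u_k$, extracts the minimal prefix $z$ with $\rd(z)=\{\s\}$, shows that the CFNF of $z^{-1}$ is the staircase $\s,s_2,s_3,\ldots$ (using $f_\circ(w)=0$ to stop it at level $n$), and gets $\ell(z)=k+1$ from Lemma \ref{lemmaunitarissimo}. Your reduction is a reasonable idea, and the $k=0,1$ cases and the final splicing step are fine in outline.

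However, there is a genuine gap at your ``crucial intermediate claim'' $k_2=k-1$. Your sketch argues that the intermediate levels force \emph{an} $s_3$ between the two occurrences of $s_2$, and that this yields the braid factor $s_2s_3s_2$. But full commutativity only forbids \emph{exactly one} non-commuting letter between consecutive occurrences of $s_2$; having \emph{two} occurrences of $s_3$ there is perfectly consistent with $w\in\fc(\D)$ and produces no braid at that level. Ruling this out requires a cascade: two $s_3$'s between consecutive $s_2$'s force (since $s_2$ cannot reappear there) two $s_4$'s between consecutive $s_3$'s, and so on up to $s_n$, where the only remaining way to separate two consecutive $s_n$'s is by an $s_{n+1}$ and an $s_{n+2}$ — and this forces the factor $s_{n+1}s_{n+2}$, contradicting $f_\circ(w)=0$. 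Your outline never invokes $f_\circ(w)=0$ in establishing $k_2=k-1$, and without it the claim is simply false: for $w=\fzp\fzs$ one has $f_\circ(w)=1$, $s_2$ occurs at levels $0$ and $2n-2$ separated by two $s_3$'s, $k=2n-1$, and no prefix of the required form of length $k+1$ exists. So the FC-violation argument you defer is not ``routine rearrangement''; it is essentially the whole staircase analysis that constitutes the paper's proof, and as sketched your version of it would fail.
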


\begin{proof}
We prove only (a) since (b) is analogous. If $k=0$, then  $\s$ is a prefix of $w$. Otherwise, assume $k\ge 1$ and let $w_k:=u_0\cdots u_k$. Since since $ \s\in supp(u_k)$ and $supp(u_j)\cap \{s_0,s_1\}=\emptyset$ for all $0\le j\le k-1$, it is possible to factorize $w_k$ as a reduced product $w_k=zv$, with $\rd(z)=\{\s\}$ and $\s \notin supp(v)$. Let $z^{-1}=u'_0\cdots u'_l$ be its CFNF, then necessarily $u'_0=\s$ and if $supp(u'_i)\ne \emptyset$ we have that
\[u'_i=\begin{cases}
    s_{i+1}, & 1\le i\le n-1;\\ \sn, & i=n.
    \end{cases}\] 
Since $f_\circ(w)=0$, it follows that $l\le n$. Otherwise if $l>n$, then $u'_{n+1}=s_n$ by Theorem \ref{normalform}(d), but this cannot be because $u_n=\sn$ and $u_{n-1}=s_n$, so $s_n\sn s_n$ would be a factor of $w$. In conclusion, $z^{-1}$ is a prefix of $\s s_2\cdots s_n \sn$, thus $z\in \pr(w_k)\subseteq \pr(w)$ is a suffix of $\sn s_n\cdots s_2 \s$. Furthermore, by Lemma $\ref{lemmaunitarissimo}$, $\ell(z)=k+1$.
\end{proof}

\begin{Example}
\label{excorlemmanf}
Let $w=u_0u_1u_2u_3\in \fc(\widetilde{D}_{10})$ such that $u_0=s_4s_6s_9$, $u_{1}=s_3s_5s_8$, $u_2=s_2s_4s_7s_{10}$, $u_3=s_0s_1s_3s_8$. Then $f_\circ(w)=0$ and $supp(w)\cap \{s_0,s_1\}\ne \emptyset$. Hence there exists a prefix $z$ of $w$ such that $\rd(z)=\{s_0\}$: \[w=\underbrace{s_4s_3s_2s_0}_{z}s_6s_9s_5s_8s_4s_7s_{10}s_1s_3s_8.\] Observe that $z$ is a prefix of $w$ of the form described in Lemma $\ref{cornormalform}$ and $\ell(z)=4$.
\end{Example}

\begin{Lemma}
\label{lemma:caruno}
Let $w\in \I(\D)\setminus \zz(\D)$ with complete support and $w=u_0\cdots u_m$ be its CFNF. The following hold:
\begin{enumerate}
    \item[(a)]  $m\ge 1$;
    \item[(b)]  $f_\bullet(w)=f_\circ(w)=0$;
    \item[(c)]  $\left |supp(u_0)\cap \{s_0,s_1\}\right |=1$;
    \item[(d)]  $\left |supp(u_0)\cap \{s_{n+1},s_{n+2}\}\right |=1$.
\end{enumerate}

\end{Lemma}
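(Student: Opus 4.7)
The plan is to treat the four parts in order, since (b) feeds (c) and (d). For (a), I would observe that if $m = 0$, then $w \in \TTC(\D)$, so its support is a commuting subset of $S$ — equivalently, an independent set in the Coxeter graph of $\D$. Since that graph is connected with at least two vertices, this contradicts the complete support of $w$, and hence $m \ge 1$.

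For (b), I would argue by contradiction. First note that $w$ is not a weak zigzag, since the remark following Definition \ref{weakzigzag} asserts that every irreducible weak zigzag is a complete zigzag, and $w \notin \zz(\D)$. Suppose $f_\bullet(w) \ne 0$; then Lemma \ref{lemma:occorrenzeuno}(a) yields $s_0, s_1 \in \ld(w) = supp(u_0)$ and $f_\bullet(w) = 1$. Applying the same lemma to $w^{-1}$ — which is left irreducible by Remark \ref{oss}(2), satisfies $f_\bullet(w^{-1}) = f_\bullet(w) \ne 0$ by Remark \ref{rem:occirr}(1), and is not a weak zigzag (a property invariant under inversion) — yields $s_0, s_1 \in \rd(w) = supp(u_m)$. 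Using $m \ge 1$ from (a), the CFNF admits a reduced expression of $w$ in which $s_0 s_1$ appears as a factor both at the very start of $u_0$ and at the very end of $u_m$; these two occurrences are distinct, so $f_\bullet(w) \ge 2$, a contradiction. The equality $f_\circ(w) = 0$ follows by the identical argument with $s_{n+1}, s_{n+2}$ in place of $s_0, s_1$.

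For (c), the upper bound $|supp(u_0) \cap \{s_0, s_1\}| \le 1$ is immediate from (b), for otherwise $s_0, s_1$ would commute into a prefix $s_0 s_1$ of $w$, forcing $f_\bullet(w) \ge 1$. For the lower bound, the complete support of $w$ guarantees that some $u_j$ meets $\{s_0, s_1\}$; I would take the minimal such $k$ and apply Lemma \ref{cornormalform}(a), whose hypothesis $f_\circ(w) = 0$ is secured by (b), to obtain $z \in \pr(w)$ with $z \in \su(\sn s_n \cdots s_2 \s)$ and $\ell(z) = k+1$. If $k \ge 1$, a direct inspection of these suffixes shows that the first two letters of $z$ form an adjacent pair of generators — namely $(s_{k+1}, s_k)$ for $2 \le k \le n-1$, $(s_2, \s)$ for $k = 1$, or $(\sn, s_n)$ for $k = n$ — making $w$ left reducible by that pair and contradicting $w \in \I(\D)$. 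Hence $k = 0$, forcing $supp(u_0) \cap \{s_0, s_1\} \ne \emptyset$. The proof of (d) is entirely symmetric, invoking Lemma \ref{cornormalform}(b) together with $f_\bullet(w) = 0$.

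I expect the main subtlety to lie in (b): one must verify that the two occurrences of $s_0 s_1$ produced inside $u_0$ and $u_m$ are genuinely distinct positions, not a single overlap. This is where $m \ge 1$ from part (a) becomes indispensable; a closer look at CFNF condition \ref{normalform}(d) together with the commuting requirement on each level in fact forces $m \ge 2$ in this configuration (since $s_0 \in supp(u_0) \cap supp(u_1)$ would demand $s_2 \in supp(u_0)$, which is incompatible with $s_0 \in supp(u_0)$), comfortably guaranteeing the gap.
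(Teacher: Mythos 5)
Parts (a), (c) and (d) of your argument are essentially the paper's own: (a) is the same observation that $m=0$ would put $w$ in $\TTC(\D)$, contradicting complete support, and in (c)--(d) you invoke Lemma~\ref{cornormalform} exactly as the paper does, with the same dichotomy on $\ell(z)$ (reducible prefix if $\ell(z)\ge 2$, hence $z=\s$). Part (b) also opens the same way, applying Lemma~\ref{lemma:occorrenzeuno} to $w$ and to $w^{-1}$ after checking that $w$ is not a weak zigzag; but your finishing move has a gap.

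The gap is the assertion $\rd(w)=supp(u_m)$, which you use to place a second, disjoint occurrence of the factor $s_0s_1$ inside $u_m$. The Cartier--Foata normal form only guarantees $supp(u_0)=\ld(w)$ (Theorem~\ref{normalform}(a)); there is no dual statement for $u_m$, and the equality is false in general: for $w=s_0s_4s_3$ one has $u_0=s_0s_4$, $u_1=s_3$, so $s_0\in\rd(w)\setminus supp(u_m)$. Without it, knowing $s_0,s_1\in\ld(w)\cap\rd(w)$ does not by itself produce two distinct occurrences of $s_0s_1$: the minimal and maximal occurrences of $s_0$ in the heap may be one and the same vertex (compare $w=s_0s_1s_3$, where $s_0s_1$ is simultaneously a prefix and a suffix yet $f_\bullet(w)=1$). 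Your closing remark about forcing $m\ge 2$ only rules out $s_0\in supp(u_1)$, which is not the issue --- it presupposes that $s_0$ occurs a second time at all. The missing step is precisely what the paper supplies: since $f_\bullet(w)=1$, the prefix and suffix occurrences of the factor $s_0s_1$ must coincide, so the unique occurrence of $s_0$ is both minimal and maximal in $H(w)$, hence incomparable to every other vertex; this forces $s_2\notin supp(w)$, contradicting completeness of the support. (Equivalently: either some occurrence coincides, giving $s_2\notin supp(w)$, or all are distinct, giving $f_\bullet(w)\ge 2$.) With that case analysis inserted, your proof of (b) closes and the whole argument matches the paper's.
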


\begin{proof}

If $m=0$, then $w=u_0$ would be a product of commuting generators and it could not have complete support. Hence, $m\ge 1$ and (a) is proved. 

We prove by contradiction that $f_\bullet(w)=0$; the case with $f_\circ(w)$ is analogous. Assume $f_\bullet(w)\ne 0$, then by applying Lemma $\ref{lemma:occorrenzeuno}$ to $w$ and $w^{-1}$, we have that $s_0,s_1\in \ld(w)\cap \rd(w)$ and $f_\bullet(w)=1$.  This implies that $s_2\notin supp(w)$. But this a contradiction since $supp(w)$ is complete. 

To prove (c), by point (b) it is sufficient to show the following claim: 
$$ supp(u_0)\cap \{s_0,s_1\}\ne \emptyset.$$ 
We have that $f_\circ(w)=0$, and since $s_0,s_1\in supp(w)$, by Lemma $\ref{cornormalform}$, there exists $z\in \pr(w)$ with \[z\in \su(\sn s_n \cdots s_2 \s)\] and $\ell(z)\ge 1$. If $\ell(z)\ge 2$, then $z$ is a left reducible prefix of $w$, but this is a contradiction, since $w\in \I(\D)$. Therefore, $z=\s$, so $supp(u_0)\cap \{s_0,s_1\}\ne \emptyset$. In conclusion, by $f_\bullet(w)=0$, $\left |supp(u_0)\cap \{s_0,s_1\}\right |=1$. The proof of (d) is analogous of (c). 

\end{proof}

\begin{Lemma}
\label{cardue}
Let $w\in \I(\D)\setminus \zz(\D)$ with complete support, and let $w=u_0\cdots u_m$ be its CFNF. Then, the following hold:
\begin{enumerate}
    \item[(a)]  $supp(u_0)=\{\s\} \cup \{s_3, s_5,\ldots, s_{n-1}\} \cup \{\sn\}$;
    \item[(b)] $supp(u_1)= \{s_2, s_4, \ldots, s_n\}$.
\end{enumerate}
In particular, $n$ is even.
\end{Lemma}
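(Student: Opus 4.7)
The plan is to show that complete support forces every $s_i$ with $2 \le i \le n$ to lie in $supp(u_0) \cup supp(u_1)$, and then to exploit the irreducibility of $w$ to pin down which set each generator belongs to. I would begin from Lemma \ref{lemma:caruno}: we have $m \ge 1$, $f_\bullet(w) = f_\circ(w) = 0$, and $supp(u_0)$ contains exactly one element $\s$ of $\{s_0,s_1\}$ and exactly one element $\sn$ of $\{s_{n+1},s_{n+2}\}$. Since $u_0 \in \TTC(\D)$, no two adjacent generators coexist in $supp(u_0)$, so $s_2, s_n \notin supp(u_0)$; a short application of Theorem \ref{normalform}(d) also shows that $\ti$ and $\tn$ cannot appear in any $supp(u_j)$ until after a prior occurrence of $s_2$ or $s_n$, respectively.

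The core step is to prove that $k_i := \min\{k : s_i \in supp(u_k)\}$ satisfies $k_i \le 1$ for every $2 \le i \le n$. Assuming $k_i \ge 2$, by Theorem \ref{normalform}(d) some graph-neighbor of $s_i$ lies in $supp(u_{k_i-1})$, and I would split on which one. If it is $\s$ or $\sn$ (only possible for $i=2$ or $i=n$), Lemma \ref{secondlemmanormalform}(a) or (d) immediately gives $k_i = 1$, contradicting $k_i \ge 2$. If it is a path neighbor $s_{i-1}$ or $s_{i+1}$, Lemma \ref{secondlemmanormalform}(b) or (c) produces a prefix $z$ of $w$ with $\ell(z) = k_i \ge 2$, sitting inside the word $\s s_2 \cdots s_{i-1}$ or $\sn s_n \cdots s_{i+1}$. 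The first two letters of $z$ are then two adjacent generators of the path, so $w$ admits a braid prefix of length two, contradicting $w \in \I(\D)$ (such a prefix is exactly what a left star reduction requires). The observation from the first paragraph rules out $\ti, \tn$ as the invoked neighbor at level $k_i - 1$.

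Setting $P := \{i : 3 \le i \le n-1 \text{ and } s_i \in supp(u_0)\}$, the previous step combined with complete support gives $supp(u_0) = \{\s, \sn\} \cup \{s_i : i \in P\}$ and $supp(u_1) \supseteq \{s_i : 2 \le i \le n,\ i \notin P\}$, while $u_0 \in \TTC(\D)$ forces $P$ to be an independent set in the path $s_3 - \cdots - s_{n-1}$. I would then apply Remark \ref{oss}(4) to each $s_i \in supp(u_1)$: irreducibility of $w$ requires $s_i$ to have at least two graph-neighbors in $supp(u_0)$. For $i = 2$, since $\{s_0,s_1\} \cap supp(u_0) = \{\s\}$, this yields $s_3 \in supp(u_0)$, i.e., $3 \in P$; symmetrically $n-1 \in P$. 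For $4 \le i \le n-2$ with $i \notin P$, both $s_{i-1}$ and $s_{i+1}$ must lie in $supp(u_0)$, so $i-1, i+1 \in P$. Combined with the independence of $P$, consecutive elements of $P$ differ by exactly $2$, forcing $P = \{3, 5, \ldots, n-1\}$; in particular $n$ is even. Parts (a) and (b) then follow directly.

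I expect the main obstacle to be the second paragraph: juggling the path and endpoint neighbors of $s_i$, applying the correct branch of Lemma \ref{secondlemmanormalform}, and verifying that the pseudo-neighbors $\ti$ and $\tn$ cannot be responsible for Theorem \ref{normalform}(d) at the critical level $k_i - 1$.
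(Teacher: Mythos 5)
Your proof is correct, and it rests on the same two pillars as the paper's: Lemma \ref{lemma:caruno} for the endpoints and the vanishing of $f_\bullet,f_\circ$, and Lemma \ref{secondlemmanormalform} together with the ``reducible prefix of length two'' contradiction to force every $s_i$ with $2\le i\le n$ into $supp(u_0)\cup supp(u_1)$. Where you diverge is in how the alternating parity pattern is extracted. The paper builds the parity into the argument from the start: it defines $U_0$ (odd indices missing from $supp(u_0)$) and $U_1$ (even indices missing from $supp(u_1)$), takes the minimal violation $h$, shows $h\in U_1$ via a reducible prefix $s_{h-2}s_{h-1}$ or $\s s_2$, and then kills it with Lemma \ref{secondlemmanormalform} in a single minimality argument. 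You instead run a two-stage argument: first place all of $s_2,\ldots,s_n$ in levels $0$ and $1$, then determine which level each lands in by analyzing the set $P$ of odd-level-zero indices, using Remark \ref{oss}(4) in the form ``every generator in $supp(u_1)$ must have at least two neighbours in $supp(u_0)$,'' together with the independence of $P$ forced by $u_0\in\TTC(\D)$. This costs a little extra combinatorial bookkeeping (the induction showing consecutive elements of $P$ differ by exactly $2$, and the final check that $supp(u_1)$ contains nothing beyond the even-indexed generators, which you leave implicit but which is routine), but it makes the mechanism producing the parity --- and hence the conclusion that $n$ is even --- more transparent than the paper's compressed minimality argument. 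Both proofs are sound; neither is materially shorter than the other.
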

\begin{proof}
By Lemma $\ref{lemma:caruno}$, we have $m\ge 1$, $\s,\sn\in supp(u_0)$, and $f_\bullet(w)=f_\circ(w)=0$. Consider the sets 
\begin{align*}
    U_0&:=\{2j+1 \mid s_{2j+1}\notin supp(u_0), \, 1 \leq 2j+1 \leq n\} \mbox{ and }\\
    U_1&:=\{ 2j \mid s_{2j}\notin supp(u_1), \, 1\le 2j\le n\}.
\end{align*}
Set $h:=\min( U_0\cup U_1)$.
First, observe that $h\in U_1$, otherwise if $h\in U_0$, then by minimality, $s_{h-2}s_{h-1}$ or $\s s_2$, for $h=3$, would be a prefix of $w$, which is not possible. Also, note that $\s \in supp(u_0)$ and if $h> 3$,  $s_{h-1}\in supp(u_0)$ by minimality, hence $s_h\notin supp(u_0)$ since $u_0$ is a product of commuting generators. Thus let $k\geq 2$ be the minimum index such that $s_{h}\in supp(u_k)$. By Lemma $\ref{secondlemmanormalform}$, there exists $z\in \pr(w)$ with
\[z\in \su(\s s_2\cdots s_{h-1}) \quad \mbox{or} \quad z\in \su(\sn s_n\cdots s_{h+1})\] and $\ell(z)=k\ge 2$. In both cases we have a contradiction. In the first one, $z$ cannot be a prefix since $s_{h-1}\in \ld(w)$ by minimality of $h$; in the second, $z$ cannot be a prefix since $w\in \I(\D)$. 
Hence $U_0\cup U_1=\emptyset$ from which the result follows.
\end{proof}

\begin{Remark}
\label{rem:stared}
    In \cite[Theorem 6.3]{GreenStar}, Green shows that Coxeter groups of finite type $D$ are star reducible. This means that $w\in \fc(D_n)$ is irreducible if and only if $w$ is a product of commuting generators. 
    In our setting, if $w\in \fc(\D)$ and $supp(w)$ is not complete, then $w$ can be considered as a FC element or a product of FC elements in a Coxeter group of finite type $D$. Hence, if $w\in \I(\D)$ with not complete support, then $w\in \TTC(\D)$.
\end{Remark}

\begin{Proposition}
\label{cartre}
Let $w\in \I(\D)\setminus \zz(\D)$ with complete support. Then $w\in \car(\D)$. 
\end{Proposition}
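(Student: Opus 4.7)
My plan is to combine the structural information for the two outermost layers of the CFNF given by Lemma \ref{cardue}, applied both to $w$ and to $w^{-1}$, with a layer-by-layer analysis enforced by the fully-commutativity constraint (Proposition \ref{caratterizzazionefc}). For brevity, call a subset of $S$ of \emph{type A} if it has the form $\{x,s_3,s_5,\ldots,s_{n-1},y\}$ for some $x\in\{s_0,s_1\}$ and $y\in\{s_{n+1},s_{n+2}\}$, and of \emph{type B} if it equals $\{s_2,s_4,\ldots,s_n\}$; these are the two support shapes that alternate in the CFNF of a candy.

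First I would rule out $m=1$. By Lemma \ref{cardue}, $supp(u_0)$ is type A and $supp(u_1)$ is type B. Then $s_2\in\rd(w)$, and in $ws_2$ the generator $s_\bullet$ (adjacent only to $s_2$ in the Coxeter graph) commutes with every remaining generator, giving $s_\bullet\in\rd(ws_2)$ and hence a right reduction $w\red_{s_\bullet} ws_2$, contradicting $w\in\I(\D)$. Lemma \ref{cardue} applied to $w^{-1}$ also determines $\rd(w)=\ld(w^{-1})$ to be type A for some endpoint choices $s_\bullet',s_\circ'$.

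The heart of the proof is to show by induction on $i$ that for $0\le i\le m$, $supp(u_i)$ is of type A for $i$ even and of type B for $i$ odd, with the candy alternation $x_i\ne x_{i+1}$, $y_i\ne y_{i+1}$. The base cases $i\in\{0,1\}$ are Lemma \ref{cardue}. For the inductive step, Theorem \ref{normalform}(d) together with Lemma \ref{lemma:caruno}(b) (which provides $f_\bullet(w)=f_\circ(w)=0$) yields the upper bound $supp(u_i)\subseteq$ (candy target), with at most one element from each of $\{s_0,s_1\}$ and $\{s_{n+1},s_{n+2}\}$. The matching lower bound is forced by Proposition \ref{caratterizzazionefc}: omitting any expected candy generator from $u_i$ would leave only a single non-commuting element in a heap open interval between two same-labelled occurrences (located at adjacent layers $u_{i-1},u_{i+1}$ already determined by the induction, or involving the terminal layer $u_m$ supplied by the dual picture), realising a forbidden factor $sts$ with $m_{s,t}=3$. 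The alternation $x_i\ne x_{i+1}$ is enforced by the same principle: equality would place two occurrences of the same $s_\bullet$ at $u_{2i}$ and $u_{2i+2}$ separated in the heap by only $s_2\in u_{2i+1}$, giving a forbidden $s_\bullet s_2 s_\bullet$ factor.

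To finish, $m$ must be even: since $supp(u_m)\subseteq\rd(w)=$ type A and the inductive pattern would assign type B to $u_m$ when $m$ is odd, the disjointness of the two types would force $u_m=e$, a contradiction. Combining $supp(u_m)\subseteq\rd(w)$ with the observation that every $s\in\rd(w)$ must have an occurrence in $supp(u_m)$ (any earlier occurrence being blocked from the descent set by the non-commuting neighbour in the next layer of opposite type) then recovers the exact type A shape of $supp(u_m)$ with the correct alternation. The main obstacle will be organising the induction so that at each stage the FC heap-interval analysis draws only on already-established layer information or on the dual $\rd(w)$, with a uniform treatment of the middle indices and of the endpoints $k=1,n/2$ (where the non-commuting candidates are further restricted by $f_\bullet=f_\circ=0$).
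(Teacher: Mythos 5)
Your overall strategy --- pin down $u_0,u_1$ by Lemma \ref{cardue}, pin down the last layers by applying it to $w^{-1}$, and then propagate the candy pattern through the intermediate layers by a forward induction on the layer index --- is genuinely different from the paper's proof, which instead peels off the first two blocks, shows that the tail $w_2=u_2\cdots u_m$ is itself in $\I(\D)$ with complete support, and inducts on $m$ so that Lemma \ref{cardue} can be re-applied to the tail. The difference matters, because your inductive step has a gap precisely where you establish the \emph{lower} bound on $supp(u_i)$ for an intermediate layer $2\le i\le m-2$. You claim that omitting an expected candy generator from $u_i$ produces a forbidden $sts$ factor between two same-labelled occurrences ``located at adjacent layers $u_{i-1},u_{i+1}$ already determined by the induction''; but in a forward induction $u_{i+1}$ has not yet been determined when you argue about $u_i$, and the dual picture only supplies the last two layers. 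Worse, if the missing generator and its non-commuting neighbours simply never recur after layer $i-1$, no braid obstruction arises at all: full commutativity alone does not force the generator into $supp(u_i)$. For instance, $(s_0s_3s_5)(s_2s_4)(s_1)$ in $\widetilde{D}_6$ is fully commutative, has the correct $u_0,u_1$, and has a deficient $u_2$; it is excluded only because it admits the reducible suffix $s_2s_1$, not because of any forbidden factor (no generator repeats). In general the contradiction must come from irreducibility, via a reducible prefix of the tail $u_i\cdots u_m$, and extracting such a prefix is exactly the content of Lemma \ref{secondlemmanormalform} and the proof of Lemma \ref{cardue} --- machinery your sketch does not invoke for the intermediate layers.

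The way to close the gap is essentially the paper's route: show that $w_2:=u_2\cdots u_m\in\I(\D)$ (right irreducibility is inherited from $w$; left irreducibility holds because a prefix $st$ of $w_2$ with $s\in supp(u_2)$ and $t\in supp(u_3)\subseteq supp(u_1)$ would create a $tst$ factor in $w$), that $supp(w_2)$ is complete when $m>2$, and that $f_\bullet(w_2)=f_\circ(w_2)=0$ excludes the zigzag case; the inductive hypothesis on $m$ then delivers all remaining layers simultaneously, with the alternation $x_0\ne x_1$ forced by full commutativity. Your elimination of $m=1$ and your parity argument for $m$ are correct as far as they go, but both rest on the unproved intermediate-layer step.
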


\begin{proof}
Let $w=u_0\cdots u_m$ be its CFNF. By Lemma $\ref{cardue}$ we know that $m\ge 1$, $n$ is even and $supp(u_0)$, $supp(u_1)$ are described in points (a) and (b). This implies $m\ge 2$, otherwise $w$ would admit $\s s_2$ as a suffix, which is not possible since $w\in \I(\D)$. Since $w\in \fc(\D)$, by definition of CFNF, we have that
\begin{align}
    supp(u_2)\subseteq \{\ti\} \cup \{s_3, s_5, \ldots, s_{n-1}\} \cup \{\tn\}:=\mathcal{S}. \label{eq:inclD}
\end{align}
Define $w_2:=u_2\cdots u_m$ and note that is in $\I(\D)$. In fact, it is right irreducible since it is a suffix of $w\in \I(\D)$. Moreover, it is also left irreducible because if $st$ with $m_{s,t}=3$ is a prefix of $w_2$, then $s\in supp(u_2)$ and $t\in supp(u_3)$, but by CFNF we have that $supp(u_3)\subseteq supp(u_1)$, so $tst$ would be a factor of $w$, that contradicts the hypothesis of fully commutativity. 

To summarize, we have $w=u_0u_1w_2$, with $w_2=u_2\cdots u_m\in \I(\D)$ and $m\ge 2$. Now we show by induction on $m\ge 2$ that $w\in \car(\D)$.

Suppose $m=2$, then the inclusion in \eqref{eq:inclD} is an equality. Otherwise there would exist $s'\in \mathcal{S}\setminus supp(u_2)$, $s\in supp(u_1)$, $t \in supp(u_2)$ with $m_{s,t}=m_{s',s}=3$, and the suffix $st$ would appear in $w$, that is not possible. Hence $w=u_0u_1u_2\in \car(\D)$; its heap is depicted in Figure \ref{fig:small_candy}.

\begin{figure}[ht]
    \centering
    \includegraphics[width=0.5\linewidth]{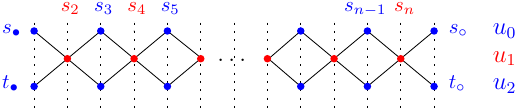}
    \caption{Heap of $u_0u_1u_2$ in the proof of Proposition \ref{cartre}.}
    \label{fig:small_candy}
\end{figure}

If $m>2$, since $w_2$ is irreducible, we know that $supp(w_2)$ is complete, otherwise by Remark \ref{rem:stared}, $w_2\in \TTC(\D)$, but this cannot be since $m>2$. Moreover, since $w_2\in \su(w)$ and by Lemma \ref{lemma:caruno}, we have $f_\bullet(w_2)=f_\circ(w_2)=0$, so $w_2$ is not a complete zigzag. Therefore, by inductive hypothesis, $w_2$ is a candy element, and we can conclude that $w$ is also a candy element, since $w=u_0u_1w_2$.
\end{proof}

\begin{Theorem}[Classification of irreducible elements of type $\D$]\ 
\label{decimo}

\noindent
The set of irreducible elements in $\fc(\D)$ is partitioned in three families as follows:
\begin{align*}
    \I(\D) = \car (\D) \sqcup \zz(\D) \sqcup \TTC(\D).
\end{align*}
If $n$ is odd, $\car(\D)=\emptyset$. 
\end{Theorem}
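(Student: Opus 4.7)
The plan is to assemble the result from the pieces already established, treating it essentially as a packaging theorem. First I would verify the easy inclusion $\car(\D) \cup \zz(\D) \cup \TTC(\D) \subseteq \I(\D)$: elements of $\TTC(\D)$ are irreducible since consecutive letters in any reduced expression commute, so no braid-pair prefix or suffix can exist; complete zigzags and candies were observed to be irreducible immediately after Definitions~\ref{def:zigzag} and~\ref{caramella} respectively (for zigzags by direct inspection of their left/right descent sets, and for candies by the forced alternation of the support pattern).

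For the reverse inclusion, fix $w \in \I(\D)$ and split on whether $supp(w)$ is complete. If $supp(w)$ is not complete, Remark~\ref{rem:stared} applies and gives $w \in \TTC(\D)$. If $supp(w)$ is complete and $w \in \zz(\D)$ we are done; otherwise $w \in \I(\D) \setminus \zz(\D)$ has complete support and Proposition~\ref{cartre} yields $w \in \car(\D)$. This exhausts all cases.

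It then remains to argue pairwise disjointness of the three families. Any $w \in \TTC(\D)$ has CFNF $w = u_0$, i.e. $m = 0$, while every $w \in \car(\D)$ has $m \geq 2$; moreover any complete zigzag contains the non-commuting pair $s_0 s_2$ (or the symmetric $s_{n+2} s_n$) in a reduced expression, so it cannot be a product of pairwise commuting generators. Thus $\TTC(\D)$ is disjoint from both $\car(\D)$ and $\zz(\D)$. Finally, by definition every $w \in \zz(\D)$ satisfies $f_\bullet(w), f_\circ(w) \ge 1$, whereas Lemma~\ref{lemma:caruno}(b) forces $f_\bullet(w) = f_\circ(w) = 0$ for every $w \in \car(\D)$, so $\car(\D) \cap \zz(\D) = \emptyset$. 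The last assertion ($\car(\D) = \emptyset$ for $n$ odd) is built into Definition~\ref{caramella} and is also a consequence of Lemma~\ref{cardue}, where the alternating parity pattern of the supports in the CFNF forces $n$ to be even.

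The main conceptual obstacle has already been overcome in Proposition~\ref{cartre}, namely in forcing the rigid block structure of the CFNF once $w$ is irreducible, has complete support, and is not a zigzag; the present theorem is therefore a cataloguing statement whose only delicate point is confirming the disjointness, and that is handled cleanly by the invariants $f_\bullet$, $f_\circ$ together with the CFNF length $m$.
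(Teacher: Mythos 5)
Your proof is correct and follows essentially the same route as the paper's, which simply splits on whether $supp(w)$ is complete and cites Remark \ref{rem:stared} and Proposition \ref{cartre}; your added verifications of the easy inclusion and of pairwise disjointness match observations the paper records immediately after Definitions \ref{def:zigzag} and \ref{caramella}. One tiny caveat: invoking Lemma \ref{lemma:caruno}(b) to get $f_\bullet=f_\circ=0$ for candies presupposes that candies are not zigzags, which is part of what you are proving, but this is harmless since that vanishing follows directly from Definition \ref{caramella} (as the paper itself notes).
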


\begin{proof}
Let $w\in \I(\D)$. If $supp(w)$ is not complete then $w\in \TTC(\D)$, by Remark \ref{rem:stared}. If $supp(w)$ is complete and $w$ is not a complete zigzag, then $n$ is even and $w\in \car(\D)$, by Proposition $\ref{cartre}$. 
\end{proof}

\begin{Corollary}
\label{corollary:corclassification}
    If $w\in \fc(\D)$ is reducible to $v,v'\in \I(\D)$, then $v,v'$ belong to the same family of irreducible elements. In particular, if $v,v'\notin \TTC(\D)$, then $v=v'$. 
\end{Corollary}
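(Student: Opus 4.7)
The plan is to leverage Theorem \ref{theorem:starope}(c), which guarantees that whenever $w$ reduces to an irreducible element with complete support, that element is unique. Combined with the partition in Theorem \ref{decimo}, this should immediately settle both statements of the corollary.

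The first step I would take is to distinguish the three families by their supports. By Definitions \ref{caramella} and \ref{def:zigzag}, every candy element and every complete zigzag has complete support. On the other hand, the support of an element of $\TTC(\D)$ is an independent set in the Coxeter graph of $\D$; since that graph has edges (for instance the edge joining $s_1$ and $s_2$), no element of $\TTC(\D)$ can have complete support. Thus, for any $z\in\I(\D)$, we have $supp(z)=S$ if and only if $z\in\car(\D)\cup\zz(\D)$.

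The corollary then splits into two cases according to whether $v\in\car(\D)\cup\zz(\D)$ or $v\in\TTC(\D)$. In the first case, $supp(v)$ is complete and $|S|=n+3>2$, so Theorem \ref{theorem:starope}(c) applies and $v$ is the unique irreducible element to which $w$ can be reduced; in particular $v'=v$, and they lie in the same family (and are equal). In the second case, suppose for contradiction that $v'\in\car(\D)\cup\zz(\D)$. Then the previous case applied with the roles of $v$ and $v'$ exchanged yields $v=v'$, contradicting the disjointness of the three families in Theorem \ref{decimo}. Hence $v'\in\TTC(\D)$ as well, and $v,v'$ are in the same family. The ``in particular'' statement is then just the first case of the dichotomy.

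I do not anticipate any serious obstacle, since the heavy lifting is packaged inside Theorem \ref{theorem:starope}(c) and the classification Theorem \ref{decimo}. The only subtle point is the observation that $\TTC(\D)$ is incompatible with complete support, which is immediate from the Coxeter graph of $\D$ being non-edgeless, and the verification that $|S|>2$, which holds trivially in affine type $\D$.
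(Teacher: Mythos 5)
Your proposal is correct and follows essentially the same route as the paper: the paper's proof likewise observes that any $v\in\I(\D)\setminus\TTC(\D)$ has complete support (citing Theorem \ref{decimo}, where you read it off the definitions of candy and complete zigzag) and then invokes Theorem \ref{theorem:starope}(c) to conclude $v=v'$. Your additional case analysis for the ``same family'' claim is a harmless elaboration of what the paper leaves implicit.
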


\begin{proof}
    If $v\notin \TTC(\D)$, then $supp(v)$ is complete by Theorem \ref{decimo}, so $v=v'$ by Theorem \ref{theorem:starope}(c).
\end{proof}

\section{Irreducibility and weak irreducibility in $\fc(\B)$}\label{se3}

As we mentioned above, in the non-simply laced cases the notions of reducibility and weak reducibility do not coincide, so we have that $\IRc(\B)=\IR(\B)\setminus\I(\B)$ is not empty.
In this section we classify both the irreducible and weak irreducible elements of a Coxeter group of type $\B$. The families of irreducible elements that we now list are analogous to those arising in type $\D$, although certain differences occur in the present setting.

\begin{Definition}
\label{pseudocomm}
We define the set of \textit{weak completely commutative} elements $\TTCw(\B)\subseteq \fc(\B)$ by $$\TTCw(\B):=\TTC(\B)\cup\{s_ns_{n+1}v, s_{n+1}s_nv\mid v\in \TTC(\B), s_{n-1},s_n,s_{n+1}\notin supp(v)\}.$$ 

\end{Definition}

Note that $w\in \TTCw(\B)$ is weak irreducible. In particular, $w\in \I(\B)$ if and only if $w\in \TTC(\B)$. 
\smallskip

We denote by $\mathcal{A},\mathcal{B}$ the elements in $\fc(\B)$
\begin{align*}
    \mathcal{A}=s_2s_3\cdots s_ns_{n+1}\quad \mbox{and} \quad   \mathcal{B}=s_ns_{n-1}\cdots s_{2}s_1s_0.\label{eq:AB}
\end{align*}

\begin{Definition}
\label{bcompletezz}
An element $w\in \fc(\B)$  is called a \textit{complete zigzag}  if it admits a reduced expression of one of the following forms:
\begin{enumerate}
    \item $s_0s_1(\mathcal{AB})^k\mathcal{A}^h$,
    \item $s_{n+1}(\mathcal{BA})^k\mathcal{B}^h$, 
\end{enumerate}
where $k\ge 0$, $h\in \{0,1\}$ and $k+h >0$.
\end{Definition}
\begin{figure}[ht]
     \centering
     \includegraphics[width=1\linewidth]{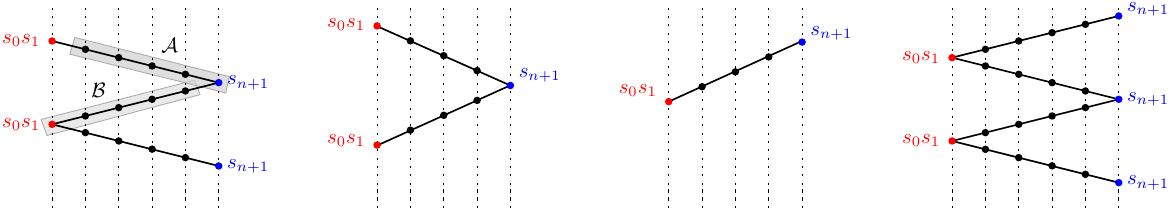}
     \caption{Examples of heaps of complete zigzag elements in type $\B$.}
     \label{fig:CZZB}
 \end{figure}

 Denote by $\zz(\B) \subset \fc(\B)$ the set of complete zigzag elements and note that they are weak irreducible. In particular, $w\in \zz(\B)\cap \I(\B)$ if and only if $w$ is of the form $s_0s_1(\mathcal{AB})^k\mathcal{A}^h$ with $h>0$.

\begin{Definition}
\label{bweakzz}
An element $w \in \fc(\B)$ is called a \textit{weak zigzag} if it is a factor of a complete zigzag and $w\notin \TTC(\B)$. 
\end{Definition}

Note that a weak zigzag $w$ is weak irreducible if and only if $w$ is a complete zigzag.

\begin{Definition}
\label{bcandy}
Let $ w \in \fc(\B) $, $ n $ even, and let $ w = u_0 \cdots u_m $ be its normal form with $ m \ge 2 $. We say that $ w $ is a \textit{candy} if $ m $ is even and:
\begin{itemize}
    \item $ supp(u_{2i}) = \{s_3, s_5, \ldots, s_{n+1} \} \cup \left\{ x_i\right\} $, $ x_i \in \left\{s_0, s_1 \right\} $ such that $ x_i \neq x_{i+1} $, for every $ 0 \le i \le \frac{m}{2} $;
    \item $ supp(u_{2i+1}) = \{s_2, s_4, \ldots, s_n\} $, for every $ 0 \le i < \frac{m}{2} $.
\end{itemize}
Denote by $ \car(\B) \subset \fc(\B) $ the set of candy elements and note that they are irreducible. If $n$ is odd, we set $\car(\B)=\emptyset$.
\end{Definition}

\begin{Definition}
\label{weakcandy}
Let $ w \in \fc(\B) $, $ n $ odd, and let $ w = u_0 \cdots u_m $ be its normal form with $ m \ge 2 $. We say that $ w $ is a \textit{left candy} if $ m $ is even and:
\begin{itemize}
    \item $ supp(u_{2i}) = \{s_3, s_5, \ldots, s_n\} \cup \left\{ x_i\right\} $, $ x_i \in \left\{s_0, s_1 \right\} $ such that $ x_i \neq x_{i+1} $, for every $ 0 \le i \le \frac{m}{2} $;
    \item $ supp(u_{2i+1}) = \{s_2, s_4, \ldots, s_{n+1}\} $, for every $ 0 \le i < \frac{m}{2} $.
\end{itemize}
Denote by $ \carw (\B) \subset \fc(\B) $ denotes the set of left candy elements, in particular $\carw(\B) \subset \IRc(\B) $. If $n$ is even, we set $\carw(\B)=\emptyset$.
\end{Definition}

In the next definition, we introduce an injective map
$$\nmap:\fc(\B) \rightarrow \fc(\D).$$
This map will serve as a tool for classifying the irreducible and weakly irreducible elements of type $\B$, based on the corresponding classification for type $\D$ developed in Section~\ref{se2}.

\begin{Definition}\label{def:mu}   
Let $w \in \fc(\B)$. We define the map $\nmap:\fc(\B) \rightarrow \fc(\D)$ by the following rule:

\begin{itemize}

\item[(1)] If no reduced expression of $w$ contains the factor $s_{n+1}s_{n}s_{n+1}$ or $s_{n}s_{n+1}s_{n}$, consider any $\mathbf{w} \in \mathcal{R}(w)$. Then $\nmap(w)$ is the element in $\fc(\D)$ with reduced expression $\mathbf{w}$.

\item[(2)] Otherwise, choose a reduced expression
$w \in \mathcal{R}(w)$ that contains all possible occurrences of the factors $s_{n+1}s_ns_{n+1}$ and $s_ns_{n+1}s_n$. Then $\nmap(w)$ is the element whose reduced expression is obtained from $\mathbf{w}$ by replacing each occurrence of $s_{n+1}s_ns_{n+1}$ with $s_{n+1}s_ns_{n+2}$, and each occurrence of $s_ns_{n+1}s_n$ with $s_ns_{n+1}s_{n+2}s_n$.

\end{itemize}

\end{Definition} 

The map $\nmap$ is well-defined. Point (1) is immediate, and since in $\fc(\B)$ there are no elements containing both factors $s_{n+1}s_{n}s_{n+1}$ and $s_ns_{n+1}s_n$ (see e.g. \cite[\S 3.2]{BJNFC}), point (2) is also well-posed.

\begin{Lemma}\label{lemma:mubij}
    The map $\nmap$ is injective, and $w\in \I(\B)$ if and only if $\nmap(w)\in \I(\D)$.
\end{Lemma}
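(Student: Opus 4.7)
The plan is in two parts: injectivity of $\nmap$, then the equivalence of irreducibility.

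\textbf{Injectivity of $\nmap$.} The plan is to construct an explicit left inverse $\psi:\nmap(\fc(\B))\to\fc(\B)$ that reverses the substitutions of Definition~\ref{def:mu}. By direct inspection of the two substitution rules, any vertex labelled $s_{n+2}$ in the heap of $\nmap(w)$ sits in one of two local configurations: for the substitution $s_{n+1}s_ns_{n+1}\to s_{n+1}s_ns_{n+2}$ (first type), the $s_{n+2}$ vertex has an $s_n$ vertex covering it from above and no $s_n$ vertex below it as a covering neighbor; whereas for $s_ns_{n+1}s_n\to s_ns_{n+1}s_{n+2}s_n$ (second type), the $s_{n+2}$ vertex is sandwiched between two $s_n$ vertices, both of which are its covering neighbors. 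Since the heap is an invariant of the element, these local configurations unambiguously determine the substitution type. I define $\psi(\nmap(w))$ to be the element obtained from $H(\nmap(w))$ by relabelling each first-type vertex $s_{n+2}$ as $s_{n+1}$ and deleting each second-type $s_{n+2}$ vertex (merging the adjacent $s_n$'s into a single $s_n$); this recovers the heap of $w$, so $\psi\circ\nmap=\mathrm{id}$ and $\nmap$ is injective.

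\textbf{Equivalence of irreducibility.} I prove each direction by contrapositive, using that $w$ is left (resp.\ right) reducible by $(s,t)$ with $m_{s,t}\ge 3$ iff $w$ admits $st$ as prefix (resp.\ $ts$ as suffix).

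$(\Leftarrow)$ If $w\in\fc(\B)$ admits $st$ as prefix with $m^{\B}_{s,t}\ge 3$, a short case analysis on $(s,t)$ shows that $\nmap(w)$ retains $st$ as prefix: the substitutions modify only letters from the third position onward when the prefix is $s_ns_{n+1}$ or $s_{n+1}s_n$, and leave any other prefix unchanged. Since the only decrease in the $m$-function from $\B$ to $\D$ is $m_{s_n,s_{n+1}}:4\to 3$, still $\ge 3$, $\nmap(w)$ is left reducible in $\D$. Right reducibility is analogous, with the extra observation that a terminal zigzag in $w$ yields suffixes like $s_ns_{n+2}$ or $s_{n+2}s_n$ in $\nmap(w)$, which are themselves right-reducing pairs in $\D$ (via $(s_n,s_{n+2})$).

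$(\Rightarrow)$ Suppose $\nmap(w)$ admits $st$ as prefix in $\D$ with $m^{\D}_{s,t}\ge 3$. The heap characterization of Step 1 shows every $s_{n+2}$ vertex has an $s_n$ covering it from above, hence $s_{n+2}\notin\ld(\nmap(w))$ and $s\ne s_{n+2}$. If $t=s_{n+2}$, adjacency in the Coxeter graph of $\D$ forces $s=s_n$, and the prefix $s_ns_{n+2}$ must arise by commuting $s_{n+1}$ and $s_{n+2}$ inside a second-type substitution, so $\psi$ returns a prefix $s_ns_{n+1}$ of $w$ in $\B$, giving reducibility by $(s_n,s_{n+1})$ (with $m^{\B}=4$). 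If $s,t\ne s_{n+2}$, the reverse substitution carries the prefix $st$ of $\nmap(w)$ to a prefix $st$ of $w$, and $m^{\B}_{s,t}\ge m^{\D}_{s,t}\ge 3$. Right reducibility is symmetric: an $s_{n+2}$ right descent of $\nmap(w)$ forces $w$ to end with a first-type terminal zigzag, producing a right reduction in $\B$.

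\textbf{Main obstacle.} The crux is the local heap characterization distinguishing the two substitution types, which both ensures $\psi$ is well-defined and provides the bridge for descent-set bookkeeping in the irreducibility equivalence. A related subtlety is the asymmetry at the right end: a terminal substitution introduces $s_{n+2}$ as a right descent of $\nmap(w)$ without a verbatim counterpart in $\rd(w)$, so the correspondence between reductions is routed through the heap rather than being pointwise in the descent sets.
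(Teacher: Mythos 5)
Your second part (the equivalence of irreducibility) follows essentially the same route as the paper: one checks that a reducible prefix or suffix of $w$ survives under $\nmap$, and that the only prefixes and suffixes of $\nmap(w)$ requiring care are those of the form $\sn s_n$ or $s_n\sn$, which pull back to reducible pairs in $\B$ because $s_{n+2}$ is never a left descent of $\nmap(w)$ and a prefix $s_ns_{n+2}$ can only arise from the second substitution, in which case $s_ns_{n+1}$ is also a prefix. That analysis is sound.

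The genuine gap is in your injectivity argument: the local heap criterion does not distinguish the two substitution types. Take $w=s_{n+1}s_ns_{n+1}s_{n-1}s_n\in\fc(\B)$ (for $n\ge 3$). Only the first substitution applies, so $\nmap(w)=s_{n+1}s_ns_{n+2}s_{n-1}s_n$. In $H(\nmap(w))$ the $s_{n+2}$ vertex (position $3$) is covered from above by the $s_n$ in position $2$ and from below by the $s_n$ in position $5$: since $s_{n-1}$ commutes with $s_{n+2}$ in $\D$, there is no vertex strictly between them, so $3\prec 5$ is a covering relation. Hence this first-type $s_{n+2}$ is sandwiched between two $s_n$ covering neighbors, which is exactly your second-type configuration; your $\psi$ would delete it rather than relabel it, so $\psi\circ\nmap\neq\mathrm{id}$. (Separately, your inverse operation for the second type is misstated: from $s_ns_{n+1}s_{n+2}s_n$ one must only delete the $s_{n+2}$, recovering $s_ns_{n+1}s_n$; ``merging the adjacent $s_n$'s into a single $s_n$'' would drop a letter and give an element of the wrong length.) A correct discriminant would be, for instance, whether there is an $s_{n+1}$ vertex incomparable with the $s_{n+2}$ (second type) as opposed to an $s_{n+1}$ lying above its covering $s_n$ (first type); or, more simply, one can relabel every $s_{n+2}$ as $s_{n+1}$ and cancel the resulting adjacent equal letters. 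Note also that the point in your irreducibility argument where you invoke ``$\psi$ returns a prefix $s_ns_{n+1}$ of $w$'' should be rerouted around $\psi$, e.g.\ by observing directly that a second-type prefix $s_ns_{n+1}s_{n+2}s_n$ of $\nmap(w)$ forces $s_ns_{n+1}$ to be a prefix of $w$. The paper itself does not belabor injectivity; its proof is devoted to the prefix/suffix analysis you carry out in your second part.
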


\begin{proof}
   Note that if there exists a reducible prefix or suffix in $w$, then it will be transformed by $\nmap$ into a reducible prefix or suffix in $\nmap(w)$. 
   On the other hand, the only reducible prefixes and suffixes that need to be analyzed are $\sn s_n$ and $s_n \sn$. If $\sn s_n$ or $s_n\sn$ is a prefix of $\nmap(w)$, then $\sn=s_{n+1}$ by construction of $\nmap$, so $s_{n+1}s_n$ or $s_ns_{n+1}$ is a prefix of $w$. The analogous holds for the suffix, with the only difference that if $s_n\sn$ is a suffix of $\nmap(w)$, $\sn$ could be equal to $s_{n+1}$ or $s_{n+2}$.  
\end{proof}

\begin{Remark}
\label{remark:weakzigBeD}
    Let $w\in \fc(\B)$, we have that $w$ is a weak zigzag if and only if $\nmap(w)$ is a weak zigzag. Moreover, if $w\in \IR(\B)$ and $\nmap(w)$ is a weak zigzag, then $w\in \zz(\B)$. Finally, $w$ is a candy in $\fc(\B)$ if and only if $\nmap(w)$ is a candy in $\fc(\D)$. 
\end{Remark}

The next theorem follows directly by Theorem \ref{decimo}, Lemma \ref{lemma:mubij} and Remark \ref{remark:weakzigBeD}.

\begin{Theorem}\label{theorem:classredB}
The set of irreducible elements in $\fc(\B)$ is partitioned as follows
    $$\I(\B)=\TTC(\B)\sqcup \car(\B)\sqcup \zz_\bullet(\B),$$
where $\zz_\bullet(\B):=\{w\in \zz(\B)\mid \ld(w)=\rd(w)=\{s_0,s_1\}\}$ and if $n$ is odd, $\car(\B)=\emptyset$.
\end{Theorem}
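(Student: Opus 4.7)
My plan is to transfer the classification from type $\D$ (Theorem \ref{decimo}) to type $\B$ via the injective map $\nmap$ introduced in Definition \ref{def:mu}. Given $w\in\I(\B)$, Lemma \ref{lemma:mubij} yields $\nmap(w)\in\I(\D)$, and then Theorem \ref{decimo} places $\nmap(w)$ into exactly one of the three families $\TTC(\D)$, $\car(\D)$, or $\zz(\D)$. I would treat these three cases separately, each time showing that $w$ itself lies in the analogous family in $\B$, thereby establishing the inclusion $\I(\B)\subseteq \TTC(\B)\cup \car(\B)\cup \zz_\bullet(\B)$.

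For the three cases: if $\nmap(w)\in \car(\D)$, Remark \ref{remark:weakzigBeD} gives $w\in \car(\B)$ directly. If $\nmap(w)\in\TTC(\D)$, then $\nmap(w)$ has no pair of adjacent non-commuting generators in its reduced expression; but whenever $w$ contained a factor $s_{n+1}s_ns_{n+1}$ or $s_ns_{n+1}s_n$, the definition of $\nmap$ would produce the factor $s_{n+1}s_ns_{n+2}$ or $s_ns_{n+1}s_{n+2}s_n$ in $\nmap(w)$, with $s_n$ non-commuting with both $s_{n+1}$ and $s_{n+2}$ in $\D$, a contradiction. So $w$ coincides with $\nmap(w)$ as a reduced word and $w\in\TTC(\B)$. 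Finally, if $\nmap(w)\in\zz(\D)$, then $\nmap(w)$ is a weak zigzag in $\D$, so by Remark \ref{remark:weakzigBeD} $w$ is a weak zigzag in $\B$; since $w\in\I(\B)\subseteq\IR(\B)$, the second part of the same remark gives $w\in\zz(\B)$.

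It then remains to identify which complete zigzags in $\B$ are genuinely irreducible. Among the forms in Definition \ref{bcompletezz}, the expressions $s_{n+1}(\mathcal{BA})^k\mathcal{B}^h$ admit $s_{n+1}$ as left descent with $s_n\in\ld(s_{n+1}w)$ (since $s_{n+1}w$ begins with $\mathcal{B}=s_n\cdots$), so they are left reducible via the pair $\{s_{n+1},s_n\}$ (recall $m_{s_n,s_{n+1}}=4\ge 3$); and the expressions $s_0s_1(\mathcal{AB})^k\mathcal{A}$ are right reducible via the symmetric argument applied to the suffix $\mathcal{A}$ ending in $s_{n+1}$. Thus only the forms $s_0s_1(\mathcal{AB})^k$ with $k\ge 1$ survive, and these are precisely the elements of $\zz_\bullet(\B)$. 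For the reverse inclusion, $\TTC(\B)\subseteq\I(\B)$ is immediate, $\car(\B)\subseteq\I(\B)$ is stated in Definition \ref{bcandy}, and $\zz_\bullet(\B)\subseteq\I(\B)$ follows by verifying that for $w=s_0s_1(\mathcal{AB})^k$ the left multiplications $s_0w$ and $s_1w$ begin with $s_1s_2$ or $s_0s_2$ respectively, whose only left descent is $s_1$ or $s_0$ (since $s_2$ cannot be brought to the front past the non-commuting $s_1$ or $s_0$); the right side is symmetric. Disjointness of the three families in $\B$ follows from the disjointness in $\D$ together with injectivity of $\nmap$, since $\nmap$ sends each family in $\B$ into the corresponding family (or a subset of the $\zz$ family) in $\D$.

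The step I expect to require the most care is the reducibility analysis of the non-$\zz_\bullet$ zigzags, because the $m=4$ edge between $s_n$ and $s_{n+1}$ means that a star reduction need not be a weak star reduction, and one must ensure the definitions are applied correctly. Once that verification is in place, however, the theorem follows by direct bookkeeping using Theorem \ref{decimo}, Lemma \ref{lemma:mubij}, and Remark \ref{remark:weakzigBeD}.
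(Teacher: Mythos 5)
Your proposal is correct and follows essentially the same route as the paper, which dispatches this theorem in a single line as a direct consequence of Theorem \ref{decimo}, Lemma \ref{lemma:mubij} and Remark \ref{remark:weakzigBeD}; you have simply made the case analysis and the reducibility check on complete zigzags explicit. Your identification of the irreducible complete zigzags as exactly $s_0s_1(\mathcal{AB})^k$ with $k\ge 1$ (i.e.\ $\zz_\bullet(\B)$) is the right one, and it in fact corrects the paper's in-text remark following Definition \ref{bcompletezz}, which states the condition as $h>0$ where $h=0$ is what is actually needed.
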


\smallskip

\begin{Remark}\label{rem:prefirrB}
    By Definitions \ref{def:star-red} and \ref{irr}, if $w\in \IRc(\B)$, then $w$ has a prefix or suffix $st$, with $m_{s,t}\ge3$, such that $tst\in \fc(\B)$. This necessarily implies that $\{s,t\}=\{s_n, s_{n+1}\}$. Indeed, if $w$ has a different prefix or suffix of the form $st$, with $m_{s,t}\geq 3$, then $m_{s,t}=3$ and $sts\notin \fc(\B)$.
\end{Remark}


\begin{Lemma}
\label{lemma:lemmaBuno}
    Let $w\in \IR(\B) \setminus \zz(\B)$ with complete support. Then we have:
    \begin{itemize}
        \item[(a)] if $w\in \IRc(\B)$, then $s_{n}s_{n+1}\in \pr(w) \cup \pr(w^{-1})$;
        \item[(b)] $f_\bullet(\nmap(w))=f_\circ(\nmap(w))=0$;
        \item[(c)] $\left | \ld(w)\cap \{s_0,s_1\}\right |=1$. 
    \end{itemize}
\end{Lemma}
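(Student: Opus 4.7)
The plan is to follow the template of Lemmas~\ref{lemma:occorrenzeuno} and \ref{cardue} from type $\D$, adapting them to $\B$ via the injective map $\nmap$ together with an analogous CFNF analysis. Parts (b) and (c) will mirror the $\D$ proofs very closely, while part (a) requires a case analysis specific to type $\B$. I would prove (b) first, deduce (c) from (b), and handle (a) last.

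For (b), the first key observation is that since $\nmap$ is the identity on reduced expressions free of the factors $s_{n+1} s_n s_{n+1}$ and $s_n s_{n+1} s_n$, and operates on those by the explicit substitutions of Definition~\ref{def:mu}, the quantity $f_\bullet(\nmap(w))$ equals the maximum number of $s_0 s_1$ factors in reduced expressions of $w$, while $f_\circ(\nmap(w))$ counts the $s_n s_{n+1} s_n$ factors of $w$ (only the second substitution produces adjacent $s_{n+1} s_{n+2}$ in $\nmap(w)$). Assuming for contradiction $f_\bullet(\nmap(w)) \neq 0$, I factor $w = v v'$ reduced with $\rd(v) = \{s_0, s_1\}$ and no remaining $s_0 s_1$ factor in $v'$, and reproduce the proof of Lemma~\ref{lemma:occorrenzeuno} in $\B$: either $v \in \TTC(\B)$, forcing $v = s_0 s_1$, or $v$ is a weak zigzag right weak irreducible (by the $\B$-analog of Lemma~\ref{lemma:condzigzag}), hence a complete zigzag (being also left weak irreducible as a prefix of $w$), which forces $w$ to be a weak zigzag by the $\B$-analog of Corollary~\ref{corollary:precompzigzag}, hence a complete zigzag since $w \in \IR(\B)$, contradicting $w \notin \zz(\B)$. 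Applying the same argument to $w^{-1}$ forces $s_0, s_1 \in \rd(w)$, so with $f_\bullet(\nmap(w)) = 1$ we would have $w = s_0 s_1$, violating complete support. The proof that $f_\circ(\nmap(w)) = 0$ is parallel, with the factor $s_n s_{n+1} s_n$ in place of $s_0 s_1$, isolating its leftmost occurrence and using $\TTCw(\B)$ instead of $\TTC(\B)$.

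For (c), since $f_\bullet(\nmap(w)) = 0$ by (b), $s_0 s_1$ is not a prefix of $w$, giving $\lvert \ld(w) \cap \{s_0, s_1\} \rvert \le 1$. For the reverse inequality, let $k$ be the minimum index with $supp(u_k) \cap \{s_0, s_1\} \ne \emptyset$ in the CFNF of $w$; complete support guarantees $k$ is well defined, and the $\B$-analog of Lemma~\ref{cornormalform}(a), whose hypothesis $f_\circ = 0$ is supplied by (b), produces a prefix $z \in \su(s_{n+1} s_n \cdots s_2 s_\bullet)$ of $w$ with $\ell(z) = k+1$. If $k \ge 1$ then $\ell(z) \ge 2$ and $z$ is a weak reducible prefix of $w$, contradicting $w \in \IR(\B)$; therefore $k = 0$ and exactly one of $s_0, s_1$ lies in $\ld(w)$.

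For (a), Remark~\ref{rem:prefirrB} provides a prefix or suffix of $w$ among $\{s_n s_{n+1}, s_{n+1} s_n\}$. The cases $s_n s_{n+1} \in \pr(w)$ and $s_{n+1} s_n \in \su(w)$ directly give the conclusion, and by the involution $w \leftrightarrow w^{-1}$ the remaining cases reduce to ruling out $s_{n+1} s_n \in \pr(w)$ under the assumption that both favorable cases fail. Weak irreducibility excludes $s_{n+1} s_n s_{n+1}$ as a prefix, so the third letter of a reduced expression starting $s_{n+1} s_n$ is not $s_{n+1}$; if this letter commutes with both $s_n$ and $s_{n+1}$, it would migrate to the left and produce a new left descent that, combined with the commutation relations of $\B$, yields a weak reducible prefix (typically of the form $s_k s_{k\pm 1}$) or violates complete support. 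The residual subcase where the third letter is $s_{n-1}$ is the main obstacle: one must track the subsequent letters, exploiting part (b) (absence of $s_n s_{n+1} s_n$ factors), full commutativity, and complete support, to eventually force the appearance of a weak reducible prefix such as $s_{n-1} s_{n-2}$, completing the contradiction and closing the case.
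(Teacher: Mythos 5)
Your parts (b) and (c) track the paper's strategy reasonably closely: for (b) the paper runs the analogue of Lemma \ref{lemma:occorrenzeuno} on $\nmap(w)$ inside $\fc(\D)$, so that Lemma \ref{lemma:condzigzag} and Corollary \ref{corollary:precompzigzag} can be quoted verbatim, whereas you run it directly in $\B$ via unstated ``$\B$-analogs''; that is a workable alternative, though those analogs would still have to be proved and your treatment of the $f_\circ$ case is only gestured at. But there are two genuine gaps. First, in (c) you claim that any prefix $z\in\su(s_{n+1}s_n\cdots s_2\s)$ with $\ell(z)\ge 2$ is a \emph{weakly} reducible prefix of $w$. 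This fails precisely when $z=s_{n+1}s_n\cdots s_2\s$: its leading pair is $s_{n+1}s_n$ with $m_{s_n,s_{n+1}}=4$, so by Remark \ref{oss}(3) weak reducibility would require $s_{n+1}s_ns_{n+1}\in\pr(w)$, which is not given. The paper handles exactly this residual case separately, writing $w=zw'$, showing $\ld(w')\subseteq\{\ti\}$, excluding $\ti$ via part (b), and then deriving a contradiction from $w=z$.

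Second, and more seriously, your part (a) is not a proof. After reducing to excluding $s_{n+1}s_n\in\pr(w)$, you label the subcase where the third letter is $s_{n-1}$ ``the main obstacle'' and propose to ``eventually force the appearance of a weak reducible prefix such as $s_{n-1}s_{n-2}$.'' That endgame cannot work: once $s_n$ sits above $s_{n-1}$ in the heap, $s_{n-1}$ can never become a left descent of $w$, so no prefix $s_{n-1}s_{n-2}$ ever appears; moreover the full descending chain $s_{n+1}s_n\cdots s_2\s$ has no weakly reducible prefix at all, so purely local prefix-chasing does not terminate in a contradiction. The paper's argument for (a) is global: it forms $v:=s_{n+2}\nmap(w)$ (or $s_{n+2}\nmap(w)s_{n+2}$ when $s_ns_{n+1}\in\su(w)$), checks that $v\in\I(\D)$ has complete support with $s_{n+1},s_{n+2}\in\ld(v)$, and invokes the classification Theorem \ref{decimo} to force $v\in\zz(\D)$, whence $\nmap(w)$ is a weak zigzag and $w\in\zz(\B)$, a contradiction. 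Some such appeal to the type-$\D$ classification, or an equally global structural argument, is the idea missing from your proposal.
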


\begin{proof}    
    For proving (a) we proceed by contradiction, so assume that $s_{n}s_{n+1}\notin \pr(w)\cup \pr(w^{-1})$. Therefore, by Remark \ref{rem:prefirrB}, $w$ has a prefix or suffix of the form $s_{n+1}s_{n}$ or $s_{n}s_{n+1}$. Without loss of generality, we assume $s_{n+1}s_n \in \pr(w)$. We define $v:=s_{n+2}\nmap(w)$ if $s_ns_{n+1}\notin \su(w)$ or $v:=s_{n+2}\nmap(w)s_{n+2}$ if $s_ns_{n+1}\in \su(w)$. In both cases, $v\in \fc(\D)$ and note that $v\in \I(\D)$ and $s_{n+1},s_{n+2}\in \ld(v)$. Therefore, since $supp(w)$ is complete we have that $supp(v)$ is also complete, so by the classification in Theorem \ref{decimo}, it follows that $v\in \zz(\D)$. But in all the cases this means that $\nmap(w)$ is a weak zigzag, so by Remark \ref{remark:weakzigBeD} it follows that $w\in \zz(\B)$, which is a contradiction.

    Assume $f_\bullet(\nmap(w))\ne 0$, we will show that $s_0,s_1\in \ld(\nmap(w))\cap \rd(\nmap(w))$. We can write $\nmap(w)=vv'$ reduced, with $\rd(v)=\{s_0,s_1\}$ and $f_\bullet(v)=1$. If $v\ne s_0s_1$, hence $v$ is a weak zigzag by Lemma \ref{lemma:condzigzag}. Moreover, since $w \in \IR(\B)$, $f_\bullet(v)=1$ and $s_{n+2}\notin \ld(\nmap(w))$, we have that $v=s_{n+1}s_n\cdots s_2s_1s_0$. We observe that $s_{n+2}\notin \ld(v')$, otherwise $s_{n+1}s_{n}s_{n+1}$ is a prefix of $w$. In particular, $s_{n+2}\nmap(w)\in \fc(\D)$ is a reduced product, so $s_{n+2}\nmap(w)$ admits $s_{n+2}v\in \zz(\D)$ as a prefix, thus $s_{n+2}\nmap(w)$ is a weak zigzag by Corollary \ref{corollary:precompzigzag}. Therefore, $\nmap(w)$ is a weak zigzag, and $w\in \zz(\B)$ by Remark \ref{remark:weakzigBeD}, but this is a contradiction. 
    So, $v=s_0s_1$ and $s_0,s_1\in \ld(\nmap(w))$. To prove that $s_0,s_1\in \rd(\nmap(w))$, we factorize $\nmap(w)=y'y$ with $\ld(y)=\{s_0,s_1\}$ and $f_\bullet(y)=1$ and we argue as above. Therefore, $s_0,s_1\in \ld(\nmap(w))\cap \rd(\nmap(w))$, thus $f_\bullet(\nmap(w))\ge 2$ since $s_2\in supp(\nmap(w))$. This implies that $\nmap(w)$ is reducible to $x\in \zz(\D)$ by Remark \ref{rem:occirr} and Theorem \ref{decimo}. Hence $\nmap(w)$ is a weak zigzag by Corollary \ref{corollary:precompzigzag}, in particular $\nmap(w)=x\in \zz(\D)$ since $s_0,s_1\in \ld(\nmap(w))\cap \rd(\nmap(w))$. But this is a contradiction because $w$ is not a complete zigzag. In conclusion, $f_\bullet(\nmap(w))=0$. 
    
    Now we prove that $f_\circ(\nmap(w))=0$, as above we factorize $\nmap(w)=v v'$ with $\rd(v)=\{s_{n+1},s_{n+2}\}$ and $f_\circ(v)=1$. If $v\ne s_{n+1}s_{n+2}$, then $v$ is a weak zigzag by Lemma \ref{lemma:condzigzag}. But this cannot be since $f_\bullet(\nmap(w))=0$, $f_\circ(v)=1$ and $w\in \IR(\B)$. 
    Therefore, $v=s_{n+1}s_{n+2}$ and $s_{n+1},s_{n+2}\in \ld(\nmap(w))$, but this is a contradiction by the definition of $\nmap$.

    Assume now $s_0,s_1\notin \ld(w)$ and let $\nmap(w)=u_0\cdots u_m$ be its CFNF. It follows that $s_0,s_1\notin supp(u_0)$, so by applying Lemma \ref{cornormalform} there exists $z\in \pr(\nmap(w))$ with \[z\in \su(s_{n+1
    } s_n\cdots s_2\s ) \] and $\ell(z)> 1$. We observe that in all the cases $z$ is a prefix of $w$ too, by the definition of the map. So the only possibility is that $z=s_{n+1} s_n\cdots s_2\s$ and we factorize $w$ as $w=zw'$ reduced product. We observe that $\ld(w')\subseteq \{\ti\}$ by $w\in \fc(\B)$ and $w\in \IR(\B)$. But if $\ti\in \ld(w')$, then $f_\bullet(\nmap(w))\ge1$, that contradict point (b). Therefore, $w'=e$ and $w=z$ but this a contradiction because $w\in \IR(\B)$. 
   
\end{proof}

\begin{Lemma}
\label{lemma:lemmaBdue}
    Let $w\in \IRc(\B) \setminus \zz(\B)$ with complete support, and let $w=u_0\cdots u_m$ be its CFNF. If $s_ns_{n+1}\in \pr(w)$, then the following hold:
    \begin{itemize}
        \item[(a)] $supp(u_0)=\{\s\} \cup \{s_3, s_5, \ldots, s_{n-2},s_n\}$;
        \item[(b)] $supp(u_1)=\{s_2, s_4, \ldots, s_{n-1},s_{n+1}\}$.
    \end{itemize}
    In particular, $n$ is odd. 
\end{Lemma}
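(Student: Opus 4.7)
The plan is to adapt the proof of Lemma~\ref{cardue} (the type-$\D$ analog), accounting for the asymmetric right end of the type-$\B$ Coxeter graph. Writing $w=u_0\cdots u_m$ for the CFNF, I would first use Lemma~\ref{lemma:lemmaBuno}(c) to pick the unique element $\s\in\{s_0,s_1\}\cap supp(u_0)$, and then use the hypothesis $s_ns_{n+1}\in\pr(w)$ together with $m_{s_n,s_{n+1}}=4$ to conclude that $s_n\in supp(u_0)$ (it is a left descent) and $s_{n+1}\in supp(u_1)$ (it sits immediately after $s_n$ in the heap and cannot be in $u_0$ since it does not commute with $s_n$). In particular $m\ge 1$.

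The core technical step would be to establish the alternating pattern on the interior. Define
\[
U_0=\{i:3\le i\le n-1,\ i\text{ odd},\ s_i\notin supp(u_0)\},\qquad U_1=\{i:2\le i\le n-1,\ i\text{ even},\ s_i\notin supp(u_1)\},
\]
and suppose for contradiction that $U_0\cup U_1$ is nonempty, with minimum $h$. If $h\in U_0$, minimality supplies either the prefix $\s s_2$ (when $h=3$, using $s_2\in supp(u_1)$) or the prefix $s_{h-2}s_{h-1}$ (when $h\ge 5$, with $s_{h-2}\in supp(u_0)$ and $s_{h-1}\in supp(u_1)$); the commutations needed to form this prefix are ensured by $s_h\notin supp(u_0)$. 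Such a prefix $st$ with $m_{s,t}=3$ is weak reducible because $tw$ contains the braid $tst$ and hence is not in $\fc(\B)$, contradicting $w\in\IR(\B)$. If $h\in U_1$, I would check $s_h\notin supp(u_0)$ (using $\s$ when $h=2$, or the minimality-given $s_{h-1}\in supp(u_0)$ when $h\ge 4$), take the minimum $k\ge 2$ with $s_h\in supp(u_k)$, and invoke Theorem~\ref{normalform}(d) to place $s_{h-1}$ or $s_{h+1}$ in $supp(u_{k-1})$. A direct type-$\B$ adaptation of Lemma~\ref{secondlemmanormalform} then produces a prefix $z$ of $w$, either in $\su(\s s_2\cdots s_{h-1})$ or in $\su(s_{n+1}s_n\cdots s_{h+1})$, with $\ell(z)=k\ge 2$. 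The initial length-$2$ segment of $z$ is either a pair of adjacent generators giving a weak reducible prefix, or it begins with $s_{n+1}$---which would force $s_{n+1}\in supp(u_0)$, incompatible with $s_n\in supp(u_0)$ since $m_{s_n,s_{n+1}}=4$. Every subcase thus contradicts $w\in\IR(\B)$.

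Once $U_0\cup U_1=\emptyset$ is established, I would combine this with $\s,s_n\in supp(u_0)$ and $s_{n+1}\in supp(u_1)$ to read off the supports, using $u_0,u_1\in\TTC(\B)$ to rule out extra generators: the other element of $\{s_0,s_1\}$ is excluded from $supp(u_0)$ by Lemma~\ref{lemma:lemmaBuno}(c), and $s_{n+1}\notin supp(u_0)$ since $m_{s_n,s_{n+1}}=4$. To derive that $n$ is odd, I would observe that if $n$ were even, the alternating pattern would place $s_{n-1}\in supp(u_0)$; but then $s_{n-1}$ and $s_n$ would both lie in the TTC set $u_0$ despite $m_{s_{n-1},s_n}=3$, a contradiction. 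With $n$ odd, combining the index ranges gives exactly (a) and (b).

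The main technical obstacle is the type-$\B$ analog of Lemma~\ref{secondlemmanormalform}; I expect its proof to go through almost verbatim because it relies only on the simply-laced subchain $\s$-$s_2$-$\cdots$-$s_n$ and on the commutations of $s_{n+1}$ with $s_2,\ldots,s_{n-1}$, both of which hold in $\B$, so the adaptation should be essentially notational.
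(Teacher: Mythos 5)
Your proposal is correct and follows the same skeleton as the paper's argument: isolate the two sets of ``missing'' interior generators, take the minimal missing index $h$, produce a short chain-shaped prefix $z$ via the normal-form machinery, and contradict weak irreducibility. The one structural difference is how the normal-form step is justified. The paper does not re-prove anything in type $\B$: it passes to $\nmap(w)\in\fc(\D)$, where Lemma \ref{lemma:lemmaBuno}(b) gives $f_\bullet(\nmap(w))=f_\circ(\nmap(w))=0$, so Lemma \ref{secondlemmanormalform} applies verbatim, and the resulting prefix $z$ (which avoids $s_{n+2}$) is pulled back to a prefix of $w$; the contradiction is then drawn exactly as you draw it, from $w\in\IRc(\B)$ and $s_ns_{n+1}\in\pr(w)$. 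You instead adapt Lemma \ref{secondlemmanormalform} directly in type $\B$, and your claim that this is ``essentially notational'' is slightly too optimistic for the branch producing $z\in\su(s_{n+1}s_n\cdots s_{h+1})$: in the type-$\D$ proof the chain is forced to stop at $\sn$ because a continuation would create the factor $s_n\sn s_n$ with $m_{s_n,\sn}=3$, contradicting full commutativity, whereas in type $\B$ one has $m_{s_n,s_{n+1}}=4$ and $s_ns_{n+1}s_n$ is a legitimate factor of an FC element, so the chain could a priori overshoot past $s_{n+1}$. This is not a fatal gap in your setting, but it needs an extra line: an overshoot forces $z$ (hence $w$) to begin either with $s_js_{j-1}$ for some $j\le n$ (a weak-reducible prefix since $m=3$) or with $s_ns_{n+1}s_n=\xi_{s_n,s_{n+1}}$ (a weak-reducible prefix by Remark \ref{oss}(3)), both contradicting $w\in\IR(\B)$; alternatively one can bound the chain length using $s_{n+1}\in supp(u_1)$. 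The paper's detour through $\nmap$ buys exactly this: it never has to confront the order-$4$ bond. Everything else in your write-up, including the deduction that $n$ is odd from $s_{n-1},s_n\in supp(u_0)$ being impossible, matches the paper.
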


\begin{proof}
    It suffices to show that, given $\nmap(w)=v_0\cdots v_m$ be its CFNF we have
    \begin{align*}
        supp(v_0)=\{\s\} \cup \{s_3, s_5, \ldots, s_{n-2}, s_n\},\quad\mbox{and} \quad
        supp(v_1)=\{s_2, s_4, \ldots, s_{n-1}, s_{n+1}\}. 
    \end{align*}
    We consider the following sets
    \begin{align*}
    V_0:=\{2j+1 \mid s_{2j+1}\notin supp(v_0),  1 \leq 2j+1 \leq n-1\},\,\, V_1:=\{ 2j \mid s_{2j}\notin supp(v_1), 1\le 2j\le n-1\}. 
\end{align*}
We need to prove that $V_0$ and $V_1$ are both empty. Following verbatim the steps of the proof of Lemma \ref{cardue}, we obtain that there exists $z\in \pr(w)$ with
\[z\in \su(\s s_2\cdots s_{h-1}) \quad \mbox{or} \quad z\in \su(s_{n+1} s_n\cdots s_{h+1})\] and $h\ge 2$ and $\ell(z)\ge 2$. Both cases leads to a contradiction since $w\in \IRc(\B)$ and $s_{n}s_{n+1}\in \pr(w)$. In particular, it follows that $n$ is odd, otherwise $s_{n-1},s_{n}\in \ld(w)$. 

\end{proof}

\begin{Proposition}
\label{proposition:weakcara}
Let $n$ be odd and $w\in \IR(\B) \setminus \zz(\B)$ with complete support. Then $w\in \carw(\B)$. 
\end{Proposition}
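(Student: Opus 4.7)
The plan is to mirror Proposition \ref{cartre}, working with the Cartier--Foata normal form $w = u_0 \cdots u_m$ and arguing by induction on $m$. Since $\car(\B) = \emptyset$ when $n$ is odd, $\zz_\bullet(\B) \subseteq \zz(\B)$, and complete support rules out $\TTC(\B)$, Theorem \ref{theorem:classredB} combined with the hypotheses gives $w \in \IRc(\B)$. By Lemma \ref{lemma:lemmaBuno}(a) and Remark \ref{oss}(2) I may assume $s_n s_{n+1} \in \pr(w)$, whence Lemma \ref{lemma:lemmaBdue} yields $supp(u_0) = \{\s, s_3, \ldots, s_n\}$ and $supp(u_1) = \{s_2, s_4, \ldots, s_{n-1}, s_{n+1}\}$. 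Complete support forces $\ti$ to appear in some later block, giving $m \geq 2$. From CFNF condition (d), fully commutativity (which forbids $\s \in supp(u_2)$, as $\s s_2 \s$ would be a braid factor), and the fact that the unique neighbor $s_n$ of $s_{n+1}$ does not lie in $supp(u_1)$, I conclude $supp(u_2) \subseteq \{\ti, s_3, s_5, \ldots, s_n\}$.

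For the base case $m = 2$, I show that this inclusion is an equality. Suppose some $s_{2j+1}$ with $j \geq 1$ is missing from $supp(u_2)$; choosing $j$ minimal (with $s_1 := \ti$ when $j = 1$) gives $s_{2j-1} \in supp(u_2)$, and $s_{2j} \in supp(u_1)$ is adjacent to both $s_{2j-1}$ and the missing $s_{2j+1}$ while commuting with every other element of $supp(u_2)$. A heap rearrangement therefore exhibits $s_{2j} s_{2j-1}$ as a suffix of $w$, giving a right weak star reduction (with $m_{s_{2j-1}, s_{2j}} = 3$) and contradicting $w \in \IR(\B)$. Hence $supp(u_2) = \{\ti, s_3, \ldots, s_n\}$ and $w \in \carw(\B)$ with $m/2 = 1$ in the sense of Definition \ref{weakcandy}. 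For the inductive step $m > 2$, I set $w_2 := u_2 \cdots u_m$ and verify, exactly as in Proposition \ref{cartre}, that (i) $w_2 \in \IR(\B)$, (ii) $supp(w_2)$ is complete, and (iii) $w_2 \notin \zz(\B)$. Right weak irreducibility in (i) is inherited from $w_2 \in \su(w)$; completeness in (ii) follows from $w_2 \in \IR(\B)$ together with its CFNF having at least two blocks, via a type-$\B$ analog of Remark \ref{rem:stared}; and (iii) follows since $\ld(w_2) = supp(u_2) \supseteq \{\ti, s_3\}$ has at least two elements, whereas a complete zigzag in $\B$ has left descent set $\{s_0, s_1\}$ or $\{s_{n+1}\}$. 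The inductive hypothesis then gives $w_2 \in \carw(\B)$, so $w = u_0 u_1 w_2 \in \carw(\B)$.

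The hard part is left weak irreducibility of $w_2$ in (i), because type $\B$ has the new $m = 4$ bond. The sub-case $m_{s,t} = 3$ proceeds as in Proposition \ref{cartre}: if $st$ is a prefix of $w_2$, then $s \in supp(u_2)$ is the unique neighbor of $t$ in $supp(u_2)$, $t \in supp(u_3) \subseteq supp(u_1)$, and $tst$ emerges as a consecutive factor of $w$, contradicting $w \in \fc(\B)$. The genuinely new sub-case is $m_{s,t} = 4$, where $s = s_n$ and $t = s_{n+1}$ (since $\ld(w_2) \subseteq \{\ti, s_3, \ldots, s_n\}$), and one must rule out $s_n s_{n+1} s_n$ being a prefix of $w_2$. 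Assume it were; then $s_n \in supp(u_2)$, $s_{n+1} \in supp(u_3)$, and $s_n \in supp(u_4)$. I would split on whether $s_{n-1} \in supp(u_3)$: if yes, the heap node $s_{n-1}(u_3)$ (whose predecessors in $w_2$'s heap necessarily include $s_n(u_2)$ placed at position $1$) must precede $s_n(u_4)$ at position $3$ of $w_2$, but the two earlier positions are occupied by $s_n(u_2)$ and $s_{n+1}(u_3)$ of different labels --- contradiction. If no, then in the heap of $w$ the chain $s_{n+1}(u_1) \prec s_n(u_2) \prec s_{n+1}(u_3) \prec s_n(u_4)$ has no other heap node forced strictly between its consecutive members (the only candidate $s_{n-1}(u_1)$ can be placed before $s_{n+1}(u_1)$), so a linear extension realizes the forbidden braid $[s_{n+1} s_n]_4 = s_{n+1} s_n s_{n+1} s_n$ as a consecutive factor of $w$, contradicting $w \in \fc(\B)$.
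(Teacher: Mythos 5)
Your proposal follows the same route as the paper's proof: reduce to $w\in\IRc(\B)$, normalize to $s_ns_{n+1}\in\pr(w)$ via Lemma \ref{lemma:lemmaBuno}, pin down $supp(u_0)$ and $supp(u_1)$ via Lemma \ref{lemma:lemmaBdue}, and induct on $m$ through $w_2=u_2\cdots u_m$. The base case and the verification that $w_2$ is weak irreducible are correct; in particular your heap analysis ruling out the prefix $s_ns_{n+1}s_n$ of $w_2$ in the $m_{s,t}=4$ sub-case is a sound and more detailed version of what the paper asserts in one line.

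The gap is in step (ii), the completeness of $supp(w_2)$ when $m>2$. You claim it ``follows from $w_2\in\IR(\B)$ together with its CFNF having at least two blocks, via a type-$\B$ analog of Remark \ref{rem:stared}.'' But the correct type-$\B$ analog (Remark \ref{rem:stared} combined with Ernst's classification of non-cancellable elements of finite type $B$) only yields that a weak irreducible element with incomplete support lies in $\TTCw(\B)$, and $\TTCw(\B)$ is strictly larger than $\TTC(\B)$: it contains the elements $s_ns_{n+1}v$ and $s_{n+1}s_nv$ with $v\in\TTC(\B)$, whose Cartier--Foata normal form has exactly two blocks. Since a priori $m$ could equal $3$ (the evenness of $m$ is part of what is being proved), $w_2=u_2u_3$ could have exactly two blocks, so your block count does not exclude $w_2=s_ns_{n+1}v$. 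This case must be ruled out separately: the paper observes that $s_{n+1}\in supp(u_1)$ and that $w_2=s_ns_{n+1}v=vs_ns_{n+1}$ would force $s_{n+1}s_ns_{n+1}\in\su(w)$, contradicting $w\in\IR(\B)$; the variant $w_2=s_{n+1}s_nv$ is excluded because $s_{n+1}\notin\ld(w_2)=supp(u_2)$. With this case added your inductive step closes, but as written it is incomplete.
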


\begin{proof}
We observe that $w\in \IRc(\B)$, otherwise $w\in \zz_\bullet(\B)$ by Theorem \ref{theorem:classredB}. Without loss of generality, by Lemma \ref{lemma:lemmaBuno}, assume $s_{n}s_{n+1}\in \pr(w)$. Let $w=u_0\cdots u_m$ be its CFNF, then we know that $supp(u_0)$ and $supp(u_1)$ are described in Lemma \ref{lemma:lemmaBdue} (a) and (b). This implies $m\ge 2$, otherwise $\s s_2$ would be a prefix of $w$, which is not possible since $w\in \IR(B)$. Since $w\in \fc(\B)$, by definition of CFNF, we have that
\begin{align}
    supp(u_2)\subseteq \{\ti\} \cup \{s_3,s_5,\ldots, s_n\}:=\mathcal{S}. \label{eq:incl}
\end{align}
Define $w_2:=u_2\cdots u_m$ and note that is in $\IR(\B)$. In fact, it is right weak reducible since it is a suffix of $w\in \IR(\B)$. Moreover, it is also left weak irreducible because if $st$ with $m_{s,t}=3$ is a prefix of $w_2$, then $s\in supp(u_2)$ and $t\in supp(u_3)$, but by CFNF we have that $supp(u_3)\subseteq supp(u_1)$, so $tst$ would be a factor of $w$, that contradicts the hypothesis of fully commutativity. On the other hand, if $sts$ with $m_{s,t}=4$ is a prefix of $w_2$, then necessarily $s=s_{n}$ and $t=s_{n+1}$, but in this way we have that $s_{n+1}s_ns_{n+1}s_n$ is a factor of $w$. 

To summarize, we have $w=u_0u_1w_2$, with $w_2=u_2\cdots u_m\in \IR(\B)$ and $m\ge 2$. Now we show by induction on $m\ge 2$ that $w\in \carw(\B)$.

Suppose $m=2$, then the inclusion in \eqref{eq:incl} is an equality. Otherwise there would exist $s'\in \mathcal{S}\setminus supp(u_2)$, $s\in supp(u_1)$, $t \in supp(u_2)$ with $m_{s,t}=m_{s',s}=3$, and the suffix $st$ would appear in $w$, that is not possible. Hence $w=u_0u_1u_2\in \carw(\B)$; its heap is depicted in Figure \ref{fig:small_candyB}.

\begin{figure}[h]
    \centering
    \includegraphics[width=0.5\linewidth]{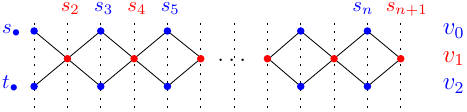}
    \caption{Heap of $u_0u_1u_2$ in the proof of Proposition \ref{proposition:weakcara}.}
    \label{fig:small_candyB}
\end{figure}

If $m>2$, we first show that $supp(w_2)$ is complete. In fact, if $supp(w_2)$ is not complete, then $w_2$ is of finite type $D$, $B$ or a product of elements of type $D$ and $B$. Hence $w_2\in \TTCw(\B)$ by Remark \ref{rem:stared} and \cite[Theorem 4.2.1]{ernst} because $w_2\in \IR(\B)$. Since $m>2$ and $s_{n+1}\in supp(u_1)$, the unique possibility is that $w_2=s_{n}s_{n+1} w'=w's_{n}s_{n+1}$ reduced product with $w'\in \TTC(\B)$. But in this case, since $s_{n+1}w'=w's_{n+1}$ and $s_{n+1}\in supp(u_1)$, we have that $s_{n+1}s_ns_{n+1}\in \su(w)$ which contradicts the weak irreducibility.

Therefore, $supp(w_2)$ is complete and $w_2\in \su(w)$, so $w_2$ is not a complete zigzag since $f_\bullet(\nmap(w))=f_\bullet(\nmap(w_2))=0$ by Lemma \ref{lemma:lemmaBuno}. So, by inductive hypothesis, $w_2$ is a weak candy element, hence $w$ is a weak candy element as well, since $w=u_0u_1w_2$. 

\end{proof}

\begin{Theorem}[Classification of irreducible elements of type $\B$]
\label{undicesimo}
\noindent
The set of irreducible elements in $\fc(\B)$ is partitioned as follows:
\begin{align*}
    \IR(\B) = \car (\B) \sqcup \carw(\B)\sqcup \zz(\B) \sqcup \TTCw(\B).
\end{align*}
If $n$ is odd, $\car(\B)=\emptyset$; if $n$ is even, $\carw(\B)=\emptyset$.
\end{Theorem}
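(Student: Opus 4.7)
The plan is to mirror the proof of the type~$\D$ classification (Theorem~\ref{decimo}), splitting the analysis of $w \in \IR(\B)$ on whether $supp(w)$ is complete and, in the complete-support case, on the parity of $n$. The bulk of the structural work has already been carried out in Lemmas~\ref{lemma:lemmaBuno}, \ref{lemma:lemmaBdue} and Proposition~\ref{proposition:weakcara}; what remains is to assemble the cases and verify disjointness.

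Suppose first that $supp(w)$ is not complete. Then $w$ lies in a proper standard parabolic subgroup $W_J$ of $\B$, which is of finite type and decomposes as a direct product of irreducible Coxeter systems of types among $A$, $D$, and $B$. Weak irreducibility is a local property depending only on the underlying diagram, so one may invoke the known classifications component by component: on simply laced factors, Remark~\ref{rem:stared} forces weak irreducible $\fc$ elements to be products of commuting generators; on a finite type $B$ factor, Ernst's classification (\cite[Theorem~4.2.1]{ernst}) identifies the weak irreducible elements as either such products or products of commuting generators preceded or followed by $s_n s_{n+1}$ or $s_{n+1} s_n$, with the commuting part avoiding $s_{n-1}, s_n, s_{n+1}$. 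These are exactly the elements of $\TTCw(\B)$, hence $w \in \TTCw(\B)$.

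Now suppose $supp(w)$ is complete. If $w \in \zz(\B)$ we are done; otherwise $w \in \IR(\B) \setminus \zz(\B)$, and Lemma~\ref{lemma:lemmaBuno} applies. Split on the parity of $n$. If $n$ is odd, then $\car(\B) = \emptyset$ by definition and $\TTC(\B)$ contains no element of complete support once $n \ge 1$ (since $s_2$ does not commute with its neighbors); Theorem~\ref{theorem:classredB} therefore excludes $w$ from $\I(\B)$, so $w \in \IRc(\B)$. After possibly passing to $w^{-1}$ via Remark~\ref{oss}(2), Lemma~\ref{lemma:lemmaBuno}(a) places us in the hypothesis of Proposition~\ref{proposition:weakcara}, which yields $w \in \carw(\B)$. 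If instead $n$ is even, then $\carw(\B) = \emptyset$, and Lemma~\ref{lemma:lemmaBdue} (whose conclusion requires $n$ odd, after passing to $w^{-1}$ if needed) forbids $w \in \IRc(\B)$. Hence $w \in \I(\B)$, and Theorem~\ref{theorem:classredB} together with complete support and $w \notin \zz(\B)$ gives $w \in \car(\B)$.

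Finally, the four families are pairwise disjoint: $\TTCw(\B)$ consists of elements with incomplete support, whereas $\car(\B)$, $\carw(\B)$, and $\zz(\B)$ all have complete support; $\car(\B)$ and $\carw(\B)$ are non-empty in opposite parities of $n$; and candies are distinguished from complete zigzags by their left descent sets, which for candies contain odd-indexed generators starting from $s_3$, while for zigzags $\ld(w)$ is either $\{s_0, s_1\}$ or $\{s_{n+1}\}$. The main delicate point in the proof is the incomplete-support case, where one must carefully match Ernst's classification for each finite-type $B$ component with the precise definition of $\TTCw(\B)$, including the prefix/suffix symmetry and the support restriction on $v$.
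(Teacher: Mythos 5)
Your proposal is correct and follows essentially the same route as the paper: incomplete support is handled via Remark \ref{rem:stared} and Ernst's classification in finite type $B$, and the complete-support case splits on the parity of $n$, using Proposition \ref{proposition:weakcara} when $n$ is odd and Lemmas \ref{lemma:lemmaBuno}, \ref{lemma:lemmaBdue} together with Theorem \ref{theorem:classredB} when $n$ is even. The explicit disjointness check you add is a harmless (and correct) supplement that the paper leaves implicit.
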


\begin{proof}
Let $w\in \IR(\B)$, if its support is not complete then it is of type $D$, $B$ or a product of elements of type $D$ and $B$. 
By Remark \ref{rem:stared} and \cite[Theorem 4.2.1]{ernst}, we have that $w\in \TTCw(\B)$. Assume that $supp(w)$ is complete and $w$ is not a complete zigzag; 

if $n$ is odd, then $w\in \carw(\B)$, by Proposition \ref{proposition:weakcara}.
Otherwise, if $n$ is even by Lemmas \ref{lemma:lemmaBuno} and \ref{lemma:lemmaBdue}, we have that $w\in \I(\B)$ so $w\in \car(\D)$ by Theorem \ref{theorem:classredB} since $w\notin \zz(\B)$. 
\end{proof}

\section{Decorated diagrams}\label{sec:diagrams}

\subsection{Temperley--Lieb diagrams}
The following paragraph provides a brief overview of the classical Temperley--Lieb diagrams. For a more comprehensive treatment, we refer the reader to \cite[\S 3]{ErnstDiagramI} and the references therein. A \textit{pseudo k-diagram} consists of a finite number of disjoint plane curves, called \textit{edges}, embedded in the standard $k$-box, that is a rectangle with $2k$ marked points called \textit{nodes} or \textit{vertices}; we will refer to the top of the rectangle as the \textit{north face} and to the bottom as the \textit{south face}. The nodes are endpoints of edges. All other embedded edges must be closed (isotopic to circles) and disjoint from the box. We refer to a closed edge as a \textit{loop}. It follows that there cannot exist isolated nodes and that a single edge starts from each node (an example is given in Figure \ref{7box}). A \textit{non-propagating edge} is an edge that joins two nodes of the same face, while a \textit{propagating edge} is an edge that joins a node of the north face to a node of the south face. If an edge joins the nodes $i$ and $i'$ we call it a \textit{vertical edge}. We denote by $\mathbf{a}(D)$ the number of non-propagating edges on the north face of a concrete pseudo $k$-diagram $D$. The pseudo $k$-diagrams are defined up to isotopy equivalence; denote the set of the pseudo $k$-diagrams by $T_k(\emptyset)$.

\begin{figure}[hbtp]
\centering
\includegraphics[scale=0.4]{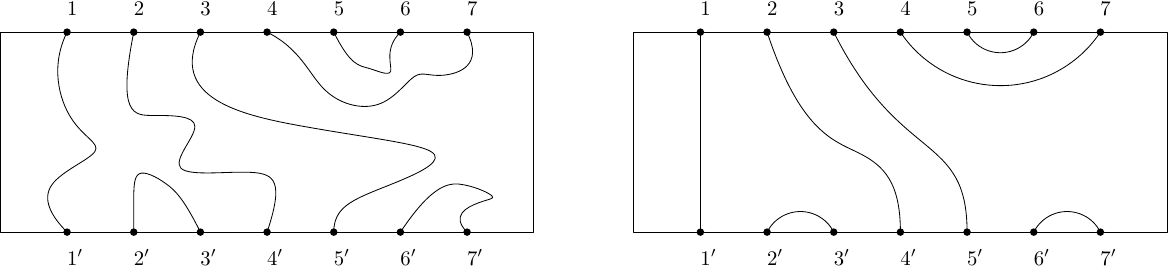}
\caption{Two equivalent pseudo 7-diagrams.}
\label{7box}
\end{figure}

In this work, we are interested in a decorated generalization of the family of classical Temperley--Lieb diagrams, which we refer to as \textit{$LR$-decorated diagrams of type $\widetilde{D}$}. 
We begin by recalling the basic notions required for their definition, referring to \cite[\S 3]{BFS}, where these diagrams were first introduced, for a detailed treatment.
Briefly, an $LR$-decorated pseudo $k$-diagram is a pseudo $k$-diagram $D$ whose edges are equipped with finite sequences of \textit{decorations} from the set $\Omega=\{\bullet, \circ\}$, subject to a collection of constraints. These rules, labeled (D0)–(D4) in \cite[\S 2]{BFS}, specify the admissible configurations of decorations. We denote the set of $LR$-decorated pseudo $k$-diagrams by $\TLR$.
If an edge \textit{e} is decorated, we read off the sequence of decorations left to right if $e$ is a non-propagating edge, up to down if $e$ is a propagating edge.
For a loop edge $e$, the sequence is read according to any arbitrarily chosen starting point and any direction around the loop. 

\begin{Definition}
 Let $D$ be a pseudo $k$-diagram with $\ab(D)=1$, decorated with at least one decoration in $\Omega$ and such that the unique non-propagating edge and loops (if any) have at most one type of decoration. A $L$-\textit{strip} (respectively $R$-\textit{strip}) is a part of the diagram delimited by two horizontal lines that contains at least one $\bullet$-decoration (respectively $\circ$-decoration) and no $\circ$-decorations (respectively $\bullet$-decorations). 
\end{Definition}

\begin{Definition}
We define $\PLR$ to be the free $\Zd$-module having the elements of $\TLR$ as a basis.
\end{Definition}

Now we define the product of two elements of $\TLR$ as the concatenation, and then extend this bilinearly to define a multiplication in $\PLR$. To concatenate two diagrams $D,D'\in \TLR$, we place $D'$ on top of $D$ so that node $i$ of $D$ coincides with node $i'$ of $D'$, conjoin adjacent blocks 
and then rescale vertically by a factor of $1/2$. 

\begin{figure}[hbtp]
\centering
\includegraphics[scale=0.6]{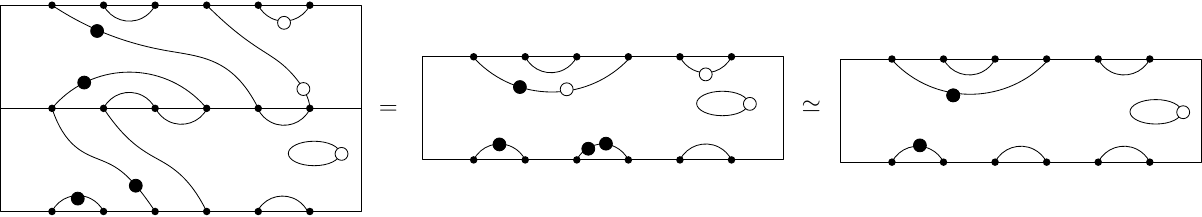}
\caption{Product in $\mathcal{P}^{LR}_6$ and reduction in $\widehat{\mathcal{P}}^{LR}_6$.}
\label{prodec}
\end{figure}

The next result is proved in \cite[Proposition 2.5]{BFS}.

\begin{Proposition}\label{decoralgebra}
The module $\PLR$ with the aforementioned product is a well-defined associative $\Zd$-algebra, with identity element the pseudo $k$-diagram $I_k$ made of $k$ undecorated vertical edges. 
\end{Proposition}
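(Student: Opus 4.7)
The plan is to verify the three requirements for $\PLR$ to be a well-defined associative $\Zd$-algebra with identity: (i) the concatenation product is closed on $\TLR$ and extends bilinearly to $\PLR$, (ii) the product is associative, and (iii) $I_k$ is a two-sided identity. Since the statement is essentially quoted from \cite[Proposition 2.5]{BFS}, the aim is a proof that records the verifications rather than derives anything new.

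For (i), I would argue that if $D, D' \in \TLR$ and we stack $D'$ above $D$, then edges meeting at the $k$ middle nodes are joined into longer edges, possibly producing new loop edges. Decoration sequences on joined edges are obtained by concatenating the original sequences in the order dictated by the standard reading convention (top-to-bottom for propagating edges, left-to-right for non-propagating). I would then check each of the defining rules (D0)--(D4) case by case, distinguishing according to the types of the joined edges (propagating--propagating, propagating-with-non-propagating on either face, etc.), and show that, because $D$ and $D'$ already satisfy the rules, the resulting decoration sequences on edges and loops remain admissible. Vertical rescaling by $1/2$ returns the diagram to the standard $k$-box, and well-definedness up to isotopy of both factors is immediate since concatenation respects isotopy.

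Associativity is essentially geometric: concatenating three diagrams $D_1, D_2, D_3$ produces, before rescaling, a rectangle of height $3$ whose edges and decoration sequences depend only on the vertical ordering of the factors, not on the order in which two consecutive boxes are amalgamated. Hence $(D_1 D_2) D_3$ and $D_1 (D_2 D_3)$ give isotopic $LR$-decorated $k$-diagrams after rescaling, and bilinear extension yields associativity on $\PLR$. For the identity, $I_k$ consists of $k$ undecorated vertical edges and no loops; stacking $I_k$ above or below any $D$ merely prolongs the edges of $D$ that reach the corresponding face, adding no decorations and preserving connectivity, so the product is isotopic to $D$.

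The main obstacle is the closure check in step (i). The delicate point is that concatenation can transform two propagating edges of $D$ and $D'$ into a single non-propagating edge of the product, whose decoration sequence is the concatenation of the two original sequences; the reading direction on the composite edge must be reconciled with the left-to-right convention for non-propagating edges, and the admissibility conditions on alternation of $\bullet$ and $\circ$ decorations must be verified in this new configuration. A secondary subtlety is that new loops may inherit decoration sequences from several joined edges, and one must check that the defining rules on loops (in particular, the constraint that they carry at most one type of decoration in the single non-propagating edge setting) continue to hold for all elements of the basis $\TLR$, independently of the choice of representatives.
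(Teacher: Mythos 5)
The paper does not actually prove this proposition: it is imported verbatim from \cite[Proposition 2.5]{BFS}, and the text immediately preceding the statement says only ``The next result is proved in [BFS, Proposition 2.5].'' So there is no in-paper argument to compare yours against. Your outline follows the route one would expect such a proof to take (and that the cited reference takes): closure of $\TLR$ under concatenation, bilinear extension, associativity by the geometric observation that stacking three boxes is independent of the bracketing, and the identity check for $I_k$. The associativity and identity parts of your write-up are fine as they stand.

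The gap is that the one substantive step --- closure, i.e.\ that the concatenation of two diagrams satisfying the admissibility rules (D0)--(D4) again satisfies them --- is announced but never carried out. You correctly isolate the delicate configurations (two propagating edges merging into a non-propagating edge of the product, with the attendant reversal of the reading convention; new loops inheriting decoration sequences from several joined edges), but you then stop at naming them as ``the main obstacle'' rather than resolving them. Since everything else in the proposition is routine, this case analysis \emph{is} the proof; without it the proposal is a proof plan rather than a proof. To complete it you would need to state the rules (D0)--(D4) explicitly and check, for each way two edges can be joined at a middle node (propagating--propagating, propagating--non-propagating on the shared face, non-propagating--non-propagating closing into a loop), that the concatenated decoration sequence is still admissible, keeping track of which face each decorated block is anchored to. That verification lives in \cite{BFS} and is not reproduced in this paper, so you cannot appeal to the present text for it.
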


Note that $\PLR$ is an infinite dimensional algebra since, for instance, when $D$ has a single propagating edge, there is no limit to the number of decorations on such edge, or when $D$ has no propagating edges, there is no limit to the number of loops with both decorations. 

We introduce the notation for three types of loop edges in Figure \ref{loop-D}.

 \begin{figure}[h]
	\centering
	\includegraphics[width=0.45\linewidth]{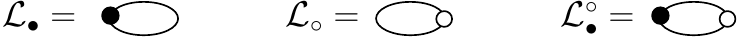}
	\caption{Special types of loops.}
	\label{loop-D}
 \end{figure}

We define a \textit{reduction system} as depicted in Figure \ref{rel}, where the lines represent a portion of an edge (loops included). 

\begin{figure}[hbtp]
\centering
\includegraphics[scale=0.6]{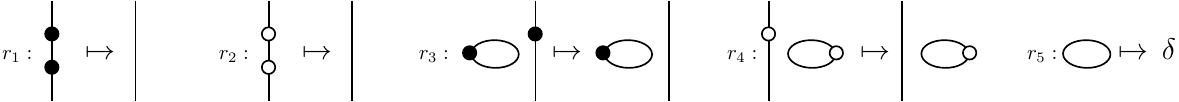}
\caption{The reduction system in $\PLR$.}
\label{rel}
\end{figure}

More precisely the reductions $r_i$ described in Figure \ref{rel} are defined as follows:
\begin{enumerate}   
    \item $r_1$ and $r_2$ reduce the number of decorations and they are applied to adjacent decorations in the same block;
     \item if $\bO$ (respectively, $\wO$) occurs in $D$, then $r_3$ (respectively, $r_4$) removes a $\bullet$ (respectively, $\circ$) on another edge (loop edge included) with the only restriction that if $\ab(D)=1$ then $\bO$ (respectively, $\wO$) and $\bullet$ (respectively, $\circ$)  must belong to the same strip. 
     \item $r_5$ removes an undecorated loop edge multiplying $D$ by a factor $\delta$.
\end{enumerate} 
Clearly, if $\ab(D)>1$, a finite sequence of applications of $r_3$ (respectively, $r_4$) removes all $\bullet$ (respectively, $\circ$) in $D$ except the one on the last remaining loop, while in the case $\ab(D)=1$, it only removes all $\bullet$ (respectively, $\circ$) that belong to the same $L$-strip (respectively, $R$-strip).
\smallskip

Let $\PLRk$ be the quotient of $\PLR$ modulo the two-sided ideal generated by the relations determined by the reductions in Figure \ref{rel}. 

\smallskip
Now define the \textit{simple diagrams} $D_0, \ldots, D_{n+2}$ as in Figure \ref{simple}. Since they are irreducible, they are basis elements of $\PLRn$.  
\begin{figure}[hbtp]
\centering
\includegraphics[scale=0.6]{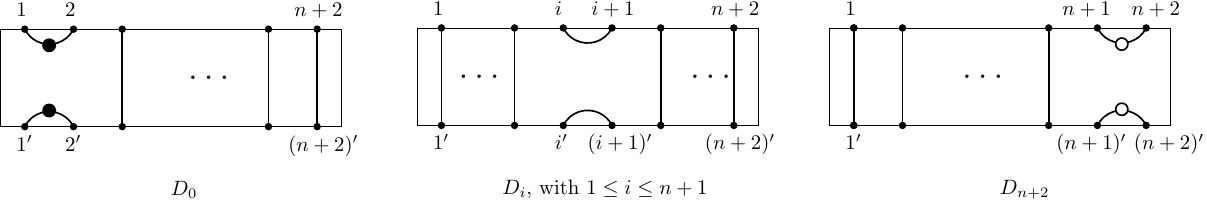}
\caption{The simple diagrams.}
\label{simple}
\end{figure}
It is easy to prove that the simple diagrams satisfy the relations (d1)-(d3)  listed in Definition~\ref{def:tl-algebras}, where $b_i$'s are replaced by $D_i$'s.
Moreover, we call \textit{simple edge} a non-propagating edge appearing in a simple diagram. The simple edges are denoted by 
\begin{equation}
\label{eq:edgenotation}
    \{1,2\}_{\bullet}, \ \{1',2'\}_{\bullet}, \ \{n+1,n+2\}_{\circ},\ \{(n+1)',(n+2)'\}_{\circ}, \ \{i,i+1\}, \ \{i',(i+1)'\},
\end{equation} 
where $1\le i\le n+1$ and the possible decorations are underscored. 

\begin{Definition}
Let $\Di(\D)$ be the $\Zd$-subalgebra of $\PLRn$ generated as a unital algebra by the simple diagrams with multiplication inherited by $\PLRn$.
\end{Definition}

We define the $\Zd$-algebra homomorphism 
\begin{eqnarray}
    \tilde{\theta}_D: \tl(\D) & \longrightarrow & \Di(\D) \\
    b_i &\mapsto & D_i \nonumber
\end{eqnarray} 
for all $i=0,\ldots,n+2$, where $\tl(\D)$ is the $\Zd$-algebra defined in Definition \ref{def:tl-algebras}. If $w=s_{i_1}\cdots s_{i_k}\in \fc(\D)$, then $\tilde{\theta}_D(b_w)=D_w:=D_{i_1}\cdots D_{i_k}$ is well-defined because the relations (d1)-(d3) listed in Definition \ref{def:tl-algebras}, are satisfied by the simple diagrams. By \cite[Theorem 5.14]{BFS}, $\tilde{\theta}_D$ is an algebra isomorphism.
In the present work, we provide an alternative algebraic proof of this result, based on the structure of the irreducible elements of type~$\D$.

\subsection{Irreducible diagrams}
\begin{Lemma}
\label{discesadelta}
Let $w\in \fc(\D)$, if $s\in \ld(w)$ (resp. $s\in \rd(w)$) then $D_sD_w=\delta D_w$ (resp. $D_wD_s=\delta D_w$).
\end{Lemma}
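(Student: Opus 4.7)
The plan is to reduce the claim to a one-line computation using the quadratic relation for simple diagrams together with the characterization of left/right descents in terms of reduced expressions.

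First, recall from Section~\ref{se1} that $s \in \ld(w)$ if and only if there exists a reduced expression of $w$ starting with $s$. So I would write $w = s\,s_{i_2}\cdots s_{i_k}$ as a reduced expression, where $k=\ell(w)$. By the well-definedness of $D_w$ noted just after the definition of $\tilde{\theta}_D$ (which rests on the fact that the simple diagrams $D_0,\ldots,D_{n+2}$ satisfy the defining relations (d1)--(d3) of $\tl(\D)$), we have
\[
D_w = D_s\, D_{i_2}\cdots D_{i_k}.
\]

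Next, I would left-multiply by $D_s$ and invoke relation (d1) applied to the simple diagrams, namely $D_s^2 = \delta D_s$, which is easily checked directly on the diagrams $D_i$ (it amounts to concatenating two copies of a simple diagram and producing one undecorated loop that, via reduction $r_5$, yields the factor $\delta$). This gives
\[
D_s D_w \;=\; D_s^2\, D_{i_2}\cdots D_{i_k} \;=\; \delta\, D_s\, D_{i_2}\cdots D_{i_k} \;=\; \delta\, D_w.
\]
The case $s \in \rd(w)$ is completely symmetric: take a reduced expression of $w$ ending in $s$ and right-multiply by $D_s$.

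There is essentially no obstacle here, since both ingredients (the descent characterization and the quadratic relation $D_s^2=\delta D_s$) have already been established earlier in the paper. The only thing worth flagging is that one must rely on $D_w$ being well-defined, independent of the reduced expression chosen, which is precisely what was observed in the definition of $\tilde{\theta}_D$.
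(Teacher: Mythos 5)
Your proposal is correct and follows essentially the same route as the paper: factor $w=su$ with $\ell(w)=\ell(u)+1$, so $D_w=D_sD_u$, and then apply the quadratic relation $D_s^2=\delta D_s$ (relation (d1) for the simple diagrams). The extra remarks on the well-definedness of $D_w$ are accurate but not needed beyond what the paper already records after the definition of $\tilde{\theta}_D$.
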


\begin{proof}
If $s\in \ld(w)$, then $w=su$, $u\in \fc(\D)$ and $\ell(w)=\ell(u)+1$. Therefore, $D_w=D_sD_u$, so it follows that \[D_sD_w=D_sD_sD_u=\delta D_sD_u=\delta D_w.\] The same argument can be applied to $s\in \rd(w)$. 
\end{proof}

\begin{Lemma}
\label{discesearchi}
Let $w \in \fc(\D)$. 
\begin{enumerate}
    \item[(a)] For $1\le i\le n+1$, if $s_i \in \ld(w)$, then $\{i,i+1\}$ appears in $D_w$.
    \item[(b)] If $s_0 \in \ld(w)$, then either $\{1,2\}_{\bullet}$ appears in $D_w$, or $\{1,2\}$ appears in $D_w$ and the numbers of occurrences of $\bO$ in $D_w$ and $D_{0}D_{w}$ are the same. 
    \item[(c)] If $s_{n+2} \in \ld(w)$, then either $\{n+1,n+2\}_{\circ}$ appears in $D_w$, or $\{n+1,n+2\}$ appears in $D_w$ and the numbers of occurrences of $\wO$ in $D_w$ and $D_{n+2}D_{w}$ are the same.
\end{enumerate}
\end{Lemma}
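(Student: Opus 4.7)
The plan is to prove each part by starting from a length-additive decomposition, namely $w = s_i u$ (for part (a)), $w = s_0 u$ (for part (b)), or $w = s_{n+2} u$ (for part (c)), which yields $D_w = D_{s_i} D_u$, $D_0 D_u$, or $D_{n+2} D_u$, respectively, and then carefully tracking the top non-propagating simple edge of the leftmost factor throughout the stacking with $D_u$ and all subsequent reductions $r_1$--$r_5$.

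For part (a), the key observation is that for $1 \le i \le n+1$ the north face of $D_{s_i}$ carries the undecorated non-propagating edge $\{i, i+1\}$, whose two endpoints lie entirely on top of the box and never reach the middle interface with $D_u$. Hence concatenation cannot alter this edge, and since it is undecorated, none of the reductions $r_1$--$r_5$ can act on it. It therefore persists in $D_w$, proving (a).

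For part (b), the argument is analogous but requires extra care with decorations. The simple diagram $D_0$ has $\{1,2\}_{\bullet}$ on its north face. Stacking $D_u$ beneath $D_0$ preserves both the location of this edge and its single $\bullet$, so the only reduction that could modify it is $r_3$, which requires a $\bO$ loop elsewhere and is further constrained by the strip condition when $\mathbf{a}(D_0 D_u) = 1$. If $r_3$ is never triggered at this edge during the normalisation of $D_0 D_u$, then $\{1,2\}_{\bullet}$ appears unchanged in $D_w$, giving the first alternative. Otherwise, $r_3$ strips the $\bullet$ and the undecorated edge $\{1,2\}$ appears in $D_w$. In this second case, the claimed equality of $\bO$-counts follows immediately from the algebra relation $D_0^2 = \delta D_0$ together with Lemma \ref{discesadelta}: indeed $D_0 D_w = D_0 D_0 D_u = \delta D_0 D_u = \delta D_w$, so the reduced diagrams underlying $D_w$ and $D_0 D_w$ coincide and in particular have exactly the same number of $\bO$ loops.

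Part (c) is symmetric to part (b), with $D_{n+2}$, $\{n+1,n+2\}_{\circ}$, $\wO$, and $r_4$ playing the roles of $D_0$, $\{1,2\}_{\bullet}$, $\bO$, and $r_3$; the same factorisation and reduction-tracking argument applies verbatim. The main obstacle I anticipate is justifying rigorously that in the second case of (b) the strip constraint in $r_3$ does not obstruct the anticipated removal of the $\bullet$ on $\{1,2\}_{\bullet}$, and checking that no other reduction introduces or eliminates stray $\bO$ loops along the way. Once this book-keeping is in place, both (b) and (c) reduce to the purely algebraic identity $D_0 D_w = \delta D_w$ (resp.\ $D_{n+2} D_w = \delta D_w$).
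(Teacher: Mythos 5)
Your proof is correct, and for part (a) and the main dichotomy in parts (b) and (c) it follows the paper's argument exactly: factor $w=s_0u$ as a reduced product, observe that $\{1,2\}_\bullet$ sits on the north face of the concatenation $D_0\cdot D_u$ untouched by the stacking, and split on whether reduction (r3) strips its decoration. The one genuine divergence is how you establish the equality of $\bO$-counts in the second alternative. The paper does this by explicit diagram bookkeeping: it notes that the very occurrence of $\bO$ in $D_w$ that removed the $\bullet$ from $\{1,2\}_\bullet$ also undecorates (and hence, via (r5), eliminates) the new loop created when $D_0$ is stacked on $D_w$, so no new $\bO$ survives. You instead invoke the purely algebraic identity $D_0D_w=D_0D_0D_u=\delta D_0D_u=\delta D_w$ from Lemma \ref{discesadelta} and read off the count from the coincidence of the reduced diagrams. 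Your shortcut is cleaner and is legitimate, since Lemma \ref{discesadelta} precedes this lemma and its proof is independent of it; it does, however, lean on the uniqueness of normal forms in $\PLRn$ (so that ``the number of $\bO$ in $D_0D_w$'' is well defined and equals the count in $\delta D_w$), which the paper assumes throughout but which your phrasing makes load-bearing. The worry you raise about the strip constraint in (r3) is not an actual gap: your case split is exhaustive by construction, and the constraint merely decides which alternative of the lemma you land in, exactly as in the paper's ``if (r3) is applicable / otherwise'' split.
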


\begin{proof}
Note that $(a)$ follows directly by the definition of diagram concatenation. 

If $w=s_0u$ is a reduced product, before applying any reductions (if any), the edge $\{1,2\}_\bullet$ appears in the concatenation of $D_0$ with $D_u$. If $D_w$ does not contain a $\bO$ such that reduction (r3) is applicable, then the thesis follows. Otherwise, after applying (r3) to the edge $\{1,2\}_\bullet$, it looses its decoration, and $\{1,2\}$ is in $D_w$. Moreover, the same occurrence of $\bO$ gives rise to a reduction (r3) of the new occurrence of $\bO$, obtained from the concatenation of $D_0$ with $D_w$, to an undecorated loop. Note that if $\mathbf{a}(D_w)>1$, then $\# \bO=1$. Hence the numbers of $\bO$ in $D_w$ and $D_{0}D_{w}$ are the same. This proves (b); (c) is analogous.
\end{proof}

A direct consequence of Lemma $\ref{discesearchi}$ is that $D_w=I_{n+2}$ if and only if $w=e$.

\begin{Proposition}
\label{diagrammiasscomm}
Let $w\in \TTC(\D)$, then:
\begin{enumerate}
    \item[(a)] the only types of edges allowed in $D_w$ are simple edges and vertical edges $\{j,j'\}$ with $1\le j \le n+2$;
    \item[(b)] the only types of loops allowed in $D_w$ are $\bO$ and $\wO$;
    \item[(c)] $f_\bullet(w)=\# \bO\le 1$ and $f_\circ(w)=\# \wO \le 1$ in $D_{w}$.
\end{enumerate}
\end{Proposition}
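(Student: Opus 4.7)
The plan is to induct on $\ell(w)$, exploiting that $w \in \TTC(\D)$ is a product $s_{j_1} \cdots s_{j_k}$ of pairwise commuting, and hence distinct, generators, whose support forms an independent set in the Coxeter graph of $\D$. Apart from the two unordered pairs $\{s_0, s_1\}$ and $\{s_{n+1}, s_{n+2}\}$ (both commuting pairs in that graph), distinct elements of $supp(w)$ therefore act on disjoint position intervals of the diagram. The base case $w = e$ gives $D_w = I_{n+2}$, which trivially satisfies (a), (b), (c). For the inductive step, I would fix any $s_j \in \ld(w)$, write $w = s_j u$ with $u \in \TTC(\D)$ and $\ell(u) = \ell(w)-1$, apply the inductive hypothesis to $D_u$, and analyze the concatenation $D_j D_u$.

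The key geometric observation is that, since $s_j$ commutes with every generator in $u$, no generator adjacent to $s_j$ in the Coxeter graph lies in $supp(u)$. Hence in $D_u$ the interface nodes at positions $j, j+1$ (and $j', (j+1)'$) are endpoints of vertical edges, \emph{except} when $j \in \{0, 1, n+1, n+2\}$ and the companion generator (respectively $s_1, s_0, s_{n+2}, s_{n+1}$) lies in $supp(u)$. In the generic case the concatenation simply installs the simple edges of $D_j$ at the interface positions and leaves the remaining edges of $D_u$ intact, so (a) and (b) transfer and no new loop is produced. In the exceptional case, the inductive description of $D_u$ already places a matching pair of simple edges at the affected end, and the gluing creates exactly one new loop, decorated with $\bullet$ when $j \in \{0, 1\}$ or $\circ$ when $j \in \{n+1, n+2\}$. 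Any extra $\bullet$ or $\circ$ surviving on the resulting end-edges is then absorbed by reduction $r_3$ (or $r_4$), yielding the canonical form claimed.

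Property (c) then follows by a direct inspection of the induction: a $\bO$-loop is created precisely when both $s_0$ and $s_1$ lie in $supp(w)$, in which case the factor $s_0 s_1$ appears exactly once in $w$ (each generator occurs once in the $\TTC$ expression), so $f_\bullet(w) = 1 = \#\bO$; otherwise no $\bO$-loop is produced and $f_\bullet(w) = 0 = \#\bO$. The statement for $\wO$ and $f_\circ(w)$ is entirely symmetric.

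The main obstacle is more bookkeeping than conceptual: one must verify that applying $r_1$--$r_4$ to each intermediate concatenation normalizes it to the canonical form described, and in particular that no mixed-color or multi-decorated loop can ever arise. This is guaranteed by the $\TTC(\D)$ hypothesis, which forbids any of $s_0, s_1, s_{n+1}, s_{n+2}$ from appearing more than once, so each end of the diagram contributes at most one monochromatic loop throughout the induction.
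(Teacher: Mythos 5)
Your argument is correct and rests on exactly the same observation as the paper's proof: the simple edges of distinct commuting generators are either disjoint or share both vertices, the latter occurring only for the pairs $\{s_0,s_1\}$ and $\{s_{n+1},s_{n+2}\}$, so concatenation yields only simple edges, vertical edges, and at most one $\bO$ and one $\wO$, matching $f_\bullet(w)$ and $f_\circ(w)$. The paper states this as a single direct observation on the full product $D_{i_1}\cdots D_{i_k}$ rather than as an induction on $\ell(w)$, but that is only an organizational difference.
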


\begin{proof}
Let $w = s_{i_1} \cdots s_{i_k}$ reduced with $ m_{s_{i_j}, s_{i_l}} = 2 $ for all $ j \ne l $, then $D_w = D_{i_1} \cdots D_{i_k}$. Observe that the simple edges associated with two distinct commuting generators, if they share a vertex, then they share both vertices, and the generators must be either $ s_0 $ and $ s_1 $, or $ s_{n+1} $ and $ s_{n+2} $. For this reason, the only allowed edges in $ D_w $ are either simple non-propagating edges, or edges of the form $ \{j, j'\} $ with $ 1 \le j \le n+2 $, and the only allowed loops are those of the form $ \bO $ and $ \wO $.

Moreover, it can be observed that a loop $ \bO $ is present in $ D_w $ if and only if $ s_0, s_1 \in supp(w) $, due to the earlier observation on possible shared nodes between simple edges associated with commuting generators. Similarly, the same holds for $ \wO $ and the occurrences of $ s_{n+1} $ and $ s_{n+2} $. Therefore, we conclude that $f_\bullet(w) = \# \bO$ and $f_\circ(w) = \# \wO$ in $ D_w $.

\end{proof}

\begin{figure}[h!]
    \centering
    \includegraphics[width=0.4\linewidth]{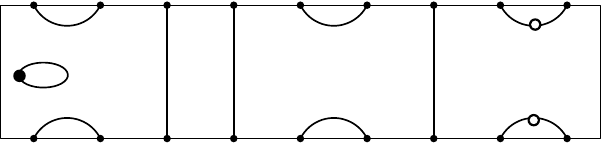}
    \caption{$D_w$ with $w=s_0s_1s_5s_9$ in $\TTC(\widetilde{D}_9)$.}
    \label{excomm}
\end{figure}

\begin{Proposition}
\label{diagrammiasscaramelle}
Let $n$ be even and $w=u_0\cdots u_m\in \car(\D)$. Then any loop in $D_w$ is of the form $\bwO$, and the number of occurrences of $\bwO$ is $m/2$.
\end{Proposition}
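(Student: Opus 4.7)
The plan is to proceed by induction on $m/2 \ge 1$, strengthened so as to also track the non-propagating edges at the top and bottom of $D_w$: they will be $\{1,2\}_{x_{m/2}}, \{3,4\}, \ldots, \{n+1,n+2\}_{y_{m/2}}$ on top and $\{1,2\}_{x_0}, \{3,4\}, \ldots, \{n+1,n+2\}_{y_0}$ on the bottom. A preliminary structural observation is that, since $n$ is even, $u_{2i}$ is a product of commuting generators whose simple edges $\{1,2\}_{x_i}, \{3,4\}, \ldots, \{n+1,n+2\}_{y_i}$ cover every node, so $D_{u_{2i}}$ has no propagating edges. By contrast, $D_{u_{2i+1}}$ has non-propagating edges $\{2,3\}, \{4,5\}, \ldots, \{n,n+1\}$ together with exactly two propagating edges $\{1, 1'\}$ and $\{n+2, (n+2)'\}$.

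For the base case $m=2$, I first compute $D_{u_0 u_1}$. In the middle region, the edges of $D_{u_0}$ at its top and of $D_{u_1}$ at its bottom interlock into a single path $1 - 2 - 3 - \cdots - (n+2)$ picking up the decoration $x_0$ at the $\{1,2\}$-step and $y_0$ at the $\{n+1,n+2\}$-step; the two propagating edges of $D_{u_1}$ then carry the endpoints of this path to the top, producing a new non-propagating edge $\{N_1, N_{n+2}\}$ carrying the decorations $x_0$ and $y_0$, and no loop is created at this stage. Composing next with $D_{u_2}$, the middle region combines this $\{1, n+2\}$-edge and the $\{2,3\}, \{4,5\}, \ldots, \{n,n+1\}$ edges of $D_{u_0 u_1}$ with the edges $\{1,2\}_{x_1}, \{3,4\}, \ldots, \{n+1,n+2\}_{y_1}$ of $D_{u_2}$, and these close into a single cycle through all $n+2$ middle nodes. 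The four decorations $x_0, y_0, x_1, y_1$ appear along this loop, and since $x_0 \ne x_1$ and $y_0 \ne y_1$ by Definition \ref{caramella}, exactly one of $\{x_0, x_1\}$ is $s_0$ and one of $\{y_0, y_1\}$ is $s_{n+2}$, so the loop carries exactly one $\bullet$ and one $\circ$, hence is $\bwO$.

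For the inductive step with $m \ge 4$, I set $w' = u_2 u_3 \cdots u_m$, which again lies in $\car(\D)$ with $m-2$ factors. The inductive hypothesis provides $(m-2)/2$ loops of $D_{w'}$, all $\bwO$, and its bottom edges coincide with those of $D_{u_2}$ because $D_{u_2}$ has no propagating edges. The concatenation $D_w = D_{u_0 u_1} \cdot D_{w'}$ then yields a middle region of exactly the same form as in the base case, so it creates one additional $\bwO$ loop while leaving the pre-existing loops of $D_{w'}$ untouched (they are topologically disjoint from the nodes). This produces a total of $m/2$ loops of type $\bwO$ and the claimed top/bottom non-propagating edges, completing the induction.

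Finally, I would check that no reduction from Figure \ref{rel} applies, so that the count is the one in $\PLRn$: reductions $r_1, r_2$ need two adjacent same-colored decorations on a single edge, but each $\bwO$ loop carries exactly one $\bullet$ and one $\circ$, and every other decorated edge carries a single decoration; $r_3$ and $r_4$ require a pure $\bO$ or $\wO$ loop, none of which appear; and $r_5$ removes an undecorated loop, of which there are none. The main obstacle is the combinatorial bookkeeping in the middle region, i.e., showing that the horizontal edges of the two stacked layers interlock into a single cycle rather than several disjoint components; this depends crucially on the parity of $n$, which forces the edge families $\{\{2,3\},\{4,5\},\ldots,\{n,n+1\}\}$ and $\{\{1,2\},\{3,4\},\ldots,\{n+1,n+2\}\}$ to glue together into exactly one cycle through all middle nodes.
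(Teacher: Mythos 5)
Your proof is correct and takes essentially the same route as the paper's: a direct diagrammatic computation showing that each pair of consecutive full rows of the candy closes up exactly one loop carrying one $\bullet$ and one $\circ$ (the paper groups the concatenation as $D_{u_0}D_{u_1u_2}\cdots D_{u_{m-1}u_m}$ and appeals to a figure, while you group as $D_{u_0u_1}D_{u_2\cdots u_m}$ and run an explicit induction). Your added bookkeeping — tracing the single cycle through all $n+2$ middle nodes, using $x_i\neq x_{i+1}$, $y_i\neq y_{i+1}$ to get exactly one decoration of each colour, and checking that no reduction $r_1$--$r_5$ applies — makes explicit what the paper leaves to the reader.
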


\begin{proof}
Recall that $m$ is even since $w\in \car(\D)$.
Without loss of generality assume $s_0,s_{n+2}\in \ld(w)$ since the other cases are analogous. By definition of candy elements we have that $u_0= s_0s_3\cdots s_{n-1}s_{n+2}$ and for $1\leq i \le m-1$,
\[u_iu_{i+1}=
\begin{cases}
    s_2s_4\cdots s_{n-2}s_ns_1s_3\cdots s_{n-1}s_{n+1}, \mbox{ for $i$ odd}; \\
    s_2s_4\cdots s_{n-2}s_ns_0s_3\cdots s_{n-1}s_{n+2}, \mbox{ for $i$ even}.
\end{cases}\]
Then $D_w=D_{u_0}\cdots D_{u_i u_{i+1}}\cdots D_{u_{m-1}u_m}$, and each concatenation gives rise to a loop $\bwO$, so $\# \bwO=m/2$; see Figure $\ref{concdiagcara}$, left. No other types of loops are formed through the concatenation. 
\end{proof}

\begin{Proposition}
\label{diagrammiasszigzag}
Let $w \in \zz(\D)$, then $\mathbf{a}(D_w)=1$ and any loop in $D_w$ is of the forms $\bO$ or $\wO$, moreover $f_\bullet(w)=\# \bO$ and $f_\circ(w)=\# \wO$.
\end{Proposition}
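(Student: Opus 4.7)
By the involution on the Coxeter graph exchanging $\{s_0,s_1\}$ with $\{s_{n+2},s_{n+1}\}$ (which swaps $A$ with $B$ and case (2) of Definition \ref{def:zigzag} with case (1)), it suffices to treat zigzags of the form $w = s_0 s_1 (AB)^k A^h$. My plan is to argue by induction on the length parameter $2k+h\ge 1$, proving alongside the proposition a more complete description of $D_w$: namely, that the unique non-propagating edge on one face sits at positions $\{1,2\}$ (or $\{1',2'\}$) and the unique non-propagating edge on the other face sits at positions $\{1,2\}$ (if $h=0$) or $\{n+1,n+2\}$ (if $h=1$), the propagating edges follow an explicit pattern, and the only loops present are exactly $k+1$ copies of $\bO$ together with $k+h$ copies of $\wO$.

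For the base case $k=0$, $h=1$, so $w=s_0 s_1 A = s_0 s_1 s_2 \cdots s_{n+2}$, I would compute $D_w$ by successively concatenating the simple diagrams. The initial block $D_0 D_1$ produces a diagram with non-propagating edges $\{1,2\}$ and $\{1',2'\}$ together with a single loop $\bO$, formed when the decorated edge of $D_0$ meets the undecorated edge of $D_1$; the surplus $\bullet$ decorations on non-propagating edges are removable via $r_3$ since all $\bullet$-decorations lie in a single $L$-strip (no $\circ$ is yet present). Each subsequent $D_j$ with $j=2,\ldots,n+1$ performs a one-step sweep: its south non-propagating edge $\{j',(j+1)'\}$ merges with the current north non-propagating edge $\{j-1,j\}$ and an adjacent propagating strand to produce a new non-propagating edge at $\{j,j+1\}$, creating no new loop. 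Finally $D_{n+2}$, whose edges $\{n+1,n+2\}_\circ$ and $\{(n+1)',(n+2)'\}_\circ$ carry $\circ$-decorations, glues to the current edge $\{n+1,n+2\}$ to close off a single new loop $\wO$. The resulting diagram has $\mathbf{a}(D_w)=1$, one $\bO$ and one $\wO$ loop, matching $f_\bullet(w)=f_\circ(w)=1$.

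For the inductive step I would distinguish two cases. If $h=1$, take $w'=s_0 s_1 (AB)^k$ (so $2k'+h'=2k<2k+1$); by hypothesis $D_{w'}$ has $k+1$ loops $\bO$ and $k$ loops $\wO$, with the non-propagating edge on the free face at the $\{1,2\}$-position, and $D_w = D_{w'}\cdot D_A$ then repeats the sweep of the base case, contributing exactly one additional $\wO$ loop via the final $D_{n+2}$. If $h=0$ (so $k\ge 1$), take $w'=s_0 s_1(AB)^{k-1}A$, whose diagram has its free-face non-propagating edge at the $\{n+1,n+2\}$-position; multiplying by $D_B$ applies the generators $D_n,D_{n-1},\ldots,D_1$ to sweep this edge back to $\{1,2\}$ without producing loops, and the final $D_0$ then glues its decorated edge to the interior $\{1,2\}$-edge to close off one new $\bO$ loop. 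In both cases the inductive description of $D_w$ is maintained and the loop counts increase by exactly the predicted amount; combining with the combinatorial identities $f_\bullet(w)=k+1$ (from the factor $s_0 s_1$ at the start of $w$ together with one occurrence at the tail of each $B$-block, after commuting $s_1 s_0 \to s_0 s_1$) and $f_\circ(w)=k+h$ (from the factor $s_{n+1}s_{n+2}$ at the end of each $A$-block) then gives the required equalities $f_\bullet(w)=\#\bO$ and $f_\circ(w)=\#\wO$.

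The main technical obstacle is the \emph{sweep lemma}: verifying that each interior step of the multiplication by $D_A$ or $D_B$ shifts the unique non-propagating edge by one position and creates no new loops. This is a path-tracing argument using the explicit shape of $D_{w'}$ supplied by the inductive hypothesis, together with the observation that each interior generator $D_j$ with $j\notin\{0,n+2\}$ is undecorated. New loops can therefore only be created at the two \emph{boundary} steps of a sweep (the multiplication by $D_{n+2}$ at the end of $D_A$ or by $D_0$ at the end of $D_B$), and at each such step exactly one loop of one kind is produced; in particular no loop of mixed type $\bwO$ can ever appear in $D_w$ for $w\in\zz(\D)$.
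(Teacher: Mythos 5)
Your proposal is correct and follows essentially the same route as the paper: the paper likewise decomposes $D_w$ as $D_0D_1(D_AD_B)^kD_A^h$ (or its mirror image) and observes by ``a direct computation'' that each concatenation by $D_A$ creates exactly one $\wO$, each concatenation by $D_B$ exactly one $\bO$, and no other loops arise, then matches these counts with $f_\bullet(w)=k+1$ and $f_\circ(w)=k+h$. Your induction on $2k+h$ and the ``sweep lemma'' merely make explicit the computation the paper leaves to the reader and to Figure~\ref{concdiagcara}, so there is nothing to object to.
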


\begin{proof}
By Definition \ref{def:zigzag}, either $D_w=D_0D_1(D_AD_B)^kD_A^h$ or $D_w=D_{n+1}D_{n+2}(D_BD_A)^kD_B^h$ with $k\ge 0$, $h\in\{0,1 \}$ and $k+h>0$. A direct computation shows that $\mathbf{a}(D_w)=1$ and that each concatenation by $D_A$ gives rise to a $\wO$ and each concatenation by $D_B$ to a $\bO$. Hence, in the first case, the number of occurrences of $\bO$ (respectively, $\wO$) in $D_w$ is $k+1$ (respectively, $k+h$) which is equal to $f_\bullet(w)$ (respectively, $f_\circ(w)$). A similar computation holds for the second case. No other types of loops are formed through these concatenations. An example is given in Figure \ref{concdiagcara}, right.

\end{proof}

\begin{figure}[ht]
    \centering
    \rotatebox{90}{\includegraphics[width=0.35\linewidth]{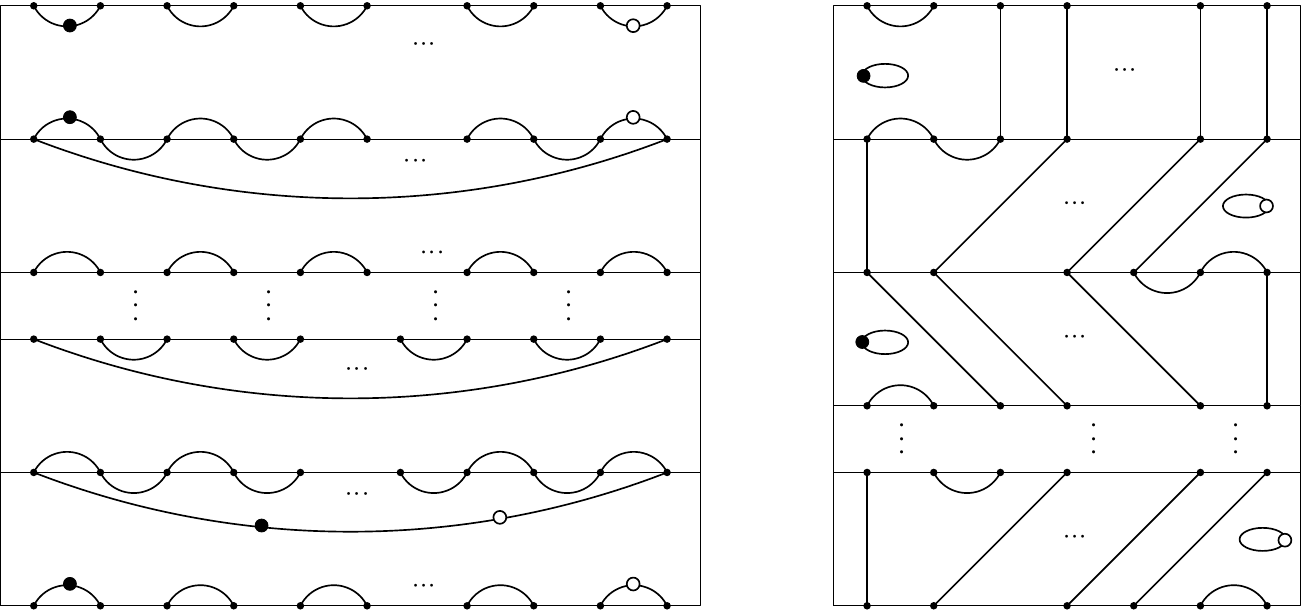}}
    \caption{On the left, the concatenation $D_w=D_{u_0}D_{u_1u_2}\cdots D_{u_{m-1}u_{m}}$ in the proof of Proposition \ref{diagrammiasscaramelle}; on the right, the concatenation $D_w=D_0D_1(D_AD_B)^kD_A$ in the proof of Proposition \ref{diagrammiasszigzag}.}
    \label{concdiagcara}
\end{figure}

\begin{Proposition}
\label{lemmadelta}
Let $w \in \fc(\D)$. Then 
\begin{itemize}
    \item[(a)] the diagram $D_w$ does not contain any undecorated loops;
    \item[(b)] $f_\bullet(w) = \# \bO$ in $D_w$;
    \item[(c)] $f_\circ(w) = \# \wO$ in $D_w$. 
\end{itemize}
\end{Proposition}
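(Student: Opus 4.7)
The plan is to argue by induction on $\ell(w)$, using star reductions to reduce the general case to the irreducible one. When $w = e$, the diagram $D_w = I_{n+2}$ has no loops and $f_\bullet(e) = f_\circ(e) = 0$, so (a)--(c) hold trivially. If $w$ is irreducible, by Theorem~\ref{decimo} it belongs to one of $\TTC(\D)$, $\car(\D)$, or $\zz(\D)$, and the required loop structure in $D_w$ is described explicitly by Propositions~\ref{diagrammiasscomm}, \ref{diagrammiasscaramelle}, and \ref{diagrammiasszigzag} respectively: the only loops are $\bO, \wO$ (for $\TTC$ and complete zigzags) or $\bwO$ (for candies), so no undecorated loops appear, and the loop counts match $f_\bullet(w)$ and $f_\circ(w)$. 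Note that, by definition, any candy $w \in \car(\D)$ satisfies $f_\bullet(w) = f_\circ(w) = 0$.

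If $w$ is reducible, choose a star reduction $w \red w'$; up to replacing $w$ by $w^{-1}$ we may assume that it is a left reduction, so that $w = st v'$ is a reduced product with $m_{s,t}=3$ and $w' = sw = t v'$. By the inductive hypothesis, (a)--(c) hold for $w'$, and by Remark~\ref{rem:occirr}(2) we have $f_\bullet(w) = f_\bullet(w')$ and $f_\circ(w) = f_\circ(w')$. Hence it suffices to show that the stacking $D_w = D_s \cdot D_{w'}$ preserves both the number and the decoration of all loops. Since $t \in \ld(w')$, Lemma~\ref{discesearchi} ensures that a non-propagating edge at positions $\{t, t+1\}$ (possibly decorated when $t \in \{0, n+2\}$) appears on the north face of $D_{w'}$, and in the concatenation this edge meets the south non-propagating edge $\{s', (s+1)'\}$ of $D_s$. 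As $s$ and $t$ are adjacent in the Coxeter graph of $\widetilde{D}_{n+2}$, an inspection of the possible pairs shows that the two intervals share exactly one middle position; hence the two edges merge into a single path, which is then extended by the remaining vertical propagating edges of $D_s$ into a new edge of $D_w$, without producing any closed curve. The loops already present in the interior of $D_{w'}$ persist unchanged in $D_w$, and none of the reductions $r_1$--$r_5$ can subsequently create a loop.

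The main obstacle is the diagrammatic verification that the merging at the middle interface never closes into a loop. This requires a case analysis according to whether $s$ and $t$ are both ``middle'' generators or one of them lies in the branch set $\{s_0, s_1, s_{n+1}, s_{n+2}\}$: in the latter situation, the corresponding simple edges coincide spatially (both $D_0$ and $D_1$ carry the edge $\{1, 2\}_\bullet$, and symmetrically $D_{n+1}, D_{n+2}$ on $\{n+1, n+2\}$), and the precise decorations carried across the interface must be tracked. Once this is settled, the decorated multisets of loops of $D_w$ and $D_{w'}$ coincide, and the three claims for $w$ follow from the inductive hypothesis together with the invariance of $f_\bullet$ and $f_\circ$ under star reduction.
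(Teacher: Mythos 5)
Your proof is correct and takes essentially the same route as the paper's: both establish (a)--(c) on the irreducible families via Propositions \ref{diagrammiasscomm}, \ref{diagrammiasscaramelle} and \ref{diagrammiasszigzag}, and then show that the loop multiset is unchanged under a star reduction because the simple edge being concatenated is adjacent to (shares exactly one node with) the north-face edge guaranteed by Lemma \ref{discesearchi}, so the two edges merge into a path rather than closing a loop. The only difference is cosmetic — you build $D_w=D_sD_{w'}$ upward by induction on length, while the paper computes $D_tD_w=D_{w_1}$ downward — and your deferred ``interface'' case analysis is carried out at the same level of detail as the paper's appeal to its Figure \ref{figloop}.
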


\begin{proof}
We show that the number of undecorated loops, $\bO$ and $\wO$ are invariant by left and right reductions. The result then follows since (a), (b), and (c) hold for irreducible elements by Propositions $\ref{diagrammiasscomm}$, $\ref{diagrammiasscaramelle}$ and $\ref{diagrammiasszigzag}$. 

Without loss of generality, assume that $w\red_t sw=:w_1$. Then \[  D_t D_w=D_tD_sD_{w_1}=D_tD_sD_tD_u=D_tD_u=D_{w_1} \] where $w_1=tu$ reduced. Since $s\in \ld(w)$, then the simple edge associated to $s$ by Proposition \ref{discesearchi} appears in $D_w$. Thus, the concatenation $D_t D_w$ does not create new loops, decorated or not, since the simple edge in $D_t$ is adjacent to the one corresponding to $s$ in $D_w$ (see Figure \ref{figloop}). Hence, the total number of loops does not change, and (a) is proved. Moreover, by Remark \ref{rem:occirr}(2), $f_\bullet(w)=f_\bullet (w_1)$ and $f_\circ(w)=f_\circ(w_1)$, so (b) and (c) also yield. 
\end{proof}

\begin{figure}[h!]
    \centering
    \includegraphics[width=0.35\linewidth]{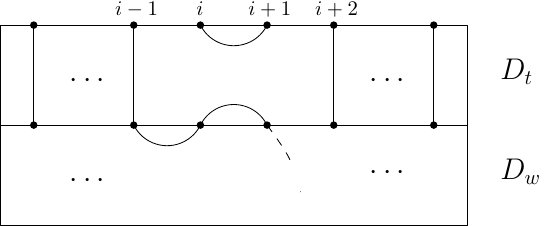}
    \caption{$D_tD_w=D_{w_1}$, for Proposition $\ref{lemmadelta}$.}
    \label{figloop}
\end{figure}

\section{Descents and monomial basis}\label{sec:descents}
In Definition \ref{def:tl-algebras}, we introduced $\tl(\D)$ generated by $\{b_0, b_1, \ldots, b_{n+2}\}$ and defining relations (d1)-(d3). Recall that $\{b_w\mid w\in \fc(\D)\}$ is a basis for $\tl(\D)$, called monomial basis.

\begin{Remark}
\label{remark:tlalgebra}
It is not difficult to see that by applying the relations (d1)-(d3), $b_{{i_1}}\cdots b_{{i_r}}=\delta^k b_x$ with $k\ge 0$ and $x\in \fc(\D)$. In particular, $s_{i_1}\in \ld(x)$ and $s_{i_r}\in \rd(x)$.
\end{Remark}

\begin{Lemma}
\label{nuovoprel}
Let $w\in \fc(\D)$ and suppose there exist $s\in S$ and $s'\in \ld(w)$ with $m_{s,s'}=3$. Then $b_sb_w=b_x$ with $x\in \fc(\D)$, $f_\bullet(w)=f_\bullet(x)$ and $f_\circ(w)=f_\circ(x)$.
\end{Lemma}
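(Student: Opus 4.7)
My strategy is to split into two cases depending on whether $sw$ is fully commutative. A preliminary observation I would first make is that $s \notin \ld(w)$: if both $s$ and $s'$ were in $\ld(w)$ with $m_{s,s'}=3$, then by the exchange/braid relation some reduced expression of $w$ would begin with the braid $ss's$, contradicting the characterization of fully commutative elements in Proposition \ref{caratterizzazionefc}. In particular $s\ne s'$ and $s\notin \ld(w)$.

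In the first case, where $sw\in\fc(\D)$, I have $\ell(sw)=\ell(w)+1$ and so $b_sb_w=b_{sw}$; I then take $x=sw$. The key point is that $sw$ is left star reducible to $w$ via the pair $(s,s')$, since $s\in\ld(sw)$, $s'\in\ld(s\cdot sw)=\ld(w)$, and $m_{s,s'}=3$. Hence $sw \red_{s'} w$, and Remark \ref{rem:occirr}(2) immediately gives $f_\bullet(x)=f_\bullet(sw)=f_\bullet(w)$ and similarly for $f_\circ$.

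In the second case, where $sw\notin\fc(\D)$, Remark \ref{oss}(5) tells me that $w$ admits $s's$ as a prefix, so I can write $w=s's\tilde{w}$ as a reduced product with $\tilde{w}\in\fc(\D)$ and $s\notin\ld(\tilde{w})$. Applying relation (d3) of Definition \ref{def:tl-algebras}, I compute
\[
b_sb_w = b_s b_{s'} b_s b_{\tilde{w}} = b_s b_{\tilde{w}} = b_{s\tilde{w}},
\]
the last equality since $s\tilde{w}$ is a suffix of $w$, hence fully commutative, and $s\notin\ld(\tilde{w})$. I then set $x=s\tilde{w}=s'w$. This time the reduction goes the other way: $w$ is left star reducible to $x=s'w$ via the pair $(s',s)$, since $s'\in\ld(w)$, $s\in\ld(s'w)$, and $m_{s,s'}=3$. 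Applying Remark \ref{rem:occirr}(2) to the reduction $w\red_s s'w$ concludes the proof.

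The heart of the argument reduces in both cases to producing a star reduction between $w$ and $x$, after which the invariance of $f_\bullet$ and $f_\circ$ under star reductions yields the numerical conclusion for free. I do not anticipate any serious obstacle; the delicate but short preliminary step is ruling out $s\in\ld(w)$, which rests squarely on the full commutativity hypothesis on $w$.
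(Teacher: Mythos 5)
Your proposal is correct and follows essentially the same route as the paper: split on whether $sw\in\fc(\D)$, use relation (d3) on the prefix $s's$ in the non-FC case, and conclude via the invariance of $f_\bullet,f_\circ$ under star reductions (Remark \ref{rem:occirr}). You merely spell out two points the paper leaves implicit, namely why $s\notin\ld(w)$ and which star reduction links $w$ to $x$ in each case.
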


\begin{proof}
First of all observe that $\ell(sw)= \ell(w)+1$. If $sw\in \fc(\D)$ then $b_sb_w=b_{sw}$ and it follows that $f_\bullet(w)=f_\bullet(sw)$ and $f_\circ(w)=f_\circ(sw)$. If $sw\notin \fc(\D)$, then $w$ can be written as $w=s'su$ reduced with $m_{s',s}=3$ by Remark \ref{oss}. Hence, $b_sb_w=b_{s}b_u=b_{su}$ with $f_\bullet(w)=f_\bullet(su)$ and $f_\circ(w)=f_\circ(su)$ by Remark $\ref{rem:occirr}$.
\end{proof}

\begin{Proposition}
\label{nuovodue}
Let $w\in \fc(\D)$ be reducible to $v\in \car(\D)\cup \TTC(\D)$, and let $s\notin \ld(w)$ be such that $b_sb_w=\delta ^kb_{x}$ with $x\in \fc(\D)$. 
\begin{itemize}
    \item[(a)] If $k>0$, then $v\in \car(\D)$ and $f_\bullet(x)=f_\circ(x)=1$.
    \item[(b)] If $v\in \TTC(\D)$, then $k=0$.
\end{itemize}
\end{Proposition}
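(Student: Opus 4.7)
The plan is to prove (a) first; then (b) follows by contraposition, since $\car(\D)$ and $\TTC(\D)$ are disjoint by Theorem \ref{decimo}.

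Assume $k>0$. Two preliminary reductions narrow down $s$. First, $s$ must commute with every element of $\ld(w)$: otherwise Lemma \ref{nuovoprel} would give $b_sb_w=b_x$ directly with $k=0$. Second, $s\in\text{supp}(w)$: if $s\notin\text{supp}(w)$, then $b_sb_w$ contains exactly one copy of $b_s$, and neither (d2) nor (d3) can create adjacent $b_sb_s$; moreover, no $b_i^2$-configuration with $i\neq s$ can emerge from applications of (d2), (d3), since $b_w$ is a reduced FC expression and no (d3) factor can be formed from the single prepended $b_s$. So relation (d1), the sole source of $\delta$, never triggers. With $w=u_0\cdots u_m$ in CFNF, CFNF property (d) combined with the commutation condition then forces the first occurrence of $s$ in the normal form to lie in a block $u_j$ with $j\ge 2$: if $s\in\text{supp}(u_1)$, property (d) would demand a non-commuting predecessor in $u_0=\ld(w)$, contradicting the first reduction.

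The main step is to conclude $v\in\car(\D)$. By the $\bullet/\circ$ symmetry of the Coxeter graph of $\D$, it suffices to assume $s\in\{\s,\sn\}$; the case $s\in\{s_2,\ldots,s_n\}$ fails the commutation condition on $\ld(w)$. Taking $s=\s=s_1$, property (d) applied at the block $u_j$ forces $s_2\in\text{supp}(u_{j-1})$, and iterating (d) back towards $u_0$ propagates a chain of odd-index generators and elements of $\{\sn,\tn\}$ into the intermediate blocks. Invariance of $f_\bullet$ and $f_\circ$ under star reduction (Remark \ref{rem:occirr}(2)), together with the bounds $f_\bullet(v),f_\circ(v)\le 1$ when $v\in\TTC(\D)$ (Proposition \ref{diagrammiasscomm}(c)), rule out $v\in\TTC(\D)$: the block pattern just derived prevents $w$ from reducing to a product of commuting generators without forcing an $f$-invariant to jump. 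Hence $v\in\car(\D)$.

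Finally, $f_\bullet(x)=f_\circ(x)=1$ is obtained by tracking the explicit algebraic reduction of $b_sb_w$. Repeated (d3) collapses of patterns $b_s b_{s_2} b_s$ in the middle of the product, together with commutations through the odd-index generators, eventually bring two copies of some $b_i$ with $i\in\{\sn,\tn\}$ (coming from distinct $u_{2i}$-blocks of the underlying candy substructure) into adjacency. Relation (d1) then produces the $\delta^k$ factor, while the resulting monomial $b_x$ acquires both the $\s\ti$-factor and the $\sn\tn$-factor, giving $f_\bullet(x)=1=f_\circ(x)$ by Proposition \ref{lemmadelta}(b)--(c).

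The hard part: the principal obstacle is the structural step ruling out $v\in\TTC(\D)$, which requires detailed CFNF bookkeeping of the joint occurrences of $\s,\ti,\sn,\tn$ and the even-index generators together with the reducibility classification from Theorem \ref{decimo} and Corollary \ref{corollary:corclassification}. A secondary technical hurdle is carrying out the final reduction of $b_sb_w$ entirely within $\tl(\D)$ via (d1)--(d3), without appealing to the diagrammatic representation whose faithfulness is the very theorem these propositions are assembled to prove.
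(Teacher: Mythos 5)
Your overall skeleton is sound in places — deducing (b) from (a), using Lemma \ref{nuovoprel} to force $s$ to commute with all of $\ld(w)$, and the observation that for a candy $w$ itself the only viable choices are $s\in\{\ti,\tn\}$ — but the core of the argument is missing, and you say as much yourself in your closing paragraph. Two steps are genuine gaps. First, your reduction ``it suffices to assume $s\in\{\s,\sn\}$; the case $s\in\{s_2,\ldots,s_n\}$ fails the commutation condition on $\ld(w)$'' is only justified when $w$ is itself a candy, where $\ld(w)$ contains a neighbour of every $s_j$ with $2\le j\le n$. For a general $w$ that is merely \emph{reducible} to a candy, $\ld(w)$ can be small and need not contain any neighbour of $s_j$, so nothing rules out $s=s_j$ at this stage. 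Second, the step that actually decides the proposition — that $k>0$ forces $v\in\car(\D)$ rather than $v\in\TTC(\D)$ — is asserted, not proved. The appeal to ``an $f$-invariant jumping'' is vacuous: $f_\bullet$ and $f_\circ$ are constant along any reduction sequence by Remark \ref{rem:occirr}(2), so they cannot jump, and $f_\bullet(v)\le 1$ is perfectly compatible with $v\in\TTC(\D)$. Likewise the final claim $f_\bullet(x)=f_\circ(x)=1$ is described (``two copies of some $b_i$ brought into adjacency'') but never established for general $w$.

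The paper closes exactly these gaps by inducting on the length $l$ of a reduction sequence $w=w_0\red\cdots\red w_l=v$, rather than by analysing the CFNF of $w$ directly. The base case $l=0$ is your candy computation (where $sw\in\fc(\D)$ disposes of $v=w\in\TTC(\D)$, and the explicit product $b_sb_w=\delta^{\lfloor m/2\rfloor-1}b_x$ with $x=s_0s_1s_3\cdots s_{n-1}s_{n+1}s_{n+2}$ gives the $f$-values). The inductive step commutes the multiplication by $b_s$ past one reduction: writing $w_1=rw$ or $w_1=wr$, one uses Remark \ref{remark:tlalgebra} and Lemma \ref{nuovoprel} to show $b_sb_w=\delta^h b_{x'}$ with the same $h=k$ and $x=x'$ as for $b_sb_{w_1}$, except in the delicate subcase $m_{s,p}=3$, where one must pass to a second reduction $w_1\red_s pw_1$ and invoke Theorem \ref{theorem:starope} and Corollary \ref{corollary:corclassification} to keep control of the target irreducible element. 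If you want to salvage your approach, you would need to replace the ``block pattern'' paragraph with an actual induction of this kind (or an equivalently rigorous CFNF analysis); as written, the proposal proves the base case and defers the rest.
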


\begin{proof}

Note that (b) follows by (a). 
To prove (a) we consider a reduction sequence $w_0=w\red w_1\red w_2\red \cdots \red w_l=v$ with $v=u_0\cdots u_m$ its CFNF.
We prove the result by induction on $l$.

If $l=0$, then $w=v\in \car(\D)\cup \TTC(\D)$. If $w\in \TTC(\D)$, then $sw\in \fc(\D)$ and $b_sb_{w}=b_{sw}$. So, we have $w\in \car(\D)$ and we assume $\{\s , \sn\}\subset \ld(w)$. If $s=s_j$ with $2\le j\le n$ even, then there exists $s'\in \ld(w)$ such that $m_{s,s'}=3$, so $b_sb_w=b_x$ by Lemma $\ref{nuovoprel}$. Thus, $s\in \{\ti, \tn\}$, then $b_sb_w=\delta ^kb_x$ with $k=\lfloor m/2\rfloor -1$ and $x=s_0s_1s_3s_5\cdots s_{n-1}s_{n+1}s_{n+2}\in \TTC(\D)$ by direct computation and $f_\bullet(x)=f_\circ(x)=1$. 

Assume $l>0$. First, we assume that $w\red_p rw=:w_1$. We have that $m_{s,r}=2$, otherwise if $m_{s,r}=3$, then $b_sb_w=b_y$ with $y\in \fc(\D)$ by Lemma \ref{nuovoprel}. Therefore, 
\[\delta ^kb_{x}=b_sb_w=b_sb_{r}b_{w_1}=b_{r}b_{s}b_{w_1}\] with $k>0$ and $s\notin \ld(w_1)$.
Now we distinguish two cases. Suppose $m_{s,p}=2$. By Remark \ref{remark:tlalgebra} we have that $b_sb_{w_1}=\delta ^hb_{y}$ with $h\ge 0$, $y\in \fc(\D)$ and $p\in \ld(y)$, so \[b_rb_{s}b_{w_1}=\delta ^hb_{r}b_{y}.\]
Thus by Lemma \ref{nuovoprel}, $b_rb_y=b_{x'}$ with $x'\in \fc(\D)$ and $f_\bullet(x')=f_\bullet(y)$ and $f_\circ(x')=f_\circ(y)$. Therefore, $\delta ^kb_{x}=\delta ^hb_{x'}$, hence $h=k>0$ and $x=x'$. By inductive hypothesis on $w_1$, we have that $f_\bullet(y)=f_\circ(y)=1$ and $v\in \car(\D)$, in particular \[f_\bullet(x)=f_\bullet(y)=1 \mbox{ and }f_\circ(x)=f_\circ(y)=1.\]

On the other hand, if $m_{s,p}=3$, we have that $sw_1\notin \fc(\D)$, otherwise if $sw_1\in \fc(\D)$, then $rsw_1=sw\in \fc(\D)$ and $b_sb_w=b_{sw}$. 
Thus, by Remark \ref{oss}(5), it means that $w_1\red_s pw_1:=w'$. Therefore, $w$ is star reducible to $w'$, in particular by Theorem \ref{theorem:starope}(2) there exists a sequence of length $l$ such that\[ w\red w_1'=w_1\red w_2'=w'\red \cdots \red w'_l=v'\in \car(\D)\cup \TTC(\D)\] 
by Corollary \ref{corollary:corclassification}. Thus, \[\delta ^kb_x=b_{r}b_{s}b_{w_1}=b_{r}b_{s}b_{p}b_{w'}=b_{r}b_{w'}\] with $k>0$, since $b_{s}b_{p}b_{w'}=b_{w'}$ and $s\in \ld(w')$. Moreover, since $w=rpw'$ reduced then $r\notin \ld(w')$. Therefore, by inductive hypothesis $f_\bullet(x)=f_\circ(x)=1$ and $v'\in \car(\D)$, so we have that $v=v'$ by Corollary \ref{corollary:corclassification}.

 Now, we suppose that $w\red_p wr:=w_1$. Thus, \[\delta ^kb_x=b_sb_w=b_sb_{w_1}b_{r}\] with $k>0$. By Remark \ref{remark:tlalgebra}, $b_sb_{w_1}= \delta ^hb_y$ and since $p\in \rd(w_1)$, we have that $p\in \rd(y)$. Thus, $b_yb_r=b_{x'}$ with $x'\in \fc(\D)$ and $f_\bullet(x')=f_\bullet(y)$ and $f_\circ(x')=f_\circ(y)$ by Lemma \ref{nuovoprel}. Therefore, $\delta ^k b_x=\delta^h b_{x'}$, so $k=h>0$ and $x=x'$. By inductive hypothesis on $w_1$, we have that $f_\bullet(y)=f_\circ(y)=1$ and $v\in \car(\D)$, in particular \[f_\bullet(x)=f_\bullet(y)=1 \mbox{ and }f_\circ(x)=f_\circ(y)=1.\]

\end{proof}

\begin{Remark}
\label{remark:a-value}
    Note that the $\mathbf{a}$-value is invariant for star reducibility. In fact, assume that $w\red_{t}sw=:w'$. Since $m_{s,t}=3$, the non-propagating edge on the north face associated to $t$ in $D_{w'}$ has a common node with the simple edge associated to $s$. Thus, when concatenating $D_s$ and $D_{w'}$, the number of non-propagating edges does not change. Therefore, since $D_w=D_sD_{w'}=D_{sw'}$, we have $\ab(D_w)=\ab(D_{w'})$.
\end{Remark}

\begin{Proposition}
\label{nuovotre}
Let $w\in \fc(\D)$ to be reducible to $v\in \zz(\D)$, then $\mathbf{a}(D_w)=1$. Moreover, if $s\notin \ld(w)$ and $b_sb_w=\delta ^kb_x$ with $x\in \fc(\D)$ and $k>0$, then $\mathbf{a}(D_x)\ge 2$. 
\end{Proposition}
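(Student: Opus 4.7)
The first assertion is immediate: by Remark \ref{remark:a-value} the statistic $\ab$ is invariant under star reductions, and by Proposition \ref{diagrammiasszigzag} any $v \in \zz(\D)$ satisfies $\ab(D_v) = 1$; hence $\ab(D_w) = \ab(D_v) = 1$.

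For the second assertion I would argue diagrammatically, by proving the contrapositive: if $\ab(D_x) = 1$ then $k = 0$. Since $\ab(D_w) = 1$, the diagram $D_w$ has a unique non-propagating edge $e$ on its north face, and by Proposition \ref{lemmadelta} every loop of $D_w$ is of type $\bO$ or $\wO$ (with counts $f_\bullet(w)$ and $f_\circ(w)$, and in particular no undecorated loop is present in $D_w$). Writing $s = s_j$, the simple edges of $D_s$ sit at positions $\{j, j+1\}$. I would split the analysis of $D_s D_w$ according to whether the bottom simple edge of $D_s$ coincides with $e$ or not. If they do not coincide, the bottom simple edge of $D_s$ bridges two propagating edges of $D_w$ and produces a new non-propagating edge on the south face of $D_x$, while the top simple edge of $D_s$ and the image of $e$ carried through the vertical edges of $D_s$ yield two distinct non-propagating edges on the north face; hence $\ab(D_x) \ge 2$, contradicting the assumption (the subsequent $r_3, r_4, r_5$ reductions that produce the factor $\delta^k$ act only on loop decorations and preserve this inequality). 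If they do coincide, a single new loop is formed from $e$ and the bottom simple edge of $D_s$. For this loop to be undecorated (the only way to produce a $\delta$ directly via $r_5$), both $e$ and that simple edge must be undecorated, forcing $2 \le j \le n$ and $e = \{j, j+1\}$; a direct reading of the CFNF of $w$ together with Lemma \ref{discesearchi} and the fact that $w$ is a weak zigzag (Corollary \ref{corollary:precompzigzag}) then shows that this configuration forces $s_j \in \ld(w)$, contradicting $s \notin \ld(w)$. If instead the new loop is decorated, no $\delta$ arises directly, and the strip restriction in $r_3, r_4$ (which is in force precisely when $\ab(D_x) = 1$) prevents the new loop from merging with the preexisting $\bO, \wO$ loops of $D_w$ to produce a $\delta$ factor. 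Therefore $k = 0$, contradicting $k > 0$.

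The principal technical obstacle is the matching-case analysis, where one must establish a converse-type strengthening of Lemma \ref{discesearchi}(a): in the restricted setting of weak zigzags, an undecorated top non-propagating edge at $\{j, j+1\}$ with $2 \le j \le n$ forces $s_j \in \ld(w)$. This amounts to a local reading of the first block of the CFNF of $w$, exploiting the weak zigzag constraint together with the fact that $\ab(D_w) = 1$ allows only a very small class of left descent sets.
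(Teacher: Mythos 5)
Your proof of the first assertion coincides with the paper's (Remark \ref{remark:a-value} plus Proposition \ref{diagrammiasszigzag}). For the second assertion, however, the paper does not argue on the concatenation $D_sD_w$ at all: it first notes that $m_{s,t}=2$ for every $t\in\ld(w)$ (otherwise Lemma \ref{nuovoprel} forces $k=0$), deduces from Remark \ref{remark:tlalgebra} that $\ld(x)\supseteq\ld(w)\sqcup\{s\}$, rules out $\ld(x)\in\{\{s_0,s_1\},\{s_{n+1},s_{n+2}\}\}$ using the weak-zigzag structure of $w$ (Corollary \ref{corollary:precompzigzag}), and then gets two distinct non-propagating north edges in $D_x$ directly from the already-proved direction of Lemma \ref{discesearchi}. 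Your diagrammatic route is genuinely different, and it has concrete gaps.

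First, your dichotomy is incomplete: ``the bottom simple edge of $D_s$ does not coincide with $e$'' includes the case where it shares exactly one node with $e$ (i.e.\ $s$ is adjacent in the Coxeter graph to a left descent of $w$). There the simple edge does \emph{not} bridge two propagating edges; $e$ gets absorbed into a propagating edge, no new south non-propagating edge appears, and $\ab(D_sD_w)$ stays equal to $1$, so no contradiction with your assumption arises. One must instead invoke Lemma \ref{nuovoprel} to get $k=0$ in that subcase, which you never do. Second, and more seriously, the pivot of your matching case --- that an undecorated north edge $\{j,j+1\}$ in $D_w$ forces $s_j\in\ld(w)$ --- is precisely the converse of Lemma \ref{discesearchi}, i.e.\ the content of Theorem \ref{maindis} and Corollary \ref{corollary:archidiscese}, which the paper proves \emph{later} using this very proposition; you acknowledge it as ``the principal technical obstacle'' but only assert that it follows from ``a local reading of the CFNF,'' so as written the argument is either circular or resting on an unproved claim. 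Third, the assertion that the strip restriction on $r_3,r_4$ prevents the new decorated loop from interacting with the preexisting $\bO,\wO$ loops to produce an undecorated loop (hence a $\delta$ via $r_5$) is stated without justification; it requires an analysis of which strips the loops of $D_w$ occupy for $w$ a weak zigzag. Each gap is plausibly repairable, but together they leave the second assertion unproved; the paper's algebraic argument avoids all three issues.
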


\begin{proof}
Since $\ab(D_v)=1$ by Proposition \ref{diagrammiasszigzag}, we have that $\ab(D_w)=1$ by Remark \ref{remark:a-value}. 

To conclude the proof, let $s\notin \ld(w)$, $b_sb_w=\delta^k b_x$ with $k>0$, and $x\in \fc(\D)$. We have that $m_{s,t}=2$ for all $t\in \ld(w)$, otherwise if there exists $s'\in \ld(w)$ such that $m_{s,s'}=3$, then $b_sb_{w}=b_y$ with $y\in \fc(\D)$ by Lemma \ref{nuovoprel}. We observe that $\ld(x)\supseteq \ld(w)\sqcup \{s\}$ by Remark \ref{remark:tlalgebra}, so for this reason $\left | \ld(x)\right |\ge 2$ and moreover $\ld(x)\notin \{\{s_0,s_1\},\{s_{n+1},s_{n+2}\}\}$. In fact, for instance, if $\ld(x)=\{s_0,s_1\}$, then $s=\s$ and $w$ is a weak zigzag with $\ld(w)=\{\ti\}$, by Corollary \ref{corollary:precompzigzag}. But this means that $sw\in \fc(\D)$ and $b_sb_w=b_{sw}$. 
Therefore, by Lemma $\ref{discesearchi}$ we have that $\mathbf{a}(D_x)\ge 2$.

\end{proof}

\begin{Theorem}
\label{maindis}
Let $w\in \fc(\D)$, $s\in \ld(w)$ if and only if $D_sD_w=\delta D_w$. Therefore, if $D_w=D_u$, then $\ld(w)=\ld(u)$ and $\rd(w)=\rd(u)$. 
\end{Theorem}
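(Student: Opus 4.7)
The plan is to verify the biconditional by proving the contrapositive of the nontrivial direction, since the forward implication $s\in\ld(w)\Rightarrow D_sD_w=\delta D_w$ is already Lemma~\ref{discesadelta}. Assume therefore that $s\notin\ld(w)$. The simple diagrams satisfy relations (d1)--(d3) of Definition~\ref{def:tl-algebras}, so the reduction scheme of Remark~\ref{remark:tlalgebra} applies verbatim at the diagram level, yielding $D_sD_w=\delta^k D_x$ for some $x\in\fc(\D)$ and some $k\ge 0$. By linear independence of the basis of $\PLRn$, the equation $D_sD_w=\delta D_w$ would then force $k=1$ and $D_x=D_w$; for any other value of $k$ the powers of $\delta$ on the two sides cannot match.

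The next step is to rule out this single remaining configuration by splitting according to the family of any irreducible $v$ to which $w$ star-reduces, as classified by Theorem~\ref{decimo}. If $v\in\TTC(\D)$, Proposition~\ref{nuovodue}(b) forces $k=0$, a contradiction. If $v\in\car(\D)$, Proposition~\ref{nuovodue}(a) with $k>0$ yields $f_\bullet(x)=1$; on the other hand, Proposition~\ref{diagrammiasscaramelle} tells us that $D_v$ carries only $\bwO$-loops, so $\#\bO$ in $D_v$ is zero, and Proposition~\ref{lemmadelta}(b) combined with Remark~\ref{rem:occirr}(2) gives $f_\bullet(w)=f_\bullet(v)=0$; hence $D_x=D_w$ forces $f_\bullet(x)=0$, contradicting $f_\bullet(x)=1$. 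If $v\in\zz(\D)$, Proposition~\ref{nuovotre} yields $\ab(D_w)=1$ together with $\ab(D_x)\ge 2$ whenever $k>0$, again incompatible with $D_x=D_w$.

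For the corollary, once the biconditional is in hand, $D_w=D_u$ gives for every $s\in S$ the chain $s\in\ld(w)\iff D_sD_w=\delta D_w\iff D_sD_u=\delta D_u\iff s\in\ld(u)$, whence $\ld(w)=\ld(u)$. The analogous statement $\rd(w)=\rd(u)$ follows by reading the entire proof in mirror image: the right-handed versions of Propositions~\ref{nuovodue} and \ref{nuovotre}, obtained by swapping left/right descents, prefixes/suffixes and the roles of $D_sD_w$ and $D_wD_s$ throughout their proofs, yield the symmetric criterion $s\in\rd(w)\iff D_wD_s=\delta D_w$ and hence the conclusion.

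The main obstacle is not in the theorem itself but in the preparatory Propositions~\ref{nuovodue} and \ref{nuovotre}, which already absorb the classification of irreducible fully commutative elements of type $\D$. Once they are available, the present statement reduces to a short bookkeeping argument involving three invariants that are respected by star reduction and whose values are recorded by the diagram: the power of $\delta$ in the reduction $D_sD_w=\delta^k D_x$, the number of decorated loops $\#\bO$ (equal to $f_\bullet$), and the number $\ab$ of non-propagating edges on the north face.
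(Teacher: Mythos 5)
Your proposal is correct and follows essentially the same route as the paper's proof: reduce $D_sD_w$ to $\delta^k D_x$ via the monomial-basis computation, use Proposition~\ref{lemmadelta} and linear independence to force $k=1$ and $D_x=D_w$, and then derive a contradiction in each of the three families of Theorem~\ref{decimo} using Propositions~\ref{nuovodue} and~\ref{nuovotre} exactly as the paper does. The only (cosmetic) divergence is in the right-descent statement, where the paper applies the left-descent result to $w^{-1}$ and $u^{-1}$ via $D_{w^{-1}}=D_{u^{-1}}$ rather than re-running mirror-image versions of the supporting propositions; the inverse trick is slightly more economical since those right-handed versions are never stated.
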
 

\begin{proof}
Assume $s\in \ld(w)$, then $D_sD_w=\delta D_w$ by Lemma \ref{discesadelta}. 

Suppose now, $D_sD_w=\delta D_w$ and $s\notin \ld(w)$. We observe that by Lemma \ref{nuovodue} and definition of $\widetilde{\theta}_D$, $w$ is reducible to $v\in \car(\D)\cup \zz(\D)$. By Remark \ref{remark:tlalgebra}, we have that $b_sb_w=\delta ^kb_x$ with $k\ge 0$ and $x\in \fc(\D)$. Hence, applying the map $\widetilde{\theta}_D$ we have that $\delta ^kD_x=\delta D_w$. Moreover, by Proposition \ref{lemmadelta}, it means that $k=1$ and $D_x=D_w$. Now, if $v\in \car(\D)$, then $0=f_\bullet(w)\ne f_\bullet(x)=1$ by Remark \ref{rem:occirr} and Proposition \ref{nuovodue}, but they should be the same by Proposition \ref{lemmadelta}. On the other hand, if $v\in \zz(\D)$, by Lemma \ref{nuovotre} we have that $2\le \mathbf{a}(D_x)\ne\mathbf{a}(D_w)=1$, which is absurd. Thus, $s\in \ld(w)$. 

In conclusion, if $D_w=D_u$, then for all $s\in S$, $D_sD_w=\delta D_w$ if and only if $D_sD_u=\delta D_u$, hence $\ld(w)=\ld(u)$. Moreover, since $D_{w^{-1}}=D_{u^{-1}}$, we have that $\ld(w^{-1})=\ld(u^{-1})$, so $\rd(w)=\rd(u)$. 

\end{proof} 

\begin{Corollary}
\label{corollary:archidiscese}
    Let $w\in \fc(\D)$. 
    \begin{itemize}
        \item[(a)] For $1\le i\le n+1$, the edge $\{i,i+1\}$ appears in $D_w$ if and only if $s_i\in \ld(w)$.
        \item[(b)] The edge $\{1,2\}_{\bullet}$ appears in $D_w$ or $\{1,2\}$ appears in $D_w$ and the numbers of $\bO$ in $D_w$ and $D_{0}D_{w}$ are the same, if and only if $s_0\in \ld(w)$. 
        \item[(c)] The edge $\{n+1,n+2\}_{\circ}$ appears in $D_w$ or $\{n+1,n+2\}$ appears in $D_w$ and the numbers of $\wO$ in $D_w$ and $D_{n+2}D_{w}$ are the same, if and only if $s_{n+2}\in \ld(w)$. 
    \end{itemize}
\end{Corollary}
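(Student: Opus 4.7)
The plan is to combine Theorem~\ref{maindis} with Lemma~\ref{discesearchi}. The ``only if'' directions in (a), (b), (c) (that $s\in \ld(w)$ forces the prescribed edge pattern in $D_w$) are exactly the content of Lemma~\ref{discesearchi}, so only the reverse implications remain. For each of them my intention is to verify the stronger algebraic identity $D_sD_w=\delta D_w$ for $s=s_i$, $s_0$, and $s_{n+2}$ respectively; Theorem~\ref{maindis} then delivers $s\in\ld(w)$ at once.

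For (a), I would suppose $\{i,i+1\}$ appears in $D_w$ with $1\le i\le n+1$ and analyze the concatenation $D_{s_i}D_w$: the south edge $\{i',(i+1)'\}$ of $D_{s_i}$ joins the north edge $\{i,i+1\}$ of $D_w$ into a single undecorated closed loop, while the vertical edges of $D_{s_i}$ leave every other part of $D_w$ unchanged and the north edge $\{i,i+1\}$ of $D_{s_i}$ becomes the new top edge between the nodes $1$ and $2$. A single application of $r_5$ to the fresh loop yields $\delta D_w$, as required.

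Part (b) splits into two sub-cases. If $\{1,2\}_{\bullet}$ lies in $D_w$, concatenating $D_0$ produces a loop carrying two adjacent $\bullet$-decorations; an $r_1$ cancels them and an $r_5$ removes the resulting undecorated loop, giving $\delta D_w$. If instead $\{1,2\}$ appears undecorated in $D_w$ and the $\bO$-count of $D_0D_w$ equals that of $D_w$, the concatenation first creates one new $\bO$, and the count hypothesis forces $D_w$ to contain at least one preexisting $\bO$ (otherwise no reduction sequence could lower the $\bO$-count). Applying $r_3$, the new $\bO$ strips the $\bullet$ from a preexisting $\bO$, after which $r_5$ collapses the resulting undecorated loop into the factor $\delta$; the $\bullet$-decoration on the new top edge $\{1,2\}_{\bullet}$ introduced by $D_0$ is in turn stripped by a further $r_3$ using the surviving $\bO$, restoring the undecorated top edge $\{1,2\}$ already present in $D_w$. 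Part (c) follows by the symmetric argument interchanging $\bullet \leftrightarrow \circ$, $s_0\leftrightarrow s_{n+2}$, and the two ends of the box.

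The main delicate point I foresee is the second sub-case of (b) (and its mirror in (c)): one must argue that these reductions genuinely return $\delta D_w$ and not some other scalar multiple of a different basis diagram. This rests on the observation that $D_0$ perturbs the diagram only at the first two columns, so the propagating structure and all other decorations of $D_w$ pass through unchanged; combined with the uniqueness of the normal form in $\PLRn$, the net effect of the required reductions is exactly one factor of $\delta$ and a restored copy of $D_w$.
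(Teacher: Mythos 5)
Your proposal is correct and follows essentially the same route as the paper: the forward implications are quoted from Lemma~\ref{discesearchi}, and the converses are obtained by verifying $D_sD_w=\delta D_w$ diagrammatically (including the observation that equal $\bO$-counts force a preexisting $\bO$ enabling the $r_3$--$r_5$ reduction) and then invoking Theorem~\ref{maindis}. The only difference is that you spell out the concatenation and reduction steps in more detail than the paper's terser argument; no gap.
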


\begin{proof}
The sufficient condition is proved in Lemma \ref{discesearchi}. Here we settle the other implication.

First, note that if $\{i,i+1\}$ appears in $D_w$ with $1\le i\le n+1$, then $D_iD_w=\delta D_w$, so $s_i\in \ld(w)$ by Theorem \ref{maindis}. Hence, (a) is proved. 

If $\{1,2\}_{\bullet}$ is present in $D_w$, then $D_0D_w=\delta D_w$, thus $s_0\in \ld(w)$ by Theorem \ref{maindis}. Suppose now that $\{1,2\}$ appears in $D_w$. Since the concatenation $D_0$ with $D_w$ forms a loop $\bO$ and the numbers of $\bO$ in $D_w$ and $D_{0}D_{w}$ are the same, it means that a $\bO$ giving rise to a reduction of type (r3) was already present in $D_w$. Hence $D_0D_w=\delta D_w$ and (b) follows by Theorem \ref{maindis}. Point (c) is analogous to (b).
\end{proof}

\section{Injectivity of $\widetilde{\theta}_D$}\label{sec:injectivity}
In this section we prove that $\widetilde{\theta}_D$ is injective. The argument proceeds in three steps:
we first show that $\widetilde{\theta}_D$ is injective on the set of irreducible elements of 
$\fc(\D).$
We then prove that if $\widetilde{\theta}_D(b_u)=\widetilde{\theta}_D(b_w)$ and one of $u$ and $w$ is irreducible, then $u=w$.
Finally, combining the previous steps we deduce the injectivity of $\widetilde{\theta}_D$.

\begin{Proposition}
\label{inietirr}
Let $u,w \in \I(\D)$. If $D_u=D_w$, then $u=w$.
\end{Proposition}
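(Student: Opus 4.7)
The plan is to combine the trichotomy $\I(\D) = \car(\D) \sqcup \zz(\D) \sqcup \TTC(\D)$ from Theorem~\ref{decimo} with the per-family diagrammatic descriptions supplied by Propositions~\ref{diagrammiasscomm}, \ref{diagrammiasscaramelle}, \ref{diagrammiasszigzag}, and \ref{lemmadelta}. From the single hypothesis $D_u = D_w$, I can extract the following invariants: the descent sets $\ld$ and $\rd$ via Theorem~\ref{maindis}; the loop counts $f_\bullet = \#\bO$ and $f_\circ = \#\wO$ via Proposition~\ref{lemmadelta}; and the number of $\bwO$ loops by direct inspection. The argument naturally splits in two stages: first conclude that $u$ and $w$ lie in the same family, then within each family recover the element from these invariants.

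For the first stage, the presence of at least one $\bwO$ loop characterizes $\car(\D)$: no zigzag or TTC diagram contains such a loop by Propositions~\ref{diagrammiasscomm} and \ref{diagrammiasszigzag}, whereas every candy has $m/2 \ge 1$ of them by Proposition~\ref{diagrammiasscaramelle}. Hence $u \in \car(\D)$ if and only if $w \in \car(\D)$. To exclude the mixed case $u \in \zz(\D)$ and $w \in \TTC(\D)$ (the opposite mixed case is symmetric), I would combine two facts. A completely commutative element satisfies $\ld(w) = \rd(w) = \mathrm{supp}(w)$ and $f_\bullet(w) + f_\circ(w) \le 2$. A zigzag $u = s_0s_1(AB)^kA^h$ (or its symmetric form) has $\ld(u) = \rd(u)$ exactly when $h = 0$, and in all cases $f_\bullet(u) + f_\circ(u) = 2k + h + 1 \ge 2$. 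If $h = 1$, then $\ld(u) \neq \rd(u)$ forces $\ld(w) \neq \rd(w)$, contradicting the TTC identity $\ld(w) = \rd(w)$; if $h = 0$, then $k \ge 1$ and so $f_\bullet + f_\circ \ge 3$, exceeding what any TTC diagram can support.

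For the second stage, I treat each family separately. If $u, w \in \TTC(\D)$, a commuting product is uniquely determined by its support, which coincides with $\ld$, so $u = w$. If $u, w \in \car(\D)$, a candy is determined by the triple $(x_0, y_0, m)$ of Definition~\ref{caramella}; the pair $(x_0, y_0)$ is read from $\ld(w) \cap \{s_0, s_1\}$ and $\ld(w) \cap \{s_{n+1}, s_{n+2}\}$, while $m/2 = \#\bwO$ by Proposition~\ref{diagrammiasscaramelle}, so $u = w$. If $u, w \in \zz(\D)$, the set $\ld(w)$ selects one of the two forms of Definition~\ref{def:zigzag}, and inside each form Proposition~\ref{diagrammiasszigzag} converts the pair $(k, h)$ into $(f_\bullet, f_\circ)$ by an invertible rule, for instance $f_\bullet = k + 1$ and $f_\circ = k + h$ in form~(1), so $u = w$. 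I view the principal obstacle as the separation of $\zz(\D)$ from $\TTC(\D)$, since neither $\bwO$ loops nor the value $\ab(D_w)$ alone distinguishes them; the argument closes by contrasting the TTC ceiling $f_\bullet + f_\circ \le 2$ with the zigzag floor $\ge 2$ and exploiting the descent rigidity of the two families.
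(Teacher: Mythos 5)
Your proposal is correct and follows essentially the same strategy as the paper: use Theorem~\ref{maindis} and Proposition~\ref{lemmadelta} to read off descents and the loop counts $f_\bullet,f_\circ$ from the diagram, rule out cross-family coincidences (the paper separates candies from the other two families via $\bwO$ and zigzags from $\TTC(\D)$ via descent rigidity and the sum $f_\bullet+f_\circ$, just as you do, only with slightly different bookkeeping), and then recover each element within its family from $\ld$, $\#\bwO$, and the pair $(f_\bullet,f_\circ)$ exactly as in Propositions~\ref{diagrammiasscomm}, \ref{diagrammiasscaramelle}, and \ref{diagrammiasszigzag}. No gaps.
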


\begin{proof}
Since $D_u=D_w$, then $\ld(u)=\ld(w)$ and $\rd(u)=\rd(w)$ by Theorem $\ref{maindis}$. Suppose $u$ and $w$ are not in the same family. Note that one of them must be in the family $\TTC(\D)$ because the cardinalities of the descent sets of a candy and a complete zigzag are never equal. Hence assume $u\in \TTC(\D)$. If $w\in \zz(\D)$, necessarily $\ld(u)=\rd(u)=\ld(w)=\rd(w)$, hence \[\ld(w)=\rd(w)=\{s_0,s_1\}\mbox{ or }\ld(w)=\rd(w)=\{s_{n+1},s_{n+2}\}.\]
Therefore,  $3\le f_\bullet(w)+f_\circ(w)\ne f_\bullet(u)+f_\circ(u)=1$ which is a contradiction by Proposition $\ref{lemmadelta}$. If $w\in \car(\D)$ then we have a contradiction because $D_w$ has at least one $\bwO$ while $D_u$ has no occurrence of $\bwO$ by Proposition \ref{diagrammiasscaramelle} and Proposition \ref{diagrammiasscomm}(b).

Assume now that $u$ and $w$ are in the same family.
\begin{itemize}
    \item If $u,w\in \TTC(\D)$, then $u=w$ trivially.
    \item If $u,w\in \car(\D)$, then consider $u=u_0u_1\cdots u_m$ and $w=w_0w_1\cdots w_p$ their CFNF. Since $\ld(u)=\ld(w)$ and $D_w$ and $D_u$ have the same number of $\bwO$, then $u_0=w_0$ and $m=p$, by Proposition \ref{diagrammiasscaramelle}. Therefore, by Definition \ref{caramella}, $u=w$.
    \item If $u,w\in \zz(\D)$, then $\ld(u)=\ld(w)$ implies that $u$ and $w$ are of the same form (1) or (2) in Definition \ref{def:zigzag}. Moreover, the corresponding $k$ and $h$ are equal as well, since $f_\bullet(w)=f_\bullet(u)$ and $f_\circ(w)=f_\circ(u)$ by Proposition \ref{diagrammiasszigzag}.
\end{itemize}
\end{proof}

\begin{Lemma}
\label{lemmadiagcommuno}
Let $w\in \fc(\D)$ such that $f_\bullet(w)=f_\circ(w)=0$. If the edge $\{i,i'\}$ not decorated appears in $D_w$, then the following hold.
\begin{enumerate}
    \item[(a)] If $i=1$, then $s_0,s_1\notin supp(w)$.
    \item[(b)] If $i=2$, then $s_0,s_1,s_2\notin supp(w)$.
    \item[(c)] If $3\le i\le n$, then $s_{i-1},s_i\notin supp(w)$.
    \item[(d)] If $i=n+1$, then $s_{n},s_{n+1},s_{n+2}\notin supp(w)$.
    \item[(e)] If $i=n+2$, then $s_{n+1},s_{n+2}\notin supp(w)$.
\end{enumerate}
\end{Lemma}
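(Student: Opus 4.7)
I argue by induction on $\ell(w)$. The base case $\ell(w)=0$ forces $w=e$, $D_w=I_{n+2}$ and $\mathrm{supp}(w)=\emptyset$, so all of (a)--(e) hold vacuously. For the inductive step, fix $s_j\in\rd(w)$ and write $w=us_j$ as a reduced product, so that $D_w=D_uD_j$ in $\widehat{\mathcal{P}}^{LR}_{n+2}$. Since any reduced expression of $u$ extends to one of $w$ by appending $s_j$, we inherit $f_\bullet(u)=f_\circ(u)=0$, and the inductive hypothesis applies to $u$.

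The main technical point is to control decorations during the concatenation $D_uD_j$ so that the undecorated status of the edge $\{i,i'\}$ in $D_w$ can be traced back to $D_u$. Here the hypothesis $f_\bullet(w)=f_\circ(w)=0$ does essential work: by Proposition~\ref{lemmadelta}, $D_w$ contains no loop of type $\bO$ or $\wO$, and since such loops persist under all of $r_1,\ldots,r_5$ once created, none can appear during the concatenation. Consequently the reductions $r_3$ and $r_4$, which are the only ones capable of stripping a $\bullet$ or $\circ$ off the edge $\{i,i'\}$ from outside, are never triggered; the remaining reductions $r_1,r_2,r_5$ either act locally by merging adjacent identical decorations on a single edge or simply remove undecorated loops with a factor of $\delta$, so no decoration ever disappears from the edge $\{i,i'\}$ itself.

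I then treat case (c), $3\le i\le n$; cases (a), (b), (d), (e) are identical up to the exclusion set for $j$. Assume $\{i,i'\}$ undecorated appears in $D_w$. If $j\in\{i-1,i\}$, then $D_j$ has a non-propagating edge $\{i-1,i\}$ or $\{i,i+1\}$ on its top face, so the top node $i$ of $D_w$ lies on a non-propagating edge of $D_w$, contradicting the hypothesis. Hence $j\notin\{i-1,i\}$, and in this range $D_j$ (including $D_0$ when $i\ge 3$ and $D_{n+2}$ when $i\le n$) has an undecorated vertical edge at column $i$. It follows that the edge $\{i,i'\}$ of $D_w$ is the direct extension of the edge $\{i,i'\}$ of $D_u$ through that vertical edge of $D_j$, and by the preceding paragraph its absence of decoration descends to $D_u$. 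Applying the inductive hypothesis to $u$ gives $s_{i-1},s_i\notin\mathrm{supp}(u)$, and combined with $j\notin\{i-1,i\}$ this yields $s_{i-1},s_i\notin\mathrm{supp}(w)$. The other cases are identical, with exclusion sets $\{0,1\}$ for (a), $\{0,1,2\}$ for (b), $\{n,n+1,n+2\}$ for (d), and $\{n+1,n+2\}$ for (e) --- precisely the indices $j$ for which the top face of $D_j$ carries a non-propagating edge touching column $i$. The main obstacle is the decoration-tracking step in the second paragraph, which the hypothesis $f_\bullet(w)=f_\circ(w)=0$ is tailor-made to overcome.
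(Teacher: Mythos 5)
Your induction on $\ell(w)$ is a reasonable strategy, and the decoration-tracking paragraph (no $\bO$ or $\wO$ can arise during the concatenation when $f_\bullet(w)=f_\circ(w)=0$, hence $r_3$ and $r_4$ never fire) is essentially sound. The genuine gap is the sentence ``It follows that the edge $\{i,i'\}$ of $D_w$ is the direct extension of the edge $\{i,i'\}$ of $D_u$ through that vertical edge of $D_j$.'' Knowing that $D_j$ has an undecorated vertical edge in column $i$ only tells you that the curve of $D_w$ arriving at the south node $i'$ enters $D_u$ at its south node $i'$; it does not tell you that the edge of $D_u$ issuing from that node is the vertical edge $\{i,i'\}$. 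A priori that edge could be a non-propagating south-face edge $\{b',i'\}$ of $D_u$ with $b\in\{j,j+1\}$; the curve would then pass through the north-face non-propagating edge $\{j,j+1\}$ of $D_j$ and re-enter $D_u$ along a slanted propagating edge $\{i,c'\}$ with $\{b,c\}=\{j,j+1\}$, producing the vertical edge $\{i,i'\}$ of $D_w$ even though $D_u$ contains no edge $\{i,i'\}$ at all, so the inductive hypothesis cannot be invoked. Such patterns genuinely occur in diagrams of FC elements: for $u=s_2s_4s_3s_5s_4$ in $\widetilde{D}_7$ the diagram $D_u$ has the undecorated propagating edge $\{6,2'\}$ and the undecorated south edge $\{3',6'\}$, which is exactly this configuration for $j=2$, $i=6$. (In that instance $us_2$ happens not to be fully commutative, but you give no argument that full commutativity always excludes the weaving, and that exclusion is precisely the combinatorial heart of the lemma.)

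The paper's proof confronts this point directly: it writes $w=u_0\cdots u_m$ in Cartier--Foata normal form, so that each factor $D_{u_l}$ consists only of simple and vertical edges (Proposition \ref{diagrammiasscomm}), takes the first level $l$ at which $s_{i-1}$ or $s_i$ occurs, and shows that any deflection of the column-$i$ curve can only return to column $i$ by travelling through the doubled columns $\{1,2\}$ or $\{n+1,n+2\}$, where it necessarily acquires a $\bullet$ or $\circ$ that cannot be removed, because $f_\bullet(w)=f_\circ(w)=0$ rules out the loops needed to trigger $r_3$ or $r_4$; this contradicts the edge being undecorated. Your one-letter peeling could in principle be repaired, but only by adding an analogous analysis ruling out (or handling) the weaving case, which is the part you have omitted.
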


\begin{proof}
We only prove point (c) since the other cases are similar. Then set $3\le i\le n$, and let $w=u_0\cdots u_m$ be its CFNF. We have $D_w=D_{u_0}\cdots D_{u_m}$ with $D_{u_{j}}$, $0\le j\le m$ described in Proposition $\ref{diagrammiasscomm}$, since $u_j\in \TTC(\D)$. First, note that $s_{i-1},s_i\notin supp(u_0)=\ld(w)$ by Lemma \ref{discesearchi}. Now we prove the result by contradiction. Let $1\le l \le m$ be the smallest index such that $s_{i-1}$ or $s_i$ is in $supp(u_l)$, and assume, without loss of generality, that $s_{i-1}\in supp(u_l)$. By Lemma \ref{discesearchi}, $\{i-1,i\}$ and $\{(i-1)',i'\}$ are in $D_{u_{l}}$, and by Lemma \ref{diagrammiasscomm} and Corollary \ref{corollary:archidiscese}, the undecorated edge $\{i,i'\}$ belongs to $D_{u_j}$, for all $0\le j<l$. Note that the configuration depicted in Figure \ref{fig:placeholder}(a) never appears by fully commutativity. Therefore, since $\{i,i'\}$ appears in $D_w$ by hypothesis, the only possibility is that there exist $h_1<h_2$ such that $s_0\in supp(u_{h_1})$ and $s_1\in supp(u_{h_2})$, or vice versa, and $s_0,s_1\notin supp(u_j)$ for all $h_1<j<h_2$, see Figure \ref{fig:placeholder}(b). Suppose $h_1$ is minimal, then in the diagram $D_{u_0}D_{u_1}\cdots D_{u_l}\cdots D_{u_{h_1}}\cdots D_{u_{h_2}}$ the edge $\{i,2'\}$ is decorated with a $\bullet$. Now, since $D_w$ contains $\{i,i'\}$, the edge $\{2, i'\}$ in $D_{u_{h_2+1}}\cdots D_{u_m}$ either is undecorated or its first decoration is a $\circ$. In any case, $\{i,i'\}$ is decorated since $f_\bullet(w)=f_\circ(w)=0$ by hypothesis, which is a contradiction. The same argument holds if we assume that $s_i\in supp(u_l)$, by exchanging the $\circ$ decorations with the $\bullet$.

\end{proof}

\begin{figure}
    \centering    \includegraphics[width=0.5\linewidth]{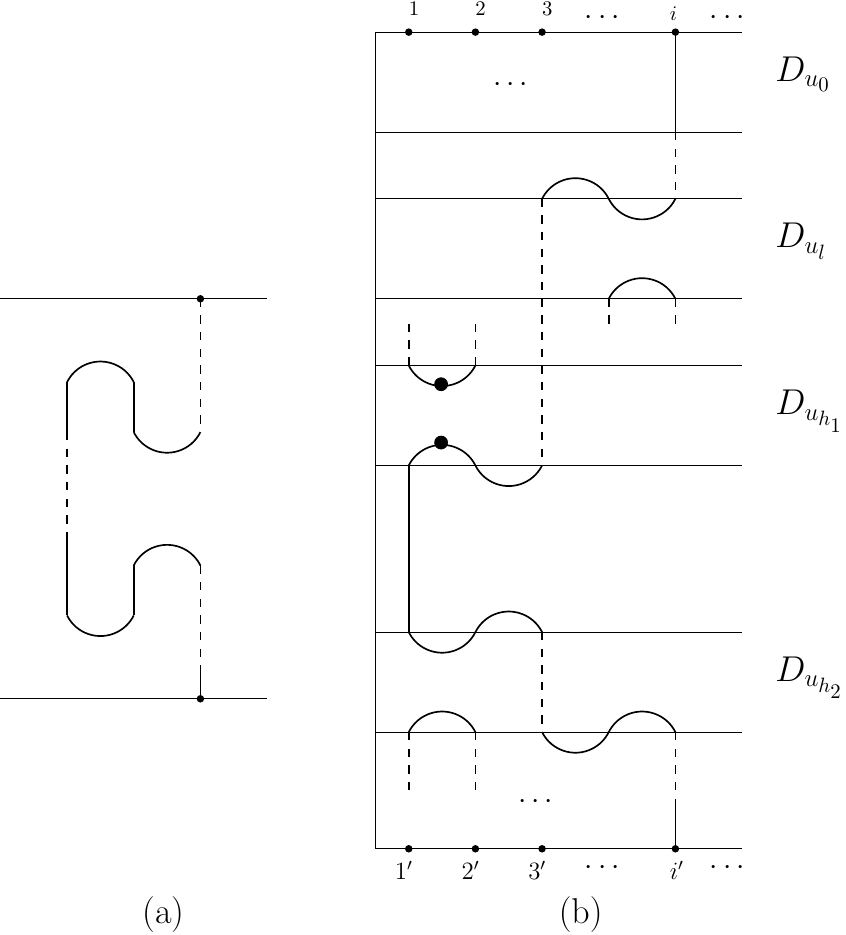}
    \caption{Concatenation $D_w=D_{u_0}\cdots D_{u_m}$ for the proof of Lemma \ref{lemmadiagcommuno}.}
    \label{fig:placeholder}
\end{figure}

\begin{Proposition}
\label{finalcomm}
Let $w\in \fc(\D)$. If $u\in \TTC(\D)$ such that $D_u=D_w$, then $u=w$.
\end{Proposition}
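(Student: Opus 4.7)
By Theorem \ref{maindis}, $\ld(w) = \ld(u)$ and $\rd(w) = \rd(u)$. Since $u \in \TTC(\D)$, the set $T := \mathrm{supp}(u) = \ld(u) = \rd(u)$ consists of mutually commuting generators and $u = \prod_{t \in T} t$. The plan is to show $\mathrm{supp}(w) = T$: once this holds, any $w \in \fc(\D)$ whose support is a mutually commuting set must be the reduced product of that support (each generator appearing exactly once, since $t^2 = e$), so $w = u$. The inclusion $T \subseteq \mathrm{supp}(w)$ is immediate from $\ld(w) = T$, so the content of the proof lies in the reverse inclusion.

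For this, I would apply Lemma \ref{lemmadiagcommuno} to $D_w = D_u$. Under the hypothesis $f_\bullet(w) = f_\circ(w) = 0$, every vertical undecorated edge $\{j, j'\}$ of $D_u = D_w$ forces the corresponding neighboring generators out of $\mathrm{supp}(w)$, and a case-by-case inspection across all columns of $D_u$ excludes from $\mathrm{supp}(w)$ every $s_i \notin T$ that has at least one neighbor (in the Coxeter graph) lying outside $T$. The case $f_\bullet(w) = 1$ (respectively $f_\circ(w) = 1$) is handled by first observing that the $\bO$-loop (respectively $\wO$-loop) in $D_u = D_w$ forces $\{s_0, s_1\} \subseteq T$ (respectively $\{s_{n+1}, s_{n+2}\} \subseteq T$); one then extracts the commuting prefix $s_0 s_1$ (respectively $s_{n+1} s_{n+2}$) from both $w$ and $u$ and applies the lemma to the residual. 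The delicate remaining subcase is when $s_i \notin T$ but both $s_{i-1}, s_{i+1} \in T$: no vertical edge of $D_u$ directly witnesses the exclusion of $s_i$. For this I would pass to the Cartier--Foata normal form $w = u_0 u_1 \cdots u_m$; if $s_i \in \mathrm{supp}(w)$, then $s_i \in u_k$ for some minimal $k \geq 1$, Theorem \ref{normalform}(d) forces a neighbor of $s_i$ into $u_{k-1}$, and a direct computation in the three-column neighborhood of $s_i$ shows that the simple edge of $s_i$ in $D_{u_k}$ bridges the simple edges of $s_{i-1}, s_{i+1}$ in $D_{u_{k-1}}$ into a single non-propagating edge spanning three columns. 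Propagating this through $D_{u_{k+1}} \cdots D_{u_m}$ produces a non-simple or non-vertical propagating edge in $D_w$, contradicting Proposition \ref{diagrammiasscomm}(a) applied to $D_u$.

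The main obstacle is this last subcase: one must verify that the non-simple edge created at the interface of $u_{k-1}$ and $u_k$ cannot be subsequently cancelled or rearranged into a simple or vertical edge by the remaining layers $u_{k+1}, \ldots, u_m$, which requires either a careful invariant argument on edge endpoints or an explicit tracking of edges through the entire normal form.
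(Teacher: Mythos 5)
Your reduction of the problem to the equality $\mathrm{supp}(w)=T$ is sound, and the easy half of it (every $s_i\notin T$ having some neighbour outside $T$ is expelled from $\mathrm{supp}(w)$ by an undecorated vertical edge of $D_u=D_w$ via Lemma \ref{lemmadiagcommuno}) works. But the subcase you yourself flag --- $s_i\notin T$ while all Coxeter-graph neighbours of $s_i$ lie in $T$ --- is a genuine gap, not a detail: there is no vertical edge of $D_u$ witnessing the exclusion of $s_i$, and the argument you sketch (a ``bridged'' three-column non-propagating edge created at the interface of $u_{k-1}$ and $u_k$ that survives concatenation with $D_{u_{k+1}}\cdots D_{u_m}$) is exactly the kind of global edge-tracking through an arbitrary number of layers that the paper's machinery never establishes and that you do not carry out. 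As written, the proof is incomplete at precisely this point.

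The paper avoids this configuration entirely by organizing the proof around irreducibility rather than support. It assumes $w$ is star reducible, say $w\red_{s_i}s_{i-1}w$, and observes that the very definition of a star reduction forces the \emph{other} neighbour of $s_i$ out of $\ld(w)$: indeed $s_i\in\ld(s_{i-1}w)$ requires $s_{i+1}\notin\ld(w)$ (Remark \ref{oss}(4)). Hence both $s_i$ and $s_{i+1}$ miss $\ld(w)=\ld(u)$, so the node $i+1$ of $D_u=D_w$ lies on an undecorated vertical edge $\{i+1,(i+1)'\}$ by Proposition \ref{diagrammiasscomm} and Corollary \ref{corollary:archidiscese}, and Lemma \ref{lemmadiagcommuno} then expels $s_i$ from $\mathrm{supp}(w)$, contradicting $s_{i-1}s_i\in\pr(w)$ (the cases $f_\bullet(w)=1$ or $f_\circ(w)=1$ are first peeled off as you suggest). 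This shows $w\in\I(\D)$, and Proposition \ref{inietirr} --- which you never invoke --- finishes the proof. Your problematic configuration ($s_i\notin T$ with both neighbours in $T$) simply never arises as a star reduction, because $st\in\pr(w)$ with $t$'s other neighbour in $\ld(w)$ is not a star-reducible prefix. To repair your proof without new diagrammatic lemmas, replace the goal $\mathrm{supp}(w)\subseteq T$ by the goal ``$w$ is star irreducible'' and conclude via Proposition \ref{inietirr}.
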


\begin{proof}
We distinguish four cases.
\begin{itemize}
\item[(1)] Assume $f_\bullet(w)=f_\circ(w)=0$. If $w\in \I(\D)$, then $u=w$ by Proposition \ref{inietirr}. Otherwise, we suppose that $w\red_{s_i} s_{i-1}w$, with $3\le i\le n-1$. Therefore since $s_{i-1}s_{i}$ is a prefix of $w$, we have that $s_i,s_{i+1}\notin \ld(w)$. Moreover, since $D_u=D_w$, the non-decorated edge $\{i+1,(i+1)'\}$ occurs in $D_w$ by Proposition \ref{diagrammiasscomm} and Corollary \ref{corollary:archidiscese}, which is not possible by Lemma $\ref{lemmadiagcommuno}$. The cases when $i\in \{2,n,n+1\}$, or $s_0s_2$, $s_{n}s_{n+2}$ or $s_{j}s_{j'}$, with $m_{s_j,s_{j'}}=3$ and $j>j'$, is a prefix of $w$ can be treated in the same way using Lemma \ref{lemmadiagcommuno}. So $w$ is necessarily irreducible and $u=w$. 
    \item[(2)] If $f_\bullet(w)=1$ and $f_\circ(w)=0$, then $s_0,s_1\in \ld(w)\cap \rd(w)$, because $\ld(u)=\rd(u)=\ld(w)=\rd(w)$ by Theorem $\ref{maindis}$. Moreover, it is possible to write $w=s_0s_1w'$ reduced and $s_0,s_1,s_2\notin supp(w')$. Let $u=s_0s_1u'$ reduced and $u'\in \TTC(\D)$. Observe that, $\{1,1'\}$ and $\{2,2'\}$ appear in $D_{w'}$ and in $D_{u'}$, since $s_0,s_1,s_2\notin supp(w')\cup supp(u')$. Then, $D_{u'}=D_{w'}$ since the portions of $D_{w'}$ and $D_{u'}$ on the right of $\{2,2'\}$ are both equal to the corresponding portion of $D_u=D_w$; for an example see Figure $\ref{aiutfig}$. Therefore, $u'=w'$ by point (1), thus $u=w$. 
    \item[(3)] If $f_\bullet(w)=0$ and $f_\circ(w)=1$, then this case is analogous to the previous one. 
    \item[(4)] If $f_\bullet(w)=f_\circ(w)=1$, then $w=s_0s_1w'$ and $u=s_0s_1u'$ both reduced, $s_0,s_1,s_2\notin supp(w')\cup  supp(u')$ and $D_{u'}=D_{w'}$ for the same argument in point (2). Moreover, $f_\bullet(w')=0$ and $f_\circ(w')=1$, so $u'=w'$ by point (3). Hence, $u=w$. 
\end{itemize}
\end{proof}

\begin{figure}[h!]
    \centering
    \includegraphics[width=0.4\linewidth]{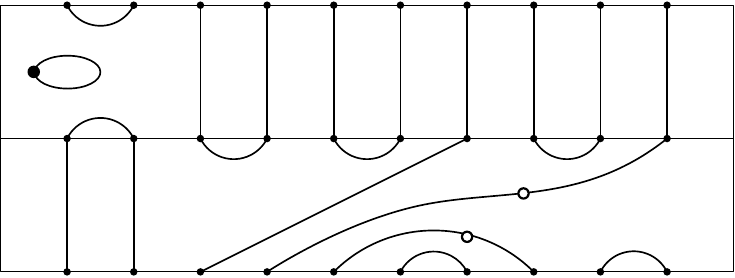}
    \caption{$D_w=D_{s_0s_1}D_w'$, $w=s_0s_1w'$ and $w'=s_3s_5s_8s_4s_6s_{10}s_5s_7s_8s_6s_9$.}
    \label{aiutfig}
\end{figure}

\begin{Proposition}
\label{proposition:zzecarcontr}
Let $w\in \fc(\D)$. If $u\in \zz(\D)\cup \car(\D)$ such that $D_u=D_w$, then $u=w$.
\end{Proposition}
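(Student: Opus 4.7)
The plan is to show that $w$ must itself be irreducible, so that $u=w$ then follows from Proposition \ref{inietirr}. The starting point is Theorem \ref{maindis}, which yields $\ld(w)=\ld(u)$ and $\rd(w)=\rd(u)$. I will handle separately the two possibilities for $u$.

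When $u\in \zz(\D)$, both $\ld(u)$ and $\rd(u)$ belong to $\{\{s_0,s_1\},\{s_{n+1},s_{n+2}\}\}$ by Definition \ref{def:zigzag}, and the same holds for $\ld(w)$ and $\rd(w)$. Applying Lemma \ref{lemma:condzigzag} on both sides forces $w$ to be both left and right irreducible and to lie either in $\TTC(\D)$ or among the weak zigzags. The option $w\in \TTC(\D)$ is ruled out by Proposition \ref{finalcomm} (applied with the roles of $u$ and $w$ exchanged), since it would force $u=w$ in contradiction with $u\in \zz(\D)\setminus \TTC(\D)$. Hence $w$ is an irreducible weak zigzag, which by the remark following Definition \ref{weakzigzag} forces $w\in \zz(\D)$, and Proposition \ref{inietirr} delivers $u=w$.

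When $u\in \car(\D)$ with CFNF $u_0 u_1 \cdots u_m$, the descent set is $\ld(u)=supp(u_0)=\{x_0,s_3,s_5,\ldots,s_{n-1},y_0\}$, which is too large for Lemma \ref{lemma:condzigzag} to apply. I would prove directly that $w$ is left irreducible by a blocker count. Suppose by contradiction that $w\red_t sw$ with $s\in\ld(w)=\ld(u)$ and $m_{s,t}=3$. Since $supp(u_0^w)=\ld(w)=supp(u_0)$, the condition $t\in\ld(sw)$ demands that every Coxeter-graph neighbor of $t$ inside $supp(u_0)$ be equal to $s$. Enumerating the candidate partners $t\in\{s_2,s_4,\ldots,s_n\}$ yields two distinct such neighbors in each case: $t=s_2$ is blocked by $x_0$ and $s_3$; an internal $t=s_{2j}$ by $s_{2j-1}$ and $s_{2j+1}$; and $t=s_n$ by $s_{n-1}$ and $y_0$. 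Hence no valid $s$ exists, so $w$ is left irreducible; a symmetric argument using $\rd(w)=supp(u_m)$ gives right irreducibility. Therefore $w\in\I(\D)$, and Proposition \ref{inietirr} closes the proof. The main obstacle is precisely this blocker count: its success relies on the rigid structure of $supp(u_0)$, which contains every odd index in $\{3,\ldots,n-1\}$ together with one generator from each extremal pair, exactly what guarantees a second blocker for every possible reduction partner.
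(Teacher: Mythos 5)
Your proposal is correct and follows essentially the same route as the paper: transfer the descent sets via Theorem \ref{maindis}, deduce that $w$ is irreducible (via Lemma \ref{lemma:condzigzag} in the zigzag case, and via the rigid structure of $\ld(u)=supp(u_0)$ in the candy case), and conclude with Proposition \ref{inietirr}. Your explicit blocker count merely fills in the detail the paper compresses into the sentence ``it cannot admit any prefix or suffix of the form $st$'', and your detour through Proposition \ref{finalcomm} in the zigzag case is harmless but unnecessary, since Proposition \ref{inietirr} applies directly once $w\in\I(\D)$ is known.
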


\begin{proof}
If $u\in \zz(\D)$, then $\ld(w),\rd(w)\in \{\{s_0,s_1\},\{s_{n+1},s_{n+2}\}\}$ by Theorem $\ref{maindis}$. Hence, $w$ is both left and right irreducible by Lemma \ref{lemma:condzigzag}, so $w\in \I(\D)$. Then, $u=w$ by Proposition \ref{inietirr}. 

If $u\in \car(\D)$, by Theorem \ref{maindis} we can assume $\ld(u)=\ld(w)$ and $\rd(u)=\rd(w)$. Therefore, $w\in \I(\D)$ since it cannot admit any prefix or suffix of the form $st$, $m_{s,t}=3$. Then, $u=w$ by Proposition \ref{inietirr}.
\end{proof}

\begin{Theorem}
\label{lastresult}
The map $\widetilde{\theta}_D:\tl(\D) \rightarrow \Di(\D)$ is a $\Z[\delta]$-algebra isomorphism.
\end{Theorem}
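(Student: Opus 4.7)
The plan is to verify surjectivity (which is immediate) and then injectivity (which contains all the work). Surjectivity of $\widetilde{\theta}_D$ follows from the fact that the simple diagrams $D_0, \ldots, D_{n+2}$ generate $\Di(\D)$ as a $\Z[\delta]$-algebra and are by construction the images of the algebra generators $b_0, \ldots, b_{n+2}$ of $\tl(\D)$. For injectivity, since $\{b_w : w \in \fc(\D)\}$ is a $\Z[\delta]$-basis of $\tl(\D)$ and, by Proposition \ref{lemmadelta} together with the reduction system of Figure \ref{rel}, each $D_w$ (for $w \in \fc(\D)$) is a single fully reduced basis element of $\PLRn$ with no $\delta$-coefficient, the task reduces to proving $D_u = D_w \Rightarrow u = w$ for all $u, w \in \fc(\D)$.

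I would then follow the three-step structure announced at the start of the section. Step 1 is Proposition \ref{inietirr}, which gives injectivity of $w \mapsto D_w$ on $\I(\D)$. Step 2 combines Proposition \ref{finalcomm} (treating $u \in \TTC(\D)$) and Proposition \ref{proposition:zzecarcontr} (treating $u \in \zz(\D) \cup \car(\D)$) with the classification $\I(\D) = \TTC(\D) \sqcup \zz(\D) \sqcup \car(\D)$ from Theorem \ref{decimo}; together they yield $u = w$ whenever $u \in \I(\D)$ and $D_u = D_w$.

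For the remaining case, in which both $u$ and $w$ are reducible, I would use strong induction on $\ell(u) + \ell(w)$. Picking a left star reduction of $u$ and writing $u = stu''$ reduced with $m_{s,t} = 3$, the defining relation $b_t b_s b_t = b_t$ gives $b_t b_u = b_{tu''}$ in $\tl(\D)$ (using that $t \notin \ld(u'')$ and $tu'' = su \in \fc(\D)$), and applying $\widetilde{\theta}_D$ produces $D_t D_u = D_{tu''}$. Combined with $D_u = D_w$ this yields $D_t D_w = D_{tu''}$. Writing $b_t b_w = \delta^k b_y$ in $\tl(\D)$ and pushing through $\widetilde{\theta}_D$ gives $\delta^k D_y = D_{tu''}$; since the right-hand side carries no $\delta$-factor by Proposition \ref{lemmadelta}, necessarily $k = 0$ and $D_y = D_{tu''}$. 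The inductive hypothesis, applied to the pair $(y, tu'')$ of strictly smaller total length, then gives $y = tu''$, so $b_t b_w = b_{tu''}$ in $\tl(\D)$.

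Extracting $w = u$ from this last identity is where I expect the main obstacle to lie. A case analysis of how $b_t$ acts on $b_w$ rules out $t \in \ld(w)$ (which would introduce a $\delta$-factor) and rules out $t \notin \ld(w)$ with $tw \in \fc(\D)$ (which would force $w = u''$, contradicting $D_u = D_w$ by comparison of left-descent sets via Theorem \ref{maindis}). Hence $w$ must have a prefix $s't$ with $m_{s',t} = 3$, and a single iteration of the same analysis produces $w = s't u''$ for some neighbor $s'$ of $t$, since a further iteration would introduce the forbidden braid pattern $ts't$ into $w$. Finally, Theorem \ref{maindis} forces $s' \in \ld(u)$; inspecting the reduced expressions of $u = stu''$ then shows that $s' \neq s$ would require $t \in \ld(u'')$, which is excluded by the reducedness of $stu''$ together with full commutativity of $u$. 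Therefore $s' = s$ and $w = u$, closing the induction.
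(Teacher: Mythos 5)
Your overall architecture coincides with the paper's: surjectivity is immediate, injectivity reduces to showing $D_u=D_w\Rightarrow u=w$, the base case is dispatched by Propositions \ref{inietirr}, \ref{finalcomm} and \ref{proposition:zzecarcontr} together with Theorem \ref{decimo}, and the general case proceeds by performing one star reduction on one of the two elements and comparing left descent sets via Theorem \ref{maindis}. The individual steps of your case analysis are also sound: excluding $t\in\ld(w)$ via the absence of undecorated loops (Proposition \ref{lemmadelta}), deriving the descent-set contradiction when $w=u''$, and forcing $s'=s$ by full commutativity all check out.

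The gap is in the choice of induction parameter. You induct on $\ell(u)+\ell(w)$ and invoke the inductive hypothesis on the pair $(y,tu'')$, claiming it has strictly smaller total length. But in the case $t\notin\ld(w)$ with $tw\in\fc(\D)$ (equivalently, $st$ is not a prefix of $w$) one has $y=tw$ with $\ell(y)=\ell(w)+1$, while $\ell(tu'')=\ell(u)-1$, so the total length is unchanged and the inductive hypothesis does not apply --- and this is exactly the case you must eliminate in order to force $st$ to be a prefix of $w$. The case cannot be dismissed by a cheaper invariant either: $tw\red_s w$ is itself a star reduction, so $D_{tw}$ and $D_{tu''}$ share the same descent sets, $\ab$-value and loop counts, and only the (inductively assumed) injectivity separates them. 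The paper closes precisely this hole by inducting on the number $l$ of star reductions needed to bring one of the two elements to an irreducible element, a quantity well defined by Theorem \ref{theorem:starope}(b); this strictly decreases when passing from that element to its reduction, regardless of what happens to the length of the companion. If you replace your induction parameter by $l$ and otherwise keep your argument, the proof goes through.
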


\begin{proof}
First, we have that $\widetilde{\theta}_D$ is a surjective homomorphism by definition. So we only need to show that the set $\{D_w\in \Di(\D)\mid w\in \fc(\D)\}$ is a basis for $\Di(\D)$. Since the diagram $D_w$, for all $w\in \fc(\D)$, does not contain any undecorated loops by Proposition \ref{lemmadelta}, it is sufficient proving that if $u,w \in \fc(\D)$ such that $D_u=D_w$ then $u=w$. Suppose that $w$ is reducible to $v\in\I(\D)$ by a sequence $w_0=w\red w_1\red w_2\red \cdots \red w_l=v$.
We proceed by induction on $l$. If $l=0$, then $w\in \I(\D)$, hence $u=w$ by Propositions $\ref{finalcomm}$ and $\ref{proposition:zzecarcontr}$. Without loss of generality, suppose that $w\red_t sw=:w_1$. We have that $s\in \ld(u)$, since $\ld(u)=\ld(w)$ by Theorem \ref{maindis}. Suppose that $st$ is not a prefix of $u$, then $tu\in \fc(\D)$ reduced by Remark \ref{oss}(5). Moreover, $D_{w_1}=D_tD_w=D_{t}D_u=D_{tu}$ so by inductive hypothesis, it means $w_1=tu$. But this is a contradiction since $ts$ is not a prefix of $w_1$ but it is a prefix of $tu$. Therefore, $u\red_t su=:u_1$. Hence, $D_{w_1}=D_tD_w=D_tD_u=D_{u_1}$ so by inductive hypothesis, $u_1=w_1$ and $u=w$.  
\end{proof}
\section{Some remarks on the Lusztig's $\ab$-function}\label{sec:afunc}
In \cite{lusztig_cells}, Lusztig introduced the function $\ab:W\rightarrow\mathbb{N}_0$ which plays an important role in Kazdhan--Lusztig theory. For instance, this function is constant on the $LR$-cells of a Coxeter group \cite[Theorem 5.4]{lusztig_cells}. In \cite{shi_fully}, Shi provided a more manageable way to compute the $\ab$-function of a $\fc$ element of a finite or affine Weyl group $W$ through its heap. Let $w\in \fc(W)$, define $n(w)$ as the cardinality of the maximum antichain in the heap $H(w)$. Equivalently, by \cite[Lemma 2.7]{shi_fully}, $n(w)$ is the maximum integer $k$ such that $w=uw'v$ is reduced, $\ell(w')=k$ and $w'\in\TTC(W)$.
In \cite[Theorem 3.1]{shi_fully}, Shi proved the following important result.
\begin{Theorem}
    Let $w\in \fc(W)$. Then $\ab(w)=n(w)$.
\end{Theorem}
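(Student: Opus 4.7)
The plan is to establish both inequalities $\ab(w)\ge n(w)$ and $\ab(w)\le n(w)$ separately, combining representation-theoretic properties of Lusztig's $\ab$-function with the combinatorics of heaps.

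For the lower bound, I would use the equivalent definition of $n(w)$ recalled in the excerpt and write $w=uw'v$ as a reduced product with $w'=s_{i_1}\cdots s_{i_k}\in \TTC(W)$ and $k=n(w)$. Since the generators $s_{i_j}$ pairwise commute, $w'$ is the longest element $w_J$ of the standard parabolic subgroup $W_J$ of type $A_1^{k}$, and a classical result of Lusztig gives $\ab(w_J)=\ell(w_J)=k$. The second general fact to invoke is the monotonicity property $\ab(xw_Jy)\ge \ab(w_J)$ whenever the product is reduced and $w_J$ is the longest element of a finite standard parabolic subgroup. Combining these two inputs immediately yields $\ab(w)\ge n(w)$.

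For the reverse inequality, I would proceed by induction on $\ell(w)$ using the standard estimate $\ab(sw)\le \ab(w)+1$, valid for any $s\in S$. Writing $w=sw_1$ with $s\in \ld(w)$ and applying the inductive hypothesis to $w_1$, one compares $n(w)$ with $n(w_1)$ through a heap-theoretic analysis: removing the top vertex of $H(w)$ labelled $s$ yields $H(w_1)$, and any maximum antichain of $H(w_1)$ either extends to one of $H(w)$ of the same size, or — when $s$ is incomparable to every element of the antichain — can be enlarged by adding $s$ itself. Matching the two regimes (whether $\ab(w)$ strictly increases versus being preserved under the passage $w_1\mapsto w$) with the corresponding behaviour of $n$ closes the induction.

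The main obstacle is the upper bound. Lusztig's $\ab$-function is defined asymptotically, through the degree of the structure constants $h_{x,y,w}$ in the Kazhdan--Lusztig basis, so bounding it from above requires producing sufficiently explicit relations in the Hecke algebra. For the classical and affine types considered in this paper, a cleaner route is available via a faithful diagrammatic representation of the generalized Temperley--Lieb algebra, such as $\Di(\D)$ from Section \ref{sec:diagrams}: one argues that $\ab(w)$ coincides with the diagrammatic invariant $\ab(D_w)$ (the number of non-propagating edges on the north face), and then the inequality $\ab(D_w)\le n(w)$ becomes a combinatorial statement, since each such non-propagating edge isolates a block of commuting generators that serves as a factor $w'\in \TTC(W)$ of $w$ with $\ell(w')$ detected on the north face.
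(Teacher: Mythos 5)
This statement is not proved in the paper at all: it is quoted verbatim from Shi (the paper cites it as \cite[Theorem 3.1]{shi\_fully}), so there is no in-paper argument to compare yours against. Judged on its own, your lower bound is the standard one and is fine in outline: writing $w=uw'v$ reduced with $w'$ the longest element of a parabolic of type $A_1^{n(w)}$, one has $w\leq_{LR} w'$, and Lusztig's properties for finite and affine Weyl groups give $\ab(w)\geq \ab(w')=\ell(w')=n(w)$. The problems are in the upper bound, and they are genuine.

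First, the estimate $\ab(sw)\leq \ab(w)+1$ ``valid for any $s\in S$'' is false: in type $A_2$ take $w=ts$ and $sw=sts$, the longest element; then $\ab(sts)=\ell(sts)=3$ while $\ab(ts)+1=2$. Restricting to the case where both $w$ and $sw$ are fully commutative does not rescue the argument, because proving the estimate in that restricted setting is essentially equivalent to the upper bound you are trying to establish, so the induction is circular (the only reason to believe $\ab(sw)\leq\ab(w)+1$ for FC elements is that $n(sw)\leq n(w)+1$ together with the theorem itself). Second, the proposed diagrammatic fallback contradicts the paper explicitly: Section 7 points out that for the type $\D$ realization one has $\ab(D_{s_0s_1})=1$ while $n(s_0s_1)=2$, which is precisely why the corrected statistic $\at(D_w)$ (adding the counts of $\bO$ and $\wO$) is introduced in Theorem \ref{theorem:a-lusztig}. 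So the identity $\ab(w)=\ab(D_w)$ you want to ``argue'' is simply not true here; it holds for Ernst's type $\widetilde{C}_n$ diagrams but not for these. Moreover, the theorem is stated for arbitrary finite and affine Weyl groups, most of which have no such diagram calculus, and identifying Lusztig's asymptotically defined $\ab$ with any diagram statistic is itself at least as deep as the theorem. The upper bound therefore remains unproved in your proposal; Shi's actual argument requires different input (an analysis of the structure constants in the Temperley--Lieb quotient together with Lusztig's bounds), not the two routes you sketch.
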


In \cite[Lemma 5.4, 5.5]{ErnstDiagramII}, Ernst proved that when $W$ is an affine Coxeter system of type $\widetilde{C}_n$, then $n(w)=\ab(D_w)$, where $D_w$ is the decorated diagram associated to $w\in \fc(\widetilde{C}_n)$ in the diagrammatic realization of $\tl(\widetilde{C}_n)$ he introduced in \cite{ErnstDiagramI,ErnstDiagramII}, and $\ab(D_w)$ is the number of non-propagating edges on the north face of $D_w$. However, this property does not hold for the diagrammatic realization of $\tl(\D)$ used in this paper. For instance, if $w=s_0s_1$, we have $2=n(s_0s_1)\neq \ab(D_w)=1$. Our goal is to provide a function defined on the $LR$-decorated diagrams of type $\D$, that coincides with Lusztig's $\ab$-function.

\begin{Definition}

Let $w \in \fc(\D)$. We define the function $\at(D_w)$ as follows:
$$
\at(D_w) :=
\begin{cases}
\ab(D_w) + \#\bO + \#\wO, & \text{if } \ab(D_w) > 1; \\
\ab(D_w) + \lambda, & \text{if } \ab(D_w) = 1,
\end{cases}
$$
where
$$
\lambda :=
\begin{cases}
0, & \text{if } \#\bO = \#\wO = 0; \\
1, & \text{otherwise}.
\end{cases}
$$

\end{Definition}

\begin{Theorem}\label{theorem:a-lusztig}
    If $w\in \fc(\D)$, then $n(w)=\at(D_w)$.
\end{Theorem}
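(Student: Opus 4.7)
The plan is to establish $n(w)=\tilde{a}(D_w)$ by reducing to the classification of irreducible elements from Theorem~\ref{decimo}. I will first show that both quantities are constant along any sequence of star reductions, then verify the identity for each family of irreducible elements.

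\emph{Invariance under star reductions.} For $\tilde{a}$, invariance is immediate: $\mathbf{a}(D_w)$ is preserved by Remark~\ref{remark:a-value}, while $\#\bO=f_\bullet(w)$ and $\#\wO=f_\circ(w)$ by Proposition~\ref{lemmadelta}, and $f_\bullet,f_\circ$ are preserved by Remark~\ref{rem:occirr}(2). For $n(w)$, suppose $w\red_t sw=w'$, so $w=stw''$ is a reduced product with $s$ at position $1$ and $t$ at position $2$ of $H(w)$, with $s\prec t$ by the direct rule. Then $H(w')$ is obtained from $H(w)$ by deleting the minimum $s$. Clearly $n(w)\ge n(w')$. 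Conversely, let $A$ be a maximum antichain of $H(w)$; if $s\notin A$ then $A\subset H(w')$, and if $s\in A$ then $t\notin A$ since $s\prec t$. I claim $A':=(A\setminus\{s\})\cup\{t\}$ is an antichain of the same size in $H(w')$. Indeed, for any $x\in A\setminus\{s\}$: a relation $t\prec x$ would compose with $s\prec t$ to give $s\prec x$, contradicting $A$ being an antichain; and since $x$ has position $\ge 3$ while $t$ has position $2$, the condition $x\prec t$ is ruled out by the position-monotonicity of chains. So $x$ and $t$ are incomparable in $H(w)$, hence in $H(w')$. Right reductions are handled symmetrically.

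\emph{Irreducible base cases.} By Theorem~\ref{decimo}, it suffices to verify the identity on $\TTC(\D)$, $\zz(\D)$, and $\car(\D)$. For $v\in\TTC(\D)$, the heap is a trivial antichain, so $n(v)=\ell(v)$; setting $p:=1$ if $\{s_0,s_1\}\subseteq\mathrm{supp}(v)$ (and $0$ otherwise), and analogously $q$ for $\{s_{n+1},s_{n+2}\}$, Proposition~\ref{diagrammiasscomm} yields $\mathbf{a}(D_v)=\ell(v)-p-q$ with $\#\bO=p$ and $\#\wO=q$. A short case analysis on $(p,q)$ and the threshold $\mathbf{a}(D_v)>1$ vs.\ $\mathbf{a}(D_v)=1$ (the borderline $v=s_0 s_1$ or $v=s_{n+1}s_{n+2}$) confirms $\tilde{a}(D_v)=\ell(v)$ in every case. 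For $v\in\zz(\D)$ the heap has the antichain $\{s_0,s_1\}$ or $\{s_{n+1},s_{n+2}\}$ at each end, while the zigzag chain through the middle forces any three elements to include a comparable pair, giving $n(v)=2$; by Proposition~\ref{diagrammiasszigzag}, $\mathbf{a}(D_v)=1$ and $f_\bullet(v),f_\circ(v)\ge 1$, so $\lambda=1$ and $\tilde{a}(D_v)=2$.

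\emph{Candy case and conclusion.} For $v\in\car(\D)$ (so $n$ is even), Definition~\ref{caramella} shows that the odd rows of $H(v)$ consist of $n/2+1$ pairwise commuting generators (forming an antichain), while the even rows have size $n/2$; combining elements from distinct rows is blocked by transitive chains such as $x_0\prec s_2\prec x_1$, so $n(v)=n/2+1$. By Proposition~\ref{diagrammiasscaramelle} all loops are $\bwO$, so $\#\bO=\#\wO=0$, and by Corollary~\ref{corollary:archidiscese} each of the $|\ld(v)|=n/2+1$ left descents contributes a distinct non-propagating edge on the north face, giving $\mathbf{a}(D_v)=n/2+1$ and therefore $\tilde{a}(D_v)=n/2+1$. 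Finally, any $w\in\fc(\D)$ reduces to some $v\in\I(\D)$, and by invariance together with the base cases $n(w)=n(v)=\tilde{a}(D_v)=\tilde{a}(D_w)$. The main obstacle is the antichain-exchange step $s\to t$ in the heap invariance argument; a secondary delicate point is the exclusion of cross-row antichains in the candy case via the transitive relations between consecutive odd rows.
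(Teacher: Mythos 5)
Your proof is correct and follows essentially the same strategy as the paper: verify $n(v)=\at(D_v)$ on the three families of irreducible elements from Theorem~\ref{decimo} (with the same computations of $\ab(D_v)$, $\#\bO$ and $\#\wO$ via Propositions~\ref{diagrammiasscomm}, \ref{diagrammiasszigzag}, \ref{diagrammiasscaramelle} and \ref{lemmadelta}), and then propagate the identity via invariance of both sides under star reductions. The only substantive difference is that where the paper simply cites \cite[Lemma 2.9]{shi_fully} for the invariance of $n(w)$, you supply a correct self-contained antichain-exchange argument in the heap (replacing the deleted minimal vertex $s$ by $t$), which is a harmless and valid substitute for the citation.
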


\begin{proof}
    By the definitions of irreducible FC elements it is immediate to compute that $n(w)=|\ld(w)|$ for $w\in \I(\D)$, that is:
 $$n(w)=\begin{cases}
        \ell(w), &\mbox{if }w\in \TTC(\D);\\
        2, &\mbox{if }w\in \zz(\D);\\
        n/2 +1, &\mbox{if }w\in \car(\D).
    \end{cases}$$
     By Proposition \ref{diagrammiasszigzag}, if $w\in \zz(\D)$ then $\ab(D_w)=1$. By Lemma \ref{discesearchi}, if $w\in \car(\D)$, then $D_w$ has $n/2+1$ non-propagating edges. If $w\in \TTC(\D)$, by Corollary \ref{corollary:archidiscese}, $\ab(D_w)=\ell(w)-f_\bullet(w)-f_\circ(w)$. Hence, if $w\in \I(\D)$, $n(w)=\at(D_w)$ by Proposition \ref{lemmadelta}. Moreover, by \cite[Lemma 2.9]{shi_fully}, $n(w)$ is invariant by star reducibility. On the other hand, by Remarks \ref{rem:occirr}(b) and \ref{remark:a-value}, and by Proposition \ref{lemmadelta}, $\ab(D_w)$, $\# \bO$ and $\# \wO$ are invariant by star reducibility as well. 
     Hence $n(w)=\at(D_w)$ for any $w\in \fc(\D)$. 
\end{proof}
\bibliographystyle{plain}
\bibliography{biblio}

\end{document}